\newtheorem{thm}{Theorem}[subsection]
\newtheorem{lem}[thm]{Lemma}
\newtheorem{cor}[thm]{Corollary}
\newtheorem{prop}[thm]{Proposition}
\newtheorem{conj}[thm]{Conjecture}
\theoremstyle{definition}
\newtheorem{defn}[thm]{Definition}
\newtheorem{eg}[thm]{Example}
\theoremstyle{remark}
\newtheorem{rem}[thm]{Remark}
\numberwithin{equation}{section}
\def\vs#1{\vskip .#1 cm} 
\def\xrarrow{\xrightarrow} 
\def\then{\Rightarrow}
\def\into{\hookrightarrow}
\def\delete{\backslash}
\def\<{\left<}
\def\>{\right>}
\DeclareMathOperator{\colim}{colim}%
\DeclareMathOperator{\Emb}{Emb}
\DeclareMathOperator{\Map}{Map}
\newcommand{\sd}{SD}
\newcommand{\sdgo}[2]{\sd^{G/O}_{#1,#2}}
\newcommand{\ovsdgo}[2]{\ov\sd^{G/O}_{#1,#2}}
\newcommand{\Gamsub}[2]{\Gam_{#1,#2}}
\newcommand{\normch}{\widetilde{ch}}
\newcommand{\utd}[1]{\widetilde\cS^\d_{#1}} 
\newcommand{\std}[2]{\widetilde\cS^s_{#1,#2}} 
\newcommand{\stdone}[1]{\widetilde\cS^{s}_{#1}}
\newcommand{\DWW}{\text{\rm DWW}}
\newcommand{\IK}{\text{\rm IK}}
\newcommand{\BL}{\text{\rm BL}}
\newcommand{\field}[1]{\mathbb{#1}}
\newcommand{\ZZ}{\ensuremath{{\field{Z}}}}
\newcommand{\CC}{\ensuremath{{\field{C}}}}
\newcommand{\RR}{\ensuremath{{\field{R}}}}
\newcommand{\QQ}{\ensuremath{{\field{Q}}}}
\newcommand{\commentout}[1]{}
\newcommand{\vertical}{{\text{\sf v}}}
\def\vv{^\vertical\!}
\def\ll{\lambda}
\newcommand{\cS}{\ensuremath{{\mathcal{S}}}}
\def\b{\beta}
\def\g{\gamma}
\def\Gam{\Gamma}
\def\d{\partial}
\def\e{\epsilon}
\def\r{\rho}
\def\s{\sigma}
\def\Sig{\Sigma}
\def\t{\tau}
\def\th{\theta}
\def\z{\zeta}
\def\ov{\overline}
\def\ul{\underline}
\def\wt{\widetilde}
\title{Exotic smooth structures on topological fibre bundles II}
\author{Sebastian Goette}
\address{Mathematisches Institut, Universit\"at Freiburg, Eckerstr. 1, 79104 Freiburg,
Germany}
\email{sebastian.goette@math.uni-freiburg.de}
\author{Kiyoshi Igusa}
\address{Department of Mathematics, Brandeis University, Waltham, MA 02454}
\email{igusa@brandeis.edu}
\subjclass[2000]{Primary 57R22; Secondary 57R10, 57Q10}
\begin{document}

\begin{abstract} We use a variation of a classical construction of A. Hatcher to construct virtually all stable exotic smooth structures on compact smooth manifold bundles whose fibers have sufficiently large odd dimension (at least twice the base dimension plus 3). Using a variation of the Dwyer-Weiss-Williams smoothing theory which we explain in a separate joint paper with Bruce Williams \cite{Second}, we associate a homology class in the total space of the bundle to each exotic smooth structure and we show that the image of this class in the homology of the base is the Poincar\'e dual of the relative higher Igusa-Klein (\IK) torsion invariant. This answers the question, in the relative case, of which cohomology classes can occur as relative higher torsion classes.
\end{abstract}

\maketitle

\tableofcontents

\setcounter{page}{1}
 
 %
 %

%

 %
 %

\section*{Introduction and outline} 


Higher analogues of Reidemeister torsion and Ray-Singer analytic torsion were developed by J. Wagoner, J.R. Klein, M. Bismut, J. Lott, W. Dwyer, M. Weiss, E.B. Williams, W. Dorabiala, B. Badzioch, the authors of this paper and many others. (\cite{Wagoner:higher-torsion}, \cite {Klein:thesis}, \cite{IK1:Borel2}, \cite{Bismut-Lott95}, \cite{DWW}, \cite{BG2}, \cite{Goette01}, \cite{Goette03},\cite{Goette08}, \cite{I:BookOne}, \cite{BDW09}, \cite{BDKW}). 

There are three different definitions of the higher torsion due to Igusa-Klein \cite {IK1:Borel2}, \cite{I:BookOne}, Dwyer-Weiss-Williams \cite{DWW} and Bismut-Lott \cite {Bismut-Lott95}, \cite{BG2} which are now known to be related in a precise way \cite{BDKW}, \cite{Goette08}, \cite{I:Axioms0}. In this paper we use the Igusa-Klein (\IK) torsion as formulated axiomatically in \cite{I:Axioms0} (See the review of higher torsion and it basic properties in Section \ref{subsecA13}.) The results can be translated into results for the other higher torsion invariants using the formulas relating the Dwyer-Weiss-Williams (\DWW) smooth torsion, the nonequivariant Bismut-Lott (\BL) analytic torsion and the \IK-torsion. (See \cite{BDKW}, \cite{Goette08}.)

Higher Reidemeister torsion invariants are cohomology classes in the base of certain smooth manifold bundles which can sometimes be used to distinguish between different smooth structures on the same topological manifold bundle. The main purpose of this work is to determine which cohomology classes occur as higher Reidemeister torsion invariants of exotic smooth structures on the same topological manifold bundle. We also determine to what extent the higher torsion distinguishes between different smooth structures on the same bundle.

Since the higher torsion is a sequence of real cohomology classes which are ``stable'', it can only detect the torsion-free part of the group of stable smooth structures on topological bundles. Following Dwyer, Weiss and Williams we eschew classical smoothing theory by assuming that we are given a fixed linearization (vector bundle structure) on the vertical tangent microbundle of a topological manifold bundle. We also assume that there exists at least one smoothing. With these points in mind, we give a complete answer to these two questions in the relative. Also, in the process, we give an explicit construction of ``virtually all'' exotic smooth structures on smooth manifold bundles with closed fibers of sufficiently large odd dimension.


\subsection{Statement of results} 

Suppose that $p:M\to B$ is a smooth bundle with fiber $X$. This means that $X,M,B$ are compact smooth manifolds and $B$ is covered by open sets $U$ so that $p^{-1}(U)$ is diffeomorphic to $U\times X$. We always assume that $B,X$ and $M$ are oriented since we need to use Poincar\'e duality. For the purpose of this introduction, we also assume that $X,B$ and $M$ are closed manifolds although we also need to consider disk bundles over $M$.

Let $T\vv M$ be the \emph{vertical tangent bundle} of $M$, i.e. the kernel of the map of tangent bundles $Tp:TM\to TB$ induced by $p$. By an \emph{exotic smooth structure} on $M$ we mean another smooth bundle $M'\to B$ together with a fiberwise tangential homeomorphism $f:M\cong M'$. This in turn means that $f$ is a homeomorphism over $B$ and that $f$ is covered by an isomorphism of vector bundles $T\vv f:T\vv M\cong T\vv M'$ which is compatible with the topological derivative of the homeomorphism $f$. (See \cite{Second}, subsection 1.3.3.) 

There are two invariants that we can associate to an exotic smooth structure $M'$ on $M$. One is the higher relative \IK-torsion invariant (Section \ref{subsecA13})
\[
	\t^\IK(M',M)\in \bigoplus_{k>0} H^{4k}(B;\RR)
\]
and the other is the \emph{relative smooth structure class} $
	\Theta(M',M)\in H_{\ast}(M;\RR)
$
which is a more complete invariant given by the following theorem which is a reinterpretation of the results of \cite{DWW} as explained in \cite{Second} and \cite{WilliamsNotes06}.

\begin{thm}\cite{Second}\label{main result of GIW}
Let $\stdone{B}(M)$ be the direct limit over all linear disk bundles $D(M)$ over $M$ of the space of all exotic smooth structures on $D(M)$.
Then $\pi_0\stdone{B}(M)$ is a finitely generated abelian group and
\[
	\pi_0\stdone{B}(M)\otimes\RR \cong \bigoplus_{k>0} H_{\dim B-4k}(M;\RR)
\]
\end{thm}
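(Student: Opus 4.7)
The plan is to combine the Dwyer--Weiss--Williams parameterized stable smoothing theorem (as developed in the companion paper \cite{Second}) with the rational computation of the smooth Whitehead spectrum coming from Waldhausen's $A$-theory and Borel's calculation of $K_\ast(\ZZ)\otimes\RR$.

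First, I would appeal to the DWW-type classification proved in \cite{Second}. For a topological bundle with a fixed vertical linearization admitting at least one smoothing, this identifies $\stdone{B}(M)$ with $\pi_0$ of a space of sections over $B$ of a bundle of spectra whose fiber over $b \in B$ is the stable smooth Whitehead spectrum $\Wh^{\mathrm{Diff}}(X_b)$ of the fiber $X_b$. The stabilization implicit in the direct limit over all linear disk bundles $D(M)\to M$ is precisely what is needed to reduce from an unstable smoothing problem to this $A$-theoretic classifying object. Finite generation of $\pi_0 \stdone{B}(M)$ then follows from a straightforward Atiyah--Hirzebruch argument, using that $B$ is compact and that $\pi_n \Wh^{\mathrm{Diff}}(X_b)$ is finitely generated in each degree (classical, going back to Waldhausen).

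Next I would rationalize. By Waldhausen's analysis of $A(Y)$ and Borel's computation that $K_n(\ZZ)\otimes\RR=\RR$ only in degrees $n=0$ and $n=4k+1$ for $k\geq 1$, the rational homotopy of $\Wh^{\mathrm{Diff}}(\mathrm{pt})$, after the loop-space shift relating $A$-theory to the smoothing spectrum, is $\RR$ exactly in degrees $4k$ for $k\geq 1$. The rational assembly map $Y_+\wedge \Wh^{\mathrm{Diff}}(\mathrm{pt})\to \Wh^{\mathrm{Diff}}(Y)$ is a rational equivalence on finite complexes. Applying this fiberwise to the classifying bundle of spectra, the rational $\pi_0$ of the section space is computed by the Leray--Serre spectral sequence for $p\colon M\to B$ with coefficients in $\bigvee_{k>0}\Sigma^{4k}H\RR$, which degenerates rationally to give
\[
\pi_0 \stdone{B}(M)\otimes\RR \;\cong\; \bigoplus_{k>0} H^{\dim X+4k}(M;\RR).
\]

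Finally, Poincar\'e duality on the oriented closed manifold $M$ of dimension $\dim B+\dim X$ converts $H^{\dim X+4k}(M;\RR)\cong H_{\dim B-4k}(M;\RR)$, yielding the stated formula. The main technical obstacle is the first step: the DWW-style parameterized smoothing theorem, with the careful handling of vertical linearizations, fiberwise tangential homeomorphisms, and the disk-bundle stabilization, is the heart of the proof and is precisely what \cite{Second} is dedicated to establishing. Given that input, the rational calculation reduces to standard Leray--Serre machinery combined with Borel's theorem and Poincar\'e duality.
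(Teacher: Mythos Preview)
The paper does not actually prove this theorem; it is stated with the citation \cite{Second} and imported wholesale from the companion paper of Goette, Igusa and Williams. There is no proof in the present paper to compare against, so your proposal cannot be checked line by line against ``the paper's own proof.''

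That said, your outline is entirely consistent with what the present paper indicates about the contents of \cite{Second}. The paper describes \cite{Second} as developing ``a variation of the Dwyer--Weiss--Williams smoothing theory,'' and in the proof of Lemma~\ref{first version of AdT Lemma} it invokes exactly the rational homotopy computation you sketch (section space of a bundle with fiber rationally equivalent to $\prod_{k>0}K(\ZZ,4k)$, reduced to ``the basic homotopy calculation (Corollary 2.2.2 of \cite{Second})''). Your identification of the ingredients --- the DWW-style section-space description after disk-bundle stabilization, Waldhausen's $A$-theory, Borel's computation of $K_\ast(\ZZ)\otimes\RR$, and Poincar\'e duality --- matches the standard architecture for such results and aligns with the hints given here. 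You also correctly flag that the substantive work (the parameterized smoothing theorem with tangential data) is deferred to \cite{Second}, which is exactly the role that paper plays in this series.
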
 

In particular, any exotic smooth structure $M'\to B$ on $M$ gives an element of $\stdone{B}(M)$ and the corresponding element in the homology of $M$ is called the \emph{smooth structure class} of $M'$ relative to $M$ and will be denoted
\[
	\Theta(M',M)\in H_{q-4\bullet}(M):= \bigoplus_{k>0} H_{\dim B-4k}(M;\RR)
\]
We also use the indicated shortcut. The spot $\bullet$ will denote direct sum over $k>0$ as indicated. Coefficients will be in $\RR$ unless otherwise stated. We also aways denote the dimension of $B$ by $q$. 

The first main theorem of this paper is the following formula relating these invariants.

\begin{thm}[Theorem \ref{second main theorem}, Corollary \ref{second main theorem: even case}]\label{intro: key result}
\[
	D\t^\IK(M',M)=p_\ast\Theta(M',M)\in H_{q-4\bullet}(B)
\]
where $D:H^{4\bullet}(B)\cong H_{q-4\bullet}(B)$ is Poincar\'e duality and\[
	p_\ast:H_{q-4\bullet}(M)\to H_{q-4\bullet}(B)
\]
is the map in homology induced by $p:M\to B$. 
\end{thm}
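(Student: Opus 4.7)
The plan is to use Theorem \ref{main result of GIW} to reduce the identity to a finite set of generators, then verify it on explicit exotic smooth structures produced by a Hatcher-style construction that realizes those generators geometrically.

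First I would check that both sides of the asserted equation define homomorphisms $\pi_0\stdone{B}(M)\otimes\RR \to H_{q-4\bullet}(B;\RR)$. For $\Theta$ this is built into Theorem \ref{main result of GIW}; for $\t^\IK$ it follows from the additivity axiom in the characterization of \IK-torsion of \cite{I:Axioms0}. Using the isomorphism $\pi_0\stdone{B}(M)\otimes\RR \cong \bigoplus_{k>0}H_{q-4k}(M;\RR)$, the proof reduces to checking the identity on generators of each $H_{q-4k}(M;\RR)$.

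Next I would invoke the Hatcher-style family of exotic smooth structures promised in the introduction. For each oriented smooth codimension-$4k$ submanifold $N\hookrightarrow M$ (which rationally represents an arbitrary class in $H_{q-4k}(M;\RR)$ by a Thom-type argument) one glues in a one-parameter family of exotic disk bundles along $N$ to obtain an exotic smooth bundle $M'_N\to B$ with
\[
	\Theta(M'_N,M)=c_k[N]\in H_{q-4k}(M;\RR)
\]
for some universal nonzero constant $c_k$. Pushing forward along $p$ is then immediate: $p_\ast\Theta(M'_N,M)=c_k(p|_N)_\ast[N]$.

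The main work — and the main obstacle — is to compute $\t^\IK(M'_N,M)$ for these Hatcher bundles and verify
\[
	D\t^\IK(M'_N,M)=c_k(p|_N)_\ast[N].
\]
Because the Hatcher construction modifies the smooth structure only in a tubular neighborhood of $N$, the additivity and transfer axioms for \IK-torsion let one localize the relative torsion to $N$ and express it as a fiber integral along $p|_N:N\to B$ of the higher torsion of the universal one-handle Hatcher model. That universal torsion is a known nonzero multiple of the degree-$4k$ Borel class, and push–pull identifies the resulting fiber integral with $D^{-1}(p|_N)_\ast[N]$ up to a constant that depends only on $k$. The remaining delicate point is to check that this local torsion constant is exactly the $c_k$ appearing on the smooth-structure side; once that matching is in place, linearity propagates the equality over all of $\pi_0\stdone{B}(M)\otimes\RR$.
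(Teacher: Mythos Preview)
Your outline matches the paper's strategy at the top level: verify the identity on a spanning set of exotic structures produced by Hatcher-type constructions, then extend by linearity. But two of your steps are genuine gaps rather than details.

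First, your generators are parametrized by $(q-4k)$-dimensional submanifolds $N\subset M$ (note: these are not codimension-$4k$ in $M$, since $\dim M=q+\dim X$), and you wave at a Thom argument to realize all of $H_{q-4\bullet}(M;\RR)$ this way. The paper does \emph{not} do this. Its Hatcher handles are parametrized by $q$-dimensional manifolds $L$ that \emph{immerse as codimension zero} into $B$ and embed in $M$, together with a map $\xi:L\to G/O$; the degree is encoded in $\normch(\xi)\in H^{4\bullet}(L)$, not in the dimension of the submanifold. Showing that these immersed Hatcher handles span is the hardest part of the paper: it goes through the Arc de Triomphe construction on stratified subsets $(\Sig,\psi)$ of $M$, the computation of the homotopy type of the space of fiberwise generalized Morse functions \cite{I:GMF} to see that such $(\Sig,\psi)$ hit everything rationally (Lemma~\ref{first version of AdT Lemma}), and a separate stratified deformation lemma (Lemma~\ref{stratified deformation lemma}) to convert each AdT configuration into an immersed Hatcher handle. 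Your Thom sketch does not obviously produce exotic smooth structures with the required control over both invariants.

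Second, and more seriously, you flag the matching of the constant $c_k$ as ``the remaining delicate point'' but give no mechanism for it. This is exactly where the argument lives. The paper's resolution is structural, not computational: the immersed Hatcher handle on $M$ is, by construction, the image under an embedding $D(\tilde\ll):E\hookrightarrow M$ of a Hatcher disk bundle over $L$, where $E$ is a linear disk bundle. The naturality of $\Theta$ under such embeddings (the stratified deformation theorem, Corollary~2.4.3 of \cite{Second}) and the \emph{normalization} $\Theta_E=D\circ\t^\IK$ on linear disk bundles (Proposition~2.2.4 of \cite{Second}) together force the constants to agree---there is nothing to compute. Without reducing to a disk bundle where $\Theta$ and $D\circ\t^\IK$ are \emph{defined} to coincide, you have no way to pin down $c_k$; the axioms of higher torsion alone only give it up to a scalar in each degree (cf.\ Theorem~\ref{uniqueness of exotic torsion}).
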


This theorem can be interpreted to mean that, up to elements of finite order, differences in stable smooth structure which are not detected by higher torsion invariants are classified by homology classes in the kernel of the mapping $p_\ast$.

Combining these theorems, we obtain the answer in the stable case to the question which motivated this project, namely which cohomology classes occur as higher torsion invariants: The union of all higher \IK-torsion invariants of exotic smooth structures on all linear disk bundles $D(M)$ over $M$ span the Poincar\'e dual of the image of $p_\ast$. The answer in the unstable case is given by the following result which is a reformulation of Corollary \ref{third main theorem}.

\begin{thm}
Let $p:M\to B$ be a smooth manifold bundle whose base $B$, fiber $X$ and total space $M$ are closed oriented smooth manifold. Suppose that $\dim X$ is odd and at least $2\dim B+3$. Let $\b\in H^{4\bullet}(B)$ be a real cohomology class whose Poincar\'e dual is the image of an integral homology class in $M$. Then there exists another smooth bundle $p':M'\to B$ which is fiberwise tangentially homeomorphic to $p$ so that the relative torsion $\t^\IK(M',M)$ is a nonzero multiple of $\b$.
\end{thm}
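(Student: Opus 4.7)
The plan is to combine Theorem~\ref{main result of GIW} (stable realization of smooth structures) with Theorem~\ref{intro: key result} (the identity $D\tau^{\IK}=p_*\Theta$) and then destabilize using a Hatcher-style construction in the fibers of $p:M\to B$. Given $\beta\in H^{4\bullet}(B)$, write $D\beta\in H_{q-4\bullet}(B)$ for its Poincar\'e dual. By hypothesis there is an integral class whose image $\alpha\in H_{q-4\bullet}(M;\RR)$ satisfies $p_*\alpha=D\beta$. Theorem~\ref{main result of GIW} asserts that $\pi_0\stdone{B}(M)$ is finitely generated and that $\Theta(-,M)\otimes\RR$ identifies it with $\bigoplus_{k>0}H_{q-4k}(M;\RR)$, so the set of smooth structure classes it realizes is a full-rank lattice in that vector space. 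Since $\alpha$ itself lies in a lattice (coming from $H_{q-4\bullet}(M;\ZZ)$), some nonzero integer multiple $N\alpha$ is realized as $\Theta(M_0,M)$ for an exotic smooth structure $M_0$ on some linear disk bundle over $M$.

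The substantive content of the theorem, and the hardest step, is to realize $N\alpha$ \emph{unstably}: by a bundle $p':M'\to B$ whose fiber is homeomorphic to $X$ itself, rather than to $X\times D^k$. This is exactly where the hypotheses ``$\dim X$ odd'' and ``$\dim X\ge 2\dim B+3$'' enter. In the body of the paper the unstable realization is produced by the variation on Hatcher's construction announced in the abstract: one embeds Hatcher handles fiberwise in $p$, and the inequality $\dim X\ge 2\dim B+3$ gives enough ambient dimension inside each fiber to carry out the construction and splice the handles together coherently over $B$, while the parity hypothesis reflects the fact that Hatcher's construction produces exotic smooth structures with nontrivial higher torsion only for odd-dimensional fibers. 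The resulting statement that every integral class of the indicated form is realized (up to a nonzero integer) is precisely Corollary~\ref{third main theorem}.

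Granted this unstable realization, Theorem~\ref{intro: key result} finishes the proof: for the chosen $M'$,
\[
	D\tau^{\IK}(M',M)\;=\;p_*\Theta(M',M)\;=\;N\,p_*\alpha\;=\;N\,D\beta,
\]
so $\tau^{\IK}(M',M)=N\beta$, a nonzero multiple of $\beta$ as required. Thus the main obstacle is concentrated in the Hatcher-handle construction that destabilizes $M_0$ to $M'$; once that is in hand, the identification of higher torsion with $p_*\Theta$ and the stable realization theorem combine to give the result by a formal computation.
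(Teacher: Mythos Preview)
Your overall plan---realize the right homology class by an unstable exotic smooth structure on $M$, then apply $D\tau^{\IK}=p_*\Theta$---matches the paper's. But there is a circularity and an unnecessary detour.

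The circularity: you invoke Corollary~\ref{third main theorem} for the unstable realization step. That corollary is \emph{about} the values of $\tau^{\IK}(M',M)$, not about $\Theta$; in fact the paper explicitly says the theorem you are proving ``is a reformulation of Corollary~\ref{third main theorem}.'' So citing it here assumes what you want to show. The ingredient you actually need is the AdT Theorem (Theorem~\ref{intro: AdT Thm}, proved as Theorem~\ref{AdT lemma}): \emph{when $\dim X\ge 2\dim B+3$ is odd, the Arc de Triomphe construction already produces, on $M$ itself with no stabilization, exotic smooth structures $M'$ whose classes $\Theta(M',M)$ span $H_{q-4\bullet}(M)$.} That is the statement the Hatcher-handle machinery is built to establish, and the paper deduces the present theorem directly from it together with Theorem~\ref{intro: key result}.

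The detour: once you have the AdT Theorem, your first paragraph---passing through Theorem~\ref{main result of GIW} to get a stable $M_0$ on some disk bundle and then seeking to ``destabilize $M_0$ to $M'$''---is superfluous, and in fact there is no general destabilization procedure that takes an arbitrary exotic structure on $D(M)$ and compresses it down to one on $M$. The AdT construction does not arise by destabilizing a previously found stable structure; it is built unstably from the start (attaching positive and negative Hatcher handles to $M\times I$ along stratified subsets of $M$), and the dimension hypothesis $\dim X\ge 2\dim B+3$ is exactly what makes that fiberwise handle attachment possible. So drop the stable step, replace the appeal to Corollary~\ref{third main theorem} by an appeal to Theorem~\ref{intro: AdT Thm}, and your final displayed computation goes through verbatim.
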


The construction which produces these exotic smooth structures is a variation of the classical construction of Hatcher. We call our version the ``Arc de Triomphe'' (AdT) construction due to its appearance (Figure \ref{AdT figure}). The theorem above is therefore a consequence of Theorem \ref{intro: key result} and the following theorem.

\begin{thm}[AdT Theorem \ref {AdT lemma}]\label{intro: AdT Thm} When $\dim X\ge 2\dim B+3$ is odd, the relative smooth structure classes of the smooth bundles $M'\to B$ given by the AdT construction span the vector space $H_{q-4\bullet}(M)$.
\end{thm}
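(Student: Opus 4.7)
The plan is to realize, for every $\alpha \in H_{q-4k}(M;\RR)$ with $k \ge 1$, an AdT bundle $p': M' \to B$ whose relative smooth structure class $\Theta(M',M)$ is a nonzero multiple of $\alpha$. By Theorem \ref{main result of GIW} the smooth structure class becomes an isomorphism after tensoring with $\RR$,
\[
\Theta: \pi_0 \stdone{B}(M) \otimes \RR \xrightarrow{\cong} \bigoplus_{k>0} H_{q-4k}(M;\RR),
\]
so spanning the right-hand side by AdT constructions is exactly what must be shown.

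First I would reduce to the case of classes represented by embedded submanifolds. By Thom's theorem every class in $H_{q-4k}(M;\QQ)$ is, after multiplying by a suitable positive integer, of the form $\phi_*[N]$ for a closed oriented smooth submanifold $\phi : N \hookrightarrow M$ of dimension $q-4k$, and such classes span $H_{q-4\bullet}(M;\RR)$. A general position argument using the hypothesis $\dim X \ge 2\dim B + 3$ lets us arrange $N$ so that $p|_N : N \to B$ is an immersion in general position, and so that its normal bundle in $M$ (of rank $\dim X + 4k$) has enough room within each fiber of $p$ to accommodate the AdT local model. Odd-dimensionality of $\dim X$ is needed so that Hatcher's involution produces a genuine smooth bundle rather than merely a block bundle.

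Next, over a tubular neighborhood of $N$ I would perform the AdT construction fiberwise to obtain a smooth bundle $M'_N \to B$ together with a canonical fiberwise tangential homeomorphism $M \cong M'_N$ that is the identity outside this neighborhood.

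The main obstacle is to verify the formula
\[
\Theta(M'_N, M) = c_k \cdot \phi_*[N] \in H_{q-4k}(M;\RR)
\]
for a nonzero universal constant $c_k$ depending only on $k$. My strategy is to use the naturality of the DWW smoothing theory under the embedding $\phi$ to reduce to a single universal AdT piece, and then to pin down $c_k$ via Theorem \ref{intro: key result}: applying $p_*$ gives $p_*\Theta(M'_N, M) = c_k (p \circ \phi)_*[N]$, which must equal $D\t^{\IK}(M'_N, M)$; comparison with the classical Hatcher example, where the higher \IK{}-torsion is a known nonzero multiple of the standard generator, forces $c_k \ne 0$. Once this identity is established, the submanifold-representability reduction completes the proof that AdT constructions span $H_{q-4\bullet}(M)$.
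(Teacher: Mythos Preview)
Your proposal has two genuine gaps, one structural and one circular.

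\textbf{Misreading of the AdT input data.} The Arc de Triomphe construction is not fed a $(q-4k)$--dimensional submanifold $N\subset M$. Its input is a \emph{$q$--dimensional} stratified subset $\Sigma\subset M$ (so $\dim\Sigma=\dim B$) together with a coefficient map $\psi:\Sigma\to G/O$; see Definition~\ref{stratified set} and the construction in Section~\ref{subsecA31}. The class in $H_{q-4k}(M)$ that the construction produces is the push-forward of the Poincar\'e dual of the degree-$4k$ normalized Chern character of the bundle classified by $\psi$ (Lemma~\ref{Th AdT=2ch}). Thus the homology degree $q-4k$ comes from the \emph{coefficient map}, not from the dimension of the carrier. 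Your ``fiberwise AdT over a tubular neighborhood of $N$'' is not the construction defined in this paper, and you have not specified any $\psi$ that would make the output land on $\phi_\ast[N]$. The paper's surjectivity step (Lemma~\ref{first version of AdT Lemma}) does not use Thom representability at all; it uses the homotopy type of the space of fiberwise generalized Morse functions on $M\times I$ from \cite{I:GMF} to produce enough stratified subsets $(\Sigma,\xi)$ whose $D\normch$ images span $H_{q-4\bullet}(M)$, and then lifts the $BO$--coefficients to $G/O$ by passing to a multiple.

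\textbf{Circularity.} You propose to verify $\Theta(M'_N,M)=c_k\,\phi_\ast[N]$ by applying $p_\ast$ and invoking Theorem~\ref{intro: key result}. But in this paper Theorem~\ref{intro: key result} (which is Theorem~\ref{second main theorem}) is proved \emph{from} the AdT Theorem: its proof uses Proposition~\ref{main proposition}, which is exactly the spanning statement you are trying to establish. The logic runs AdT $\Rightarrow$ key result, not the other way around. Even setting the circularity aside, $p_\ast:H_{q-4k}(M)\to H_{q-4k}(B)$ need not be injective, so matching the image under $p_\ast$ cannot pin down the class in $H_{q-4k}(M)$, nor can it determine $c_k$ when $(p\circ\phi)_\ast[N]=0$.

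The paper's route is: (i) show $D\normch$ on stratified subsets is rationally surjective via GMF theory; (ii) show $\Theta\circ AdT=(-1)^n 2D\normch$ by reducing AdT to immersed Hatcher handles (Lemma~\ref{representation by immersed Hatcher}, Stratified Deformation Lemma~\ref{stratified deformation lemma}) and using the normalization of $\Theta$ on disk bundles from \cite{Second}. No appeal to Theorem~\ref{intro: key result} is made in that argument.
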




\subsection{Outline of the proofs}

{ 

The proofs of the results outlined above are interrelated and revolve around the proof of the key result Theorem \ref{intro: key result} which can be restated as follows. We consider two homology invariants for stable exotic smooth structures on smooth bundles $M\to B$. One is the Poincar\'e dual of the higher $\IK$-torsion and the other is the image of the smooth structure class $\Theta_M(M')$ in the homology of the base. Our theorem is that these invariant are equal. To prove this we note that these invariants are homomorphisms with a common domain, namely the group of isomorphism classes of stable exotic smooth structures on $M$, and common target, namely the direct sum of the real homology groups of $B$ in degrees $q-4k$ where $q=\dim B$. To prove that these homomorphisms are equal, we construct examples of exotic smooth structures for which we can calculate both invariants and show that the invariants agree. Then we show that our examples span the domain, tensored with $\RR$. By linearity, the invariants must agree everywhere.

The examples come from the theory of generalized Morse functions. We start with the main result of \cite{I:GMF} which we reformulate to say that the set of possible singular sets of fiberwise generalized Morse functions on $M$ produce a spanning set in the homology of $M$ in the correct degrees. These singular sets are examples of ``stratified subsets'' of $M$ with coefficients in $BO$. We can replace $BO$ with $G/O$ since they are rationally homotopy equivalent. Then we use the Arc de Triomphe construction which converts stratified subsets of $M$ with coefficients in $G/O$ into exotic smooth structures on $M$. Next we convert the Arc de Triomphe construction into an equivalent ``immersed Hatcher handle'' construction in order to compute its $\IK$-torsion.

The immersed Hatcher construction has the property that it is supported on an embedded disk bundle $E\subseteq M$. By the functorial properties of the smooth structure invariant $\Theta$ proved in \cite{Second}, the corresponding formula for higher torsion invariants proved in Theorem \ref{torsion of immersed Hatcher} below and the fact that the two invariants $p_\ast\circ\Theta$ and $D\circ\t^\IK$ agree on disk bundles, also proved in \cite{Second}, we conclude that they also agree on the immersed Hatcher construction. Therefore, these two invariants agree on the Arc de Triomphe construction and we only need to show that there are enough of these examples to generate a subgroup of the domain of finite index.

To prove this last statement we use Theorem \ref{main result of GIW} above which is derived from the \DWW-version of smoothing theory and proved in \cite{Second}. The latter result says that the group of isomorphism classes of stable exotic smooth structures on $M$, when tensored with $\RR$, is isomorphic to the direct sum of the real homology groups of the total space in degrees $\dim B-4k$. But these are exactly the homology groups spanned by elements coming from singular sets of fiberwise GMF's. This proves simultaneously the key result: $D\circ\t^\IK=p_\ast\circ\Theta$ and the AdT Theorem \ref{intro: AdT Thm}. The other theorems follow from these.
}

\subsubsection{Functorial properties of exotic smooth structures}

The key property that $D\circ \t^\IK=p_\ast\circ\Theta$ is known to hold for disk bundles and we use the functorial properties of these homology invariants to conclude that it holds for all smooth bundle. The functorial property of $\Theta$, as proved in \cite{Second}, is as follows (in the special case that $\d B$ is empty).

Suppose that $L$ is a compact smooth $q$-manifold with boundary where $q=\dim B$ and $\ll:L\to B$ is an immersion. Choose a lifting of $\ll$ to an embedding $\tilde\ll:L\to M$. Then, a fiberwise neighborhood of the image of $L$ in $M$ is a smooth disk bundle $\pi:E\to L$ with fiber $D^N$ where $N=\dim X$ is the dimension of the fiber of $p:M\to B$. The inclusion map $D(\tilde\ll):E\to M$ is a smooth embedding over $\ll$ in the sense that the following diagram commutes.
\[
\xymatrix{
E\ar[d]_\pi\ar[r]^{D(\tilde\ll)} &
	M\ar[d]^p\\
L \ar[r]^\ll&  B
	}
\]
The following naturality statement for the smooth structure class $\Theta$ is proved in \cite{Second}, Corollary 2.4.3.

\begin{thm}[stratified deformation theorem] The following diagram commutes where the vertical arrows are induced by the embedding $D(\tilde\ll):E\to M$ and immersion $\ll:L\to B$.
\[
\xymatrix{
	\pi_0\std{L}{\d L}(E)\ar[d]^{D(\tilde\ll)_\ast}\ar[r]^(.4)\Theta
	&
	H_{q-4\bullet}(E)\ar[d]^{D(\tilde\ll)_\ast}\ar[r]^{p_\ast}_\cong & H_{q-4\bullet}(L)\ar[d]^{\ll_\ast}\\ 
	\pi_0\stdone{B}(M) \ar[r]^(.4)\Theta
	&
	H_{q-4\bullet}(M)\ar[r]^{p_\ast}& H_{q-4\bullet}(B)
	}
\]
where $\std{L}{\d L}(E)$ is the space of stable exotic smooth structures on $E$ which agree with the given smooth structure of $E$ over $\d L$ and all homology groups have coefficients in $\RR$.
\end{thm}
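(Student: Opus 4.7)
The plan is to verify commutativity of the diagram by treating the two squares separately, with the left square being the substantive part. First I would construct the left vertical map $D(\tilde\ll)_\ast$ on smooth structures by ``extension by the identity'': given an exotic smooth structure $E'$ on $E$ agreeing with the standard structure in a neighborhood of the subbundle over $\d L$, glue $E'$ to $M\setminus \interior(E)$ with its original smooth structure to obtain an exotic smooth structure $M'$ on $M$. One checks this is well-defined on $\pi_0$ and compatible with the stabilization by linear disk bundles implicit in the definition of $\stdone{B}$, so that it descends to a homomorphism $\pi_0\std{L}{\d L}(E)\to \pi_0\stdone{B}(M)$.

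The right square commutes for elementary reasons. Since $\tilde\ll$ is a lift of $\ll$ over $p$, we have $p\circ D(\tilde\ll)=\ll\circ\pi$, so on homology $p_\ast\circ D(\tilde\ll)_\ast=\ll_\ast\circ\pi_\ast$. The isomorphism label on $\pi_\ast:H_{q-4\bullet}(E)\to H_{q-4\bullet}(L)$ is simply the homotopy equivalence induced by the disk bundle projection $\pi:E\to L$, so no further argument is needed there.

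The left square is the crux of the theorem. Via the identification $\pi_0\stdone{B}(M)\otimes\RR\cong H_{q-4\bullet}(M)$ from Theorem \ref{main result of GIW}, the invariant $\Theta$ is a manifestation of a spectrum-level characteristic class coming from \DWW{} smoothing theory. Because $M'$ differs from $M$ only inside $E$, the corresponding section of the smoothing spectrum is supported (up to homotopy) in the sub-bundle $E\subseteq M$, and naturality of the \DWW{} assembly map under inclusions of parametrized bundles forces $\Theta(M',M)$ to lie in the image of $D(\tilde\ll)_\ast:H_{q-4\bullet}(E)\to H_{q-4\bullet}(M)$ and, once identifications are unwound, to equal $D(\tilde\ll)_\ast\Theta(E',E)$.

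The main obstacle is this last naturality statement: it is not a manifold-theoretic manipulation but rather a functoriality property of the \DWW{} smoothing spectrum with respect to embeddings of disk bundles, which is exactly what the framework of \cite{Second} is designed to make precise. Once that framework is in place, the left square is a formal consequence of the naturality of assembly maps for parametrized spectra, and the full diagram commutes by composing the two squares.
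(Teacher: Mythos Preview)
Your outline is reasonable, but there is nothing to compare it against in this paper: the statement is quoted here as a result from the companion paper \cite{Second} (it is Corollary 2.4.3 there), and the present paper gives no proof of it. When the diagram is invoked later, in the proof of Theorem \ref{first main theorem}, the paper simply says ``The square commutes by Corollary 2.4.3 of \cite{Second}.''

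That said, your diagnosis of where the content lies is accurate. The right square is indeed trivial from $p\circ D(\tilde\ll)=\ll\circ\pi$. For the left square you correctly identify that the extension-by-the-identity construction defines $D(\tilde\ll)_\ast$ on stable smooth structures, and that the commutativity $\Theta_M\circ D(\tilde\ll)_\ast = D(\tilde\ll)_\ast\circ\Theta_E$ is a naturality statement for the \DWW\ smoothing-theoretic description of $\Theta$ under embeddings of subbundles. You are also right that this is not something one can extract from the present paper alone: the definition of $\Theta$ and the relevant functoriality are developed in \cite{Second}, and your proposal is essentially a summary of what that paper provides rather than an independent argument.
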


\subsubsection{Hatcher handles and Arc de Triomphe}


The AdT construction and the immersed Hatcher handle construction (Section 2) are generalizations of a classical construction of Hatcher (reviewed in Section 1) which produces exotic smooth structures on linear disk bundles. 

Just as a standard $n$-handle is given by attaching a disk bundle $D^n(\xi)\oplus D^m(\eta)$ to the top $M\times 1$ of $M\times I$ along a fiberwise embedding $S^{n-1}\oplus D^m(\eta)\to M$, Hatcher handles are given by attaching thickenings of Hatcher's disk bundle to the top $M\times 1$ of the product $M\times I$ along certain attaching maps given embeddings $\tilde\ll:L\to M$ which lie over codimension $0$ immersions $\ll:L\to B$. We call this the \emph{immersed Hatcher construction}. The reason that $\ll$ needs to be an immersion is because, following the proofs of Lemmas \ref{first version of AdT Lemma} and \ref{stratified deformation lemma}, we see that $L$ is constructed from a fiberwise generalized Morse function on $M$ and each Morse critical point of $f_b:M_b\to\RR$ of even index gives an element of $L$ mapping to $b$. Thus, we cannot expect $\ll$ to be an embedding.

For fixed $n$ and $m$ there are two kinds of Hatcher handles which we call ``negative'' and ``positive'' Hatcher handles. The attaching map for the negative Hatcher handle $A^{n,m}(\xi,\eta)$ can  be deformed to be on top of the positive Hatcher handle $B^{n,m}(\xi,\eta)$ in such a way that they cancel as shown in Figure \ref{AdT figure}. We call this the \emph{Arc de Triomphe} (AdT) construction. This construction has as input data a ``stratified subset'' of $M$. This is a pair $(\Sig,\psi)$ where $\Sig$ is a smooth oriented $q$-manifold embedded in $M$ with the property that the projection $\Sig\to B$ has only fold singularities (Definition \ref{stratified set}. The mapping $\psi:\Sig\to G/O$ gives the data for positive and negative Hatcher handles to be attached along $\Sig_+,\Sig_-$ which are the closures of the subsets of $\Sig$ along which the projection $p:\Sig\to B$ is orientation preserving or reversing, respectively and these handles are cancelled using the AdT construction along the fold set $\Sig_0=\Sig_-\cap \Sig_+$. We denote by $\sdgo{B}{\d_0}(M)$ the group of deformation classes of stratified subsets $(\Sig,\psi)$ of $M$. The Arc de Triomphe construction thus gives a map
\[
	AdT:\sdgo{B}{\d_0}(M)\to \pi_0\std{B}{\d_0}(M)
\]
which we show to be additive in Proposition \ref {sd is a group}.

One of the key results (Lemma \ref{first version of AdT Lemma}) is that this map is rationally surjective, i.e., its cokernel is finite. To prove this we use the computation of the homotopy type of the space of generalized Morse functions \cite{I:GMF} which implies that there is a fiberwise generalized Morse function $f:M\to I$ whose singular set $\Sig(f)$ together with a suitable multiple $\xi^n$ its vector bundle data $\xi$ given by the second derivative of $f$ gives an element of $\sdgo{B}{\d_0}(M)$ which maps onto a spanning subset of the real homology group
\[
	H_{q-4\bullet}(M,M_{\d_1B})\cong \pi_0\std{B}{\d_0B}(M)\otimes \RR
\]
In the following diagram, this is expressed by saying that the curved mapping $(-1)^n2D\normch$ from $\ovsdgo{B}{\d_0B}(M)$ to $H_{q-4\bullet}(M,M_{\d_1B})$ maps onto a spanning subset where $D\normch$ is the map (Subsection \ref{ss312}) which sends $(\Sig,\psi)\in \ovsdgo{B}{\d_0B}(M)$ to the image in $H_{q-4\bullet}(M)$ of the Poincar\'e dual of the normalized Chern character (Def. \ref{def: normalized Chern character}) of the bundle over $\Sig$ associated to $\psi$:
\[
	\normch(\Sig,\psi)=\sum_{k>0}(-1)^k\z(2k+1)\tfrac12ch_{4k}(\psi\otimes\CC)\in H^{4\bullet}(\Sig;\RR)
\]
where $\z(s)=\sum\frac1{n^s}$ is the Riemann zeta function.
\[
\xymatrix{
\sdgo{B}{\d_0}(M)\ar[rr]_{{AdT}}
\ar@/_2pc/[rrr]_{(-1)^n2D\normch}
&&  
\pi_0\std{B}{\d_0}(M)\ar[r]_(.4){\Theta}
&
 H_{q-4\bullet}(M,M_{\d_1B})
 } 
\]
We know from \cite{Second} that $\Theta$ is an isomorphism. So, it suffices to show that $\Theta\circ AdT=(-1)^n2D\normch$, i.e., that this diagram commutes. This is the statement of Lemma \ref{Th AdT=2ch}.

Finally, we come to the stratified deformation lemma \ref{stratified deformation lemma} which is used to prove that every AdT construction can be deformed into an immersed Hatcher construction. This crucial lemma allows us to compute the higher \IK-torsion invariant and show that they agree with the other invariant $D\circ p_\ast\circ \Theta$ on the same collection of exotic smooth structures as those given by the AdT construction. The main theorems then follow as we have outlined above.

\subsubsection{Stratified subsets} The Arc de Triomphe construction uses ``stratified subsets'' of the bundle $M$ with coefficients in $G/O$. In the case when $M$ is a closed manifold, these are defined to be closed oriented $q$-submanifolds $\Sig\subseteq M$, where $q=\dim B$, so that the restriction of the projection map $p:M\to B$ to $\Sig$ has only \emph{fold singularities}. These are points at which $p$ is given, in local coordinates, by
\[
	p(x_1,\cdots,x_q)=(x_1^2,x_2,x_3,\cdots,x_q)
\]
Then $\Sig$ becomes the union of two submanifolds $\Sig_+$ and $\Sig_-$ where $\Sig_+$, resp. $\Sig_-$, is the closure of the set of all points at which $p|\Sig$ is nonsingular and orientation preserving, resp. orientation reversing and the fold set is $\Sig_0=\Sig_+\cap \Sig_-$. The coefficients are given by a continuous mapping $\Sig\to G/O$. 

A \emph{fiberwise generalized Morse function} (GMF) on $M$ is a smooth map $f:M\to\RR$ with the property that $f$ has only Morse and birth-death singularities on the fibers. This is equivalent to saying that the vertical derivative of $f$, as a section of $T\vv M$, is transverse to the zero section and that the singular set is a stratified subset of $M$. Then $\Sig_+$, resp. $\Sig_-$, is the closure of the union of all Morse critical points of $f$ of even, resp. odd, index and $\Sig_0$ is the set of degenerate critical points of $f$. The coefficient map $\Sig\to BO$ is given by the Hessian of $f$ at each critical point. (See \cite{I:GMF}.) Using the fact that $G/O$ is rationally equivalent to $BO$, we can lift a nonzero multiple of the coefficient map to $G/O$. (Take the direct sum of the corresponding vector bundle over $\Sig$ with itself several times.)

The main theorem of \cite{I:GMF} is that the space of GMFs on a single manifold $X$ has the $\dim X$ homotopy type of $Q(BO\wedge X_+)$. Thus a fiberwise GMF is equivalent to a section of a bundle with that fiber and a standard homotopy theoretic argument proved in Corollary 2.2.2 of \cite{Second} implies that the corresponding stratified sets with coefficients lifted to $G/O$ will represent (a multiple of) any element of $ H_{q-4\bullet}(M)$. The AdT construction produces the exotic smooth structure corresponding to this homology class using this stratified set.


\subsection{Comparison to \DWW-torsion}

Our key result (Theorem \ref{intro: key result} above) can be interpreted as saying that the relative higher \IK-torsion is equal to the relative higher \DWW-torsion if we define the latter to be the Poincar\'e dual of the image of the relative smooth structure class in the homology of the base. This proposed definition agrees with the following recent theorem of Badzioch, Dorabiala, Klein and Williams \cite{BDKW} but the two results do not imply each other, even if the definitions were known to agree, since the absolute higher torsion (\DWW\ or \IK) is not always defined. 

\begin{thm}[Badzioch, Dorabiala, Klein and Williams]\label{Thm of BDKW}
Suppose that $M\to B$ is a smooth unipotent bundle (Definition \ref{defn:unipotent}). Then, for all $k>0$, the degree $4k$ smooth \DWW-torsion invariant of $M$ is proportional to the \IK-torsion:
\[
	\t_{2k}^{\DWW}(M)=\ll_k\t_{2k}^\IK(M)\in H^{4k}(B;\RR)
\]
for some nonzero real number $\ll_k$ depending only on $k$.
\end{thm}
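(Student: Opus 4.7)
The plan is to invoke the axiomatic characterization of higher \IK-torsion from \cite{I:Axioms0}, which asserts that any sequence of real cohomology classes $\tau_{2k}(M) \in H^{4k}(B;\RR)$ assigned to smooth unipotent bundles $M \to B$ and satisfying (i) additivity under vertical gluings and (ii) a transfer axiom for fiberwise oriented linear sphere bundles is uniquely determined, in each degree $4k$, up to a real scalar by its value on a single example on which \IK-torsion is nonzero. The strategy is therefore to verify that the smooth \DWW-torsion $\tau^{\DWW}_{2k}$ satisfies both axioms on the class of unipotent bundles; uniqueness then forces proportionality, and the constant $\lambda_k$ is pinned down by a single explicit computation.

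First I would confirm that $\tau^{\DWW}_{2k}(M) \in H^{4k}(B;\RR)$ is well-defined: the unipotency hypothesis guarantees that the rationalized Dwyer--Weiss--Williams assembly, composed with the Borel regulator $K(\ZZ)\to H\RR$, lands in untwisted real cohomology of the base. Additivity should then follow from Waldhausen's fibration theorem: a fiberwise decomposition $M = M_1 \cup_{M_0} M_2$ produces a homotopy pushout of parameterized $A$-theory spectra, and the Euler-characteristic-additivity built into the \DWW\ construction yields $\tau^{\DWW}_{2k}(M) = \tau^{\DWW}_{2k}(M_1) + \tau^{\DWW}_{2k}(M_2) - \tau^{\DWW}_{2k}(M_0)$ after rationalization.

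The main obstacle is the transfer axiom. For a smooth bundle $\pi : E \to M$ whose fiber is a closed manifold $F$, the \DWW-construction for $E \to B$ factors through the parameterized Becker--Gottlieb transfer of $\pi$, whose composition with the projection is multiplication by $\chi(F)$; this yields an expression for $\tau^{\DWW}_{2k}(E \to B)$ in terms of $\tau^{\DWW}_{2k}(M \to B)$ together with a correction term governed by the characteristic classes of the vertical tangent bundle of $\pi$. Identifying this correction with the one that appears in Igusa's transfer axiom, which is phrased in terms of the Euler and Pontryagin classes of an oriented sphere bundle, is the technical heart of the argument and requires a careful translation between the parameterized $A$-theory picture underlying \DWW\ and the generalized Morse function model underlying \IK. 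Once this translation is in place, the axiomatic uniqueness delivers $\tau^{\DWW}_{2k} = \lambda_k\,\tau^{\IK}_{2k}$, and one extracts $\lambda_k \neq 0$ by evaluating both invariants on a single test example, for instance a mapping torus of a diffeomorphism or a circle bundle, where classical Reidemeister/analytic torsion formulas pin down $\tau^{\IK}_{2k}$ and a direct \DWW-computation is tractable.
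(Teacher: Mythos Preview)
The paper does not give its own proof of this theorem; it is attributed to \cite{BDKW}. The Remark immediately following the statement does, however, sketch the strategy, and it is the same as yours: one shows that the \DWW-torsion (extended to unipotent bundles in \cite{BDW09}) satisfies Igusa's axioms from \cite{I:Axioms0}, and then the axiomatic uniqueness forces proportionality to $\tau^{\IK}$.

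There is one genuine gap in your outline. The uniqueness statement you invoke is not the one the paper uses. Additivity and transfer alone do \emph{not} determine a higher torsion theory up to a scalar in each degree; without a further constraint one can add any multiple of the Miller--Morita--Mumford type classes built from the vertical tangent bundle. The theorem actually applied (stated in the paper as Theorem~\ref{uniqueness of exotic torsion}) characterizes \emph{stable} higher torsion invariants, i.e.\ those satisfying $\tau(M)=\tau(D(\xi))$ for every oriented linear disk bundle $D(\xi)\to M$. Accordingly, the paper records the BDKW result in the sharper form of Theorem~\ref{Lem of BDKW}: the \DWW\ smooth torsion is a \emph{stable} higher torsion invariant. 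So your verification program must include stability, not just additivity and transfer; your discussion of the transfer axiom, while relevant, is not where the crux lies.

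Finally, neither the nonvanishing of $\lambda_k$ nor any explicit test computation is addressed in this paper; that belongs entirely to \cite{BDKW}.
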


\begin{rem}
Dwyer, Weiss and Williams originally defined their higher smooth torsion in the case where the action of $\pi_1 B$ on the homology of the fiber $X$ is trivial. This definition was later extended by Badzioch, Dorabiala and Williams \cite{BDW09} to the unipotent case where $H_\ast(X;\QQ)$ has a filtration by $\pi_1B$-submodules so that the associated graded module has a trivial action of $\pi_1B$. In \cite{BDKW} Badzioch, Dorabiala, Klein and Williams show that this extended theory satisfies the axioms for higher torsion given in \cite{I:Axioms0}. Since these axioms characterize exotic higher torsion invariants up to a scalar multiple, the formula $\t_{2k}^\DWW=\ll_k\t_{2k}^\IK$ above holds for all smooth unipotent bundles.

The relation between Theorem \ref{Thm of BDKW} and our second main theorem \ref{intro: key result} is very roughly as follows. Given two smooth structures $M,M'$ on the same unipotent bundle $M\to B$, the two difference torsion invariants are defined and equal to the difference between the absolute torsions of $M,M'$:
\[
	\t^\DWW(M',M)=\t^\DWW(M')-\t^\DWW(M)
\]
\[
	\t^{\IK}(M',M)=\t^{\IK}(M')-\t^{\IK}(M)
\]
This is proved in \cite{I:BookOne} for the case of $\t^\IK$ and should be fairly straightforward to prove in the case of $\t^\DWW$ if the relative \DWW-torsion is defined correctly. Therefore, Theorem \ref {Thm of BDKW} implies that these difference torsions are proportional in the case when the bundles are unipotent. 

Our Theorem \ref {intro: key result} could be interpreted as giving a conceptual definition of the higher \DWW-difference torsion in terms of \DWW-smoothing theory, showing that it is equal to the higher \IK-difference torsion defined using Morse theory. We believe that our version of \DWW-difference torsion (defined as the Poincar\'e dual of the image of $\Theta(M',M)$ in $H_{4\bullet}(B)$) is equal to $\t^\DWW(M')-\t^\DWW(M)$. By our Theorem \ref {second main theorem}, this would be equivalent to showing that the proportionality constant in the theorem of Badzioch, Dorabiala, Klein and Williams is equal to 1:
\[
	\ll_k=1
\]
so that $\t^\DWW=\t^\IK$! However, we do not attempt to prove this here.
\end{rem}

When the fibers are closed even dimensional manifolds, the theorem above still holds by Corollary \ref {second main theorem: even case}. However, the relative higher torsion class $\t^\IK(M',M)$ is equal to zero in that case:
\[
	\t^\IK(M',M)=\t^\IK(M')-\t^\IK(M)=0
\]
since $\t^\IK(M)$ depends only on the vertical tangent bundle of $M$ over $B$ by \cite{I:Axioms0} and $M'$ has the same vertical tangent bundle as $M$ by definition of tangential homeomorphism. This leads to the following conjecture.

\begin{conj}[Rigidity conjecture]
The stable smooth structure class vanishes when the fiber is a closed oriented even dimensional manifold:
\[
	\Theta(M',M)=0
\]
In other words, rationally stably, there are no exotic smooth structures on manifold bundles with closed oriented even dimensional fibers.
\end{conj}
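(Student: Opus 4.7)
The natural strategy is to reduce to the odd-dimensional fiber case via stabilization and then exploit the Morse-duality involution $f\mapsto-f$.

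Given an exotic smooth structure $M'\to B$ on $p:M\to B$ with closed oriented even-dimensional fiber $X$, first choose a linear disk bundle $D(\xi)\to M$ of odd rank $N$ with $N+\dim X\ge 2\dim B+3$, and form the stabilization $D(M')\to B$. By the stratified deformation theorem applied to the zero-section embedding $M\hookrightarrow D(M)$, the class $\Theta(M',M)$ corresponds to $\Theta(D(M'),D(M))$ under the isomorphism $H_{q-4\bullet}(M)\cong H_{q-4\bullet}(D(M))$, so it suffices to prove $\Theta(D(M'),D(M))=0$. Since the fiber $X\times D^N$ has odd dimension $\ge 2\dim B+3$, the AdT Theorem produces a stratified subset $(\Sig,\psi)$ of $D(M)$ representing $D(M')$ up to finite-order classes, with $\Theta(D(M'),D(M))=(-1)^n 2D\normch(\Sig,\psi)$.

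The key idea is to apply Morse duality $f\mapsto-f$ to a fiberwise GMF producing $(\Sig,\psi)$, after doubling the disk-bundle fiber $X\times D^N$ along its boundary to obtain the closed odd-dimensional fiber $X\times S^N$. On a closed odd-dimensional manifold, the involution sends a critical point of index $k$ to one of index $n-k$ (of opposite parity since $n$ is odd), hence swaps $\Sig_+\leftrightarrow\Sig_-$; simultaneously, the negative Hessian eigenbundle $\psi$ is replaced by its orthogonal complement $\psi'$, satisfying $\psi'+\psi=T\vv D(M)|_\Sig$ in $KO(\Sig)$. Thus stably $\psi'=-\psi$ modulo a correction given by the common vertical tangent bundle, and this correction cancels in the relative invariant because $M$ and $M'$ are fiberwise tangentially homeomorphic and hence share $T\vv D(M)=T\vv D(M')$. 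Additivity of $\normch$ then yields $\normch(\Sig,\psi')\equiv-\normch(\Sig,\psi)$ modulo the cancelling correction. Provided the involution $(\Sig,\psi)\mapsto(\Sig,\psi')$ sends the stable smooth structure of $AdT(\Sig,\psi)$ to itself---which is plausible since the positive and negative Hatcher handles are exchanged by the involution and cancel in pairs in the AdT construction---one obtains $\Theta(D(M'),D(M))=-\Theta(D(M'),D(M))$, and the conjecture follows.

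The main obstacle is precisely this last claim: that the Morse-duality involution preserves the stable smooth structure class of the AdT output. For the absolute $\IK$-torsion, the analogous invariance is built into the axiomatic characterization of \cite{I:Axioms0}, which forces dependence only on the vertical tangent bundle. No comparable axiomatic characterization is presently available for the smooth structure class $\Theta$; formulating one---probably by identifying the Poincar\'e duality involution on parametrized Waldhausen $K$-theory induced by doubling the fiber and showing it acts trivially on the $\DWW$ stable smoothing space of \cite{Second}---is what the proof of the rigidity conjecture really demands. One must also verify the sign bookkeeping involving the parity $(-1)^n$ and the stratum swap $\Sig_+\leftrightarrow\Sig_-$, and confirm that the doubled-fiber construction faithfully represents stable exotic smooth structures on the original disk bundle.
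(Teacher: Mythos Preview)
The statement you are attempting to prove is explicitly labeled a \emph{Conjecture} in the paper; the authors do not prove it. What the paper establishes is only the weaker fact that $p_\ast\Theta(M',M)=0$ in the closed even-dimensional fiber case, as a consequence of $D\tau^{\IK}(M',M)=p_\ast\Theta(M',M)$ together with the tangential invariance of higher torsion for even-dimensional closed fibers (Theorem \ref{even torsion is a tangential invariant}). The full vanishing of $\Theta(M',M)$ in $H_{q-4\bullet}(M)$ is left open, so there is no paper proof to compare your proposal against.

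As a sketch toward the conjecture, your Morse-duality idea is natural, but in addition to the gap you already flag there is a concrete problem in the bookkeeping. Swapping $\Sig_+\leftrightarrow\Sig_-$ reverses the orientation of $\Sig$, contributing a sign to the push-forward $D\normch$; combined with $\psi'=T\vv M|_\Sig-\psi$ in $KO(\Sig)$, one obtains
\[
D\normch(\Sig\text{ with swap},\psi')=-D\normch(\Sig,T\vv M|_\Sig)+D\normch(\Sig,\psi),
\]
not $-D\normch(\Sig,\psi)$. The correction $D\normch(\Sig,T\vv M|_\Sig)$ depends on the particular stratified set $\Sig\subset M$ and on $T\vv M$, and the fact that $T\vv M\cong T\vv M'$ does not by itself make this term vanish; you would need, for instance, that $T\vv M|_\Sig$ is stably trivial, which is not generally true. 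So even granting your unproved key claim that the involution fixes the stable smooth structure, the argument as written does not yield $\Theta=-\Theta$. A genuine proof would likely require a Poincar\'e-duality involution on the parametrized $A$-theory or smoothing space itself (as you suggest at the end), rather than working at the level of Morse data.
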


Theorem \ref{intro: key result} implies that $\Theta(M',M)$ must lie in the kernel of the map $p_\ast$ in the closed even dimensional fiber case since the higher relative torsion is zero in this case. The AdT construction shows that $M\times I$ admits exotic smooth structures if the fiber dimension of $M\times I\to B$ is sufficiently large and odd.

\subsection{Acknowledgments} 

Research for this project was supported by the DFG special programme ``Global Differential Geometry'' and the National Science Foundation. An earlier version of this work was presented at the 2006 Arbeitsgemeinshaft at Oberwolfach on ``Higher Torsion Invariants in Differential Topology and Algebraic K-Theory.'' This was a very helpful and enjoyable meeting at which Bruce Williams gave us his famous notes on smoothing theory \cite{WilliamsNotes06} which we have expanded into a joint paper \cite{Second} giving a thorough exposition of the background material used in this paper. The American Institute of Mathematics in Palo Alto helped us to finish this project by hosting a workshop on higher torsion in 2009. This was a very productive meeting for which the directors of AIM deserve a lot of credit for keeping us focused. The second author would also like to thank the organizers of the CMS meeting at Fredericton, New Brunswick in June, 2010 for the opportunity to present the first completed version of this paper. Finally, we would like to thank the referee for many helpful suggestions, both in exposition and content of this paper and the companion paper \cite{Second}. 



%
%

\section{Hatcher's example}

Hatcher's famous construction gives smooth disk bundles over $S^{4k}$ which are homeomorphic but not diffeomorphic to $S^{4k}\times D^n$. The exact statement is given below. 



\subsection{Homotopy theory}\label{subsecA11}

John Klein helped us to find the lowest dimension in which this part of the construction works.

Suppose that $B$ is a compact smooth $q$-manifold with $q$ and $\d B=\d_0B\cup \d_1B$ as before. Let 
\[
	f:B/\d_0B\to G/O
\]
be a continuous map, i.e., $f$ is a continuous mapping on $B$ which sends $\d_0B$ to the basepoint of $G/O$, the fiber of $BO\to BG$. This classifies a stable vector bundle over $B$ which is trivial over $\d_0B$ and trivial over $B$ as a spherical fibration. Take $n\ge q+1$. Then $BO_n\to BO$ is $q+1$-connected and therefore this stable vector bundle is given by a unique oriented $n$-plane bundle $\xi$ over $B$ which is trivial over $\d_0B$.

We will show that the sphere bundle $S^{n-1}(\xi)\to B$ of $\xi$ is fiber homotopically trivial. Since $G/O$ is simply connected, we may assume that $q\ge2$ and thus $n\ge3$.

\begin{rem}\label{rem:ch(xi) span H4k(B,d0)} Since $G/O$ is rationally homotopy equivalent to $BO$, the Chern characters of all real vector bundles $\xi$ obtained in this way will span the vector space 
\[
	\bigoplus_{0<k\le q/4}H^{4k}(B,\d_0B;\RR).
\]
\end{rem}

Recall that $G_n$ is the topological monoid of all unpointed self-homotopy equivalences of $S^{n-1}$. Taking unreduced suspension we get a mapping $G_n\to F_n$ where $F_n\subset \Omega^nS^n$ is the union of the degree $\pm1$ components. It follows from a theorem of Haefliger \cite{Hf} that $(F_n,G_n)$ is $2n-3$ connected ($2n-3\ge n\ge q+1$). Furthermore, the components of $\Omega^nS^n$ are all homotopy equivalent and $\pi_kBG_n=\pi_{k-1}G_n\cong\pi_{k-1} F_n$ is stable and thus finite for $k<n$. (This also follows from the EHP sequence.) Therefore, 
\[
[B/\d_0B,BG_n]\cong[B/\d_0B,BG]
\]
{for $n>q$. So, the composition}
\[
	B/\d_0B\xrarrow{\xi}BO_n\to BG_n
\]
is null homotopic for $n>q$. This implies that the sphere bundle $S^{n-1}(\xi)$ associated to $\xi$ is fiberwise homotopy equivalent to the trivial bundle:
\[
	g:S^{n-1}(\xi)\simeq S^{n-1}\times B
\]
and this trivialization agrees with the given trivialization over $\d_0B$.

Take the fiberwise mapping cone of $g$. This gives a fibration over $B$ whose fibers are contractible $n$-dimensional cell complexes which are homeomorphic to the standard $n$-disk over $\d_0B$. When we thicken this up we will get an exotic smooth structure on a trivial disk bundle over $B$.

\begin{rem}
For any space $X$ recall \cite{Adams, HusemollerFB} that $J(X)$ is the group of stable vector bundles over $X$ modulo the equivalence relation that $\xi\sim\eta$ if the sphere bundles over $\xi$ and $\eta$ are fiberwise homotopy equivalent. The group operation is fiberwise join which corresponds to direct sum of underlying bundles. If $\xi$ is any vector bundle over $X$ then $J(\xi)$ denotes its image in $J(X)$. If $X$ is a finite complex then it is well known that $J(X)$ is a finite group. (See, e.g. \cite{HusemollerFB}.) The above argument shows that if $J(\xi)$ is trivial in $J(B/\d_0B)$ and $\dim \xi>\dim B$ then the sphere bundle of $\xi$ is fiberwise homotopically trivial.
\end{rem}



\subsection{Thickening}\label{subsecA12}

We have a family of finite cell complexes over $B$ which we want to thicken to get a manifold bundle. If we embed this fibration in $D^N\times B$ and take a ``regular neighborhood'' we will get a smooth $N$ disk bundle over $B$ which is homeomorphic but not diffeomorphic to $D^N\times B$.

We start by thickening the trivial sphere bundle $S^{n-1}\times B$ to get $S^{n-1}\times I\times D^{m}\times B$. This is the trivial bundle over $B$ with fiber $S^{n-1}\times I \times D^{m}$. We also need this to be embedded in a trivial disk bundle $D^{n}\times D^m\times B$ in a standard way. We will take the obvious embedding
\[
	f:S^{n-1}\times I\times D^{m}\into D_2^n\times D^m
\]
given by $
	f(x,y,z)=\left(
	(1+y)x,z
	\right)
$ where $D^n_2$ is the $n$-disk of radius 2. Then the closure of the complement of the image of $f$ in $D^n_2\times D^m$ is $D^n\times D^m$. Note that $S^{n-1}\times0\times D^m$ is mapped into $S^{n-1}\times D^m$, the side of the ``donut hole''. We also need a fixed orientation preserving embedding $i:D^{n+m}\into S^{n-1}\times I\times D^m$ which we call the \emph{basepoint disk}. Assuming that $n\ge2$, $i$ is unique up to isotopy.

We attach an $n$-handle 
$
	D^n(\xi)\oplus D^m(\eta)
$
to this (with $\eta$ necessarily being a complementary bundle to $\xi$)  to fill in the donut hole and create a smooth (after rounding corners) bundle over $B$ with fiber 
\[
S^{n-1}\times I\times D^{m}\cup D^n\times D^m\cong D^{n+m}
\]
The data needed to attach such a handle embedded in $D_2^n\times D^m\times B$ is a smooth embedding of pairs
\[
	D(j):(D^{n}(\xi),S^{n-1}(\xi))\oplus D^m(\eta)\to (D^{n},S^{n-1})\times D^m\times B
\]

%
\begin{figure}[htbp]
\begin{center}
%
{
\setlength{\unitlength}{1cm}
{\mbox{
\begin{picture}(7,2.5)
\put(0,-1.5){
      \thicklines
      \put(0,2){\line(1,0){1.5}}
      \put(0,2){\line(0,1){1.5}}
      \put(0,3.5){\line(1,0){1.5}}
      \put(1.5,2){\line(0,1){1.5}}
      \put(5,2){\line(1,0){1.5}}
      \put(5,2){\line(0,1){1.5}}
     \put(5,3.5){\line(1,0){1.5}}
      \put(6.5,2){\line(0,1){1.5}}
	\thinlines
	\put(1.5,2){\line(1,0){3.5}}
	\put(1.5,3.5){\line(1,0){3.5}}
      \qbezier(1.5,2.4)(1.9,2.8)(3.25,2.4)
      \qbezier(3.25,2.4)(4.6,2)(5,2.4)
      \qbezier(1.5,3.1)(1.9,3.5)(3.25,3.1)
      \qbezier(3.25,3.1)(4.6,2.7)(5,3.1)
\thicklines
      \put(0,.35){
      \qbezier(1.5,2.4)(1.9,2.8)(3.25,2.4)
      \qbezier(3.25,2.4)(4.6,2)(5,2.4)
      }
\thinlines
      \put(5.8,2.75){\circle{.5}}
      \qbezier(6,3)(6.7,3.6)(7.2,3.5)
      \put(7.4,3.4){$i(D^{n+m})$}
      \put(3.1,3.7){$D^{n}$}
      \put(5.1,3.7){$S^{n-1}\times I$}
      \put(6.6,2.7){$D^m$}
      \put(2,1.4){$D(j)(D^n(\xi)\times 0)$}
      \put(3.4,1.8){\line(0,1){0.9}}
      \qbezier(3.4,2.7)(3.4,2.5)(3.3,2.5)
      \qbezier(3.4,2.7)(3.4,2.5)(3.5,2.5)
      }
\end{picture}}
}}
\end{center}
\end{figure}
This embedding $D(j)$ is essentially given by $j$ its restriction to the core $D^n(\xi)\times0$. 

\begin{lem}\label{embedding lemma}
If $m>n>q$ then there is a smooth fiberwise embedding of pairs:
\[
	j:(D^n(\xi),S^{n-1}(\xi))\to (D^n,S^{n-1})\times D^m\times B
\]
over $B$ which is the standard embedding over $\d_0B$ and which is transverse to $S^{n-1}\times D^m$. Furthermore, if $m\ge q+3$ then this fiberwise embedding will be unique up to fiberwise isotopy.
\end{lem}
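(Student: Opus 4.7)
The plan is to reduce the fiberwise embedding problem to a bundle-monomorphism problem and then solve the latter by classical obstruction theory, exploiting the high connectivity of the relevant Stiefel manifold together with the dimension bound on $B$.

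First, I would set up an equivalence between fiberwise embeddings of pairs and bundle monomorphisms. The idea is that a fiberwise embedding $j$ is, up to canonical deformation, determined by its vertical derivative along the zero section of $D^n(\xi)$, which is a bundle monomorphism $\phi\colon \xi \hookrightarrow \epsilon^{n+m}$ over $B$ agreeing with the standard inclusion $\epsilon^n \hookrightarrow \epsilon^n \oplus \epsilon^m$ over $\partial_0 B$. Conversely, given such a $\phi$, one gets a fiberwise embedding by setting $j(v) = (\phi(v), p(v))$ (after a mild rescaling so that the image lies in $D^n \times D^m$); the transversality of $S^{n-1}(\xi)$ with $S^{n-1}\times D^m$ is automatic because $\phi$ carries the unit sphere in $\xi_b$ to a sphere meeting $S^{n-1}\times D^m$ transversally in the radial direction. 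I would check that this passage is well defined up to a contractible choice and is compatible with the boundary conditions on $\partial_0 B$.

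Second, I would run the obstruction argument for existence. Bundle monomorphisms $\xi \hookrightarrow \epsilon^{n+m}$ rel $\partial_0 B$ are sections of a fiber bundle over $B$ with fiber the Stiefel manifold $V_{n+m,n}$ of $n$-frames in $\mathbb{R}^{n+m}$. Since $V_{n+m,n}$ is $(m{-}1)$-connected, the obstructions to extending the prescribed section on $\partial_0 B$ to all of $B$ lie in $H^{i+1}(B, \partial_0 B;\, \pi_i V_{n+m,n})$, and these vanish in the range $i \leq m-1$. The hypothesis $m > q = \dim B$ gives $i + 1 \leq q + 1 \leq m$ throughout, so every obstruction vanishes and the monomorphism, hence the embedding $j$, exists.

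Third, for uniqueness up to fiberwise isotopy when $m \geq q+3$, I would apply the same obstruction machinery one dimension up: homotopies between two such sections through sections rel $\partial_0 B$ have obstructions in $H^i(B,\partial_0 B;\, \pi_i V_{n+m,n})$, which again vanish by connectivity. The extra codimension past $q+1$ is exactly what is needed to upgrade a homotopy of bundle monomorphisms to an honest fiberwise ambient isotopy of embedded disk bundles, using a parametrized tubular-neighborhood and general-position argument in the high-codimension regime (a fiberwise version of Haefliger's isotopy theorem). The main obstacle I anticipate is the clean setup in Step 1: making the correspondence between embeddings of pairs (with the boundary transversality condition) and bundle monomorphisms sufficiently natural in families that a homotopy of monomorphisms really does lift to a fiberwise ambient isotopy, and verifying that the whole argument stays relative to the standard data over $\partial_0 B$.
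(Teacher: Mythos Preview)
Your reduction in Step~1 is where the argument breaks. A bundle monomorphism $\phi\colon \xi\hookrightarrow \epsilon^n\oplus\epsilon^m$ gives, on each fiber, a linear injection $\xi_b\hookrightarrow\RR^n\times\RR^m$, and the image of the unit sphere $S^{n-1}(\xi_b)$ is an ellipsoid in $\RR^{n+m}$. For this ellipsoid to lie in $S^{n-1}\times D^m=\{(x,y):|x|=1,\ |y|\le 1\}$ you would need the first component $\phi_1\colon\xi_b\to\RR^n$ to be an isometry, hence an isomorphism, for every $b$; that forces $\xi$ to be trivial, which is exactly what it is not. No rescaling fixes this, and the ``transversality in the radial direction'' remark does not address it: the problem is not transversality but that $j$ must be an embedding of \emph{pairs}, carrying $S^{n-1}(\xi)$ \emph{into} $S^{n-1}\times D^m\times B$.

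There is a diagnostic that confirms the gap: your obstruction argument uses only the connectivity of $V_{n+m,n}$ and never touches the hypothesis $J(\xi)=0$ established in Section~\ref{subsecA11}. But that hypothesis is essential. Restricting any embedding of pairs to the boundary and projecting to the $S^{n-1}$ factor yields a fiberwise degree-one map $S^{n-1}(\xi)\to S^{n-1}\times B$, i.e.\ a fiber homotopy trivialization of $S^{n-1}(\xi)$; so no such embedding can exist when $J(\xi)\neq 0$. The paper's proof goes the other way: it invokes a connectivity theorem (from \cite{I:Stability}) for the inclusion
\[
\Emb\bigl((D^n,S^{n-1}),(D^n\times D^m,\,S^{n-1}\times D^m)\bigr)\ \hookrightarrow\ \Map\bigl((D^n,S^{n-1}),(D^n\times D^m,\,S^{n-1}\times D^m)\bigr),
\]
computes the connectivity $c$ and checks $c\ge q$ (resp.\ $c>q$ when $m\ge q+3$). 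The required section of the mapping-space bundle is supplied precisely by the fiber homotopy equivalence $g\colon S^{n-1}(\xi)\simeq S^{n-1}\times B$ coming from $J(\xi)=0$. Your Stiefel-manifold model would be the right one for embeddings of $D^n(\xi)$ into a trivial $D^{n+m}$-bundle, but that is a different (and strictly easier) problem than the one stated.
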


\begin{proof} When $q=0$, this holds by transversality. So suppose $q>0$. We use \cite[Thm 6.5]{I:Stability} which says that the inclusion 
\[
	\Emb((D^n,S^{n-1}),(W^{n+m},\d_0W))\to \Map((D^n,S^{n-1}),(W^{n+m},\d_0W))
\]of the smooth embedding space into the mapping space is $c$-connected where
\[
	c=m-n-1+\min(s,n,m-2,n+m-4)
\]
and $s$ is the connectivity of the pair $(W,\d_0W)$. In our case $s=n-1$. So the condition $m>n>q>0$ implies that $c\ge q$ giving the existence part of the lemma and if $m\ge q+3$ then either $m\ge n+2$ or $n\ge q+2$ and we get $c>q$ which implies the uniqueness part.
\end{proof}

The embedding $j$ gives an $m$-dimensional normal bundle $\eta$ for $\xi$ and a smooth codimension 0 embedding
\[
	D(j):D^{n}(\xi)\oplus D^m(\eta)\to D^{n}\times D^m\times B
\]
Restricting this to $\d D^n(\xi)\oplus D^m(\eta)$ we get a fiberwise embedding
\[
	S(j):S^{n-1}(\xi)\oplus D^m(\eta)\to S^{n-1}\times D^m\times B
\]
We can use $S(j)$ to construct a smooth bundle (with corners rounded):
\[
	E^{n,m}(\xi)=D^n(\xi)\oplus D^m(\eta)\cup_{S(j)} S^{n-1}\times I\times D^{m}\times B.
\]
We can also use $D(j)$ to embed this in the trivial disk bundle of the same dimension:
\[
	F(j)= D(j)\cup f_B:E^{n,m}(\xi)\into D_2^n\times D^m\times B
\]
where $f_B=f\times id_B$. This is {\bf Hatcher's example}. Since $m>q$, the $m$-plane bundle $\eta$ is the stable complement to $\xi$ and is thus uniquely determined. If $m\ge q+3$ then, up to fiberwise diffeomorphism, $E(\xi)$ is independent of the choice of $j$. Finally, we note the crucial point that the bundle $E(\xi)$ is canonically diffeomorphic to the trivial bundle over $\d_0B$.


\subsection{Higher Reidemeister torsion}\label{subsecA13}

We will briefly review the definition and basic properties of higher Reidemeister torsion invariants following \cite{I:Axioms0}, in particular the handlebody formula Theorem \ref{higher torsion of fiberwise handlebody}. Then we will use these formulas to calculate the higher \IK-torsion for Hatcher's example. The analytic torsion of Hatcher's example is computed in \cite{Goette03} as an application of the handlebody formula for analytic torsion proved in that paper.

\begin{defn}\label{defn:unipotent}
A smooth bundle $M\to B$ with compact oriented manifold fiber $X$ and connected base $B$ is called \emph{unipotent} if the rational homology of the fiber $H_\ast(X;\QQ)$, considered as a $\pi_1B$-module has a filtration by submodules so that the action of $\pi_1B$ on the subquotients is trivial. For example, any oriented sphere bundle is unipotent. 
\end{defn}

\begin{defn}\label{defn:axiomatic higher torsion}
A {\bf higher torsion invariant} is a real characteristic class $\t(M)\in H^{4\bullet}(B;\RR)$ of smooth unipotent bundles $M\to B$ with closed fibers satisfying the following two axioms.

(\emph{Additivity}) Suppose that $M=M_0\cup M_1$ where $M_0,M_1$ are unipotent compact manifold subbundles of $M$ which meet along their fiberwise boundary $M_0\cap M_1=\d\vv M_0= \d\vv M_1$. Then
\[
	\t(M)=\frac12\t(DM_0)+\frac12\t(DM_1)
\]
where $DM_i$ is the \emph{fiberwise double} of $M_i$ (the union of two copies of $M_0$ along their fiberwise boundary).

(\emph{Transfer}) Suppose that $M\to B$ is a unipotent bundle with closed fibers and $S^n(\xi)\to M$ is the sphere bundle of an $SO(n+1)$-bundle $\xi$ over $M$. Then $S^n(\xi)$ is a unipotent bundle over both $M$ and $B$ and thus has two higher torsion invariants $\t_M,\t_B$. These are required to be related as follows.
\[
	\t_B(S^n(\xi))=\chi(S^n)\t(M)+tr^M_B(\t_M(S^n(\xi)))
\]
where $\chi$ is Euler characteristic and $tr^M_B:H^\ast(M)\to H^\ast(B)$ is the \emph{transfer}. (See \cite{I:Axioms0} for more details.)
\end{defn}

\begin{thm}\cite{I:Axioms0}\label{even torsion is a tangential invariant}
If $M\to B$ has closed even dimensional fibers then any higher torsion invariant $\t(M)$ depends only on the fiberwise tangential homeomorphism type of $M$. In other words, for any exotic smooth structure $M'$ for $M$, we have $\t(M')=\t(M)$.
\end{thm}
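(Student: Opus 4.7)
The plan is to use the transfer axiom to express $\tau(M)$ as a polynomial in topological characteristic classes of the vertical tangent bundle $T\vv M$, which is preserved up to isomorphism by any fiberwise tangential homeomorphism. Since real characteristic classes of topological vector bundles are topological invariants, this will yield $\tau(M)=\tau(M')$ whenever $M'$ is fiberwise tangentially homeomorphic to $M$.

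Let $N=\dim X$ (even), and pick a smooth rank $(2k+1)$ oriented vector bundle $\xi$ over $M$ whose underlying topological bundle is canonically determined by $T\vv M$; for concreteness take $\xi=T\vv M\oplus\epsilon^{2k+1-N}$ with $2k+1>N$. Applying the transfer axiom to the smooth sphere bundle $S^{2k}(\xi)\to M\to B$ and using $\chi(S^{2k})=2$ gives
\[
2\tau(M)=\tau_B\bigl(S^{2k}(\xi)\bigr)-tr^M_B\bigl(\tau_M\bigl(S^{2k}(\xi)\bigr)\bigr),
\]
so it suffices to show that each term on the right-hand side is a tangential invariant of $M$.

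For the term $\tau_M\bigl(S^{2k}(\xi)\bigr)$, the sphere bundle is classified by $\xi: M\to BSO(2k+1)$. Applying additivity (via the hemispheric decomposition $S^{2k}=D^{2k}_+\cup D^{2k}_-$) and transfer to the universal $S^{2k}$-bundle over $BSO(2k+1)$, together with the splitting principle and Borel's description of $H^*(BSO;\QQ)$, one shows that the universal sphere-bundle torsion lies in the polynomial subalgebra generated by Pontryagin and Euler classes. Consequently $\tau_M\bigl(S^{2k}(\xi)\bigr)$ is a characteristic class of the topological vector bundle underlying $\xi$, hence of $T\vv M$; since $tr^M_B$ is a purely topological operation, the second term above is therefore a tangential invariant.

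The remaining term $\tau_B\bigl(S^{2k}(\xi)\bigr)$ is again the torsion of a smooth bundle with closed even-dimensional fiber (of dimension $N+2k$), so in principle the same reduction applies. The hard part is twofold: first, to carry out rigorously the universal computation showing that sphere-bundle torsion is polynomial in Pontryagin and Euler classes (the central technical content of \cite{I:Axioms0}); and second, to handle the iteration of the transfer reduction, which a priori does \emph{not} decrease fiber dimension. In practice one organizes the argument so that a single application of the transfer, combined with the universal sphere-bundle formula applied both to $S^{2k}(\xi)\to M$ and to $S^{2k}(\xi)\to B$, already exhibits $\tau(M)$ as a polynomial in characteristic classes pulled back from the topological bundle data $(M\to B,\,T\vv M)$, at which point tangential invariance is automatic.
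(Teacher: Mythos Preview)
The paper does not provide its own proof of this statement; it is quoted from \cite{I:Axioms0}. So there is no internal argument to compare against, and I will evaluate your sketch on its own terms and against the approach of that reference.

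You correctly identify the transfer axiom and the universal sphere-bundle computation as the essential tools, and your formula
\[
2\tau(M)=\tau_B\bigl(S^{2k}(\xi)\bigr)-tr^M_B\bigl(\tau_M(S^{2k}(\xi))\bigr)
\]
is right. The second term is indeed handled once one knows that $\tau_M$ of a linear sphere bundle is a polynomial in characteristic classes of the bundle, hence of $T\vv M$. The gap is the first term. The bundle $S^{2k}(\xi)\to B$ has closed even fiber dimension $N+2k>N$, so applying ``the same reduction'' is genuinely circular and no induction terminates. Your sentence that ``in practice one organizes the argument'' to exhibit $\tau_B(S^{2k}(\xi))$ directly as a polynomial in tangential data is not an argument; it is exactly the statement to be proved, transported to a larger bundle. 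There is no universal sphere-bundle formula available for $\tau_B(S^{2k}(\xi))$ because this is \emph{not} a sphere bundle over $B$.

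The route taken in \cite{I:Axioms0} avoids this trap by first proving a structure theorem: in each degree $4k$, every higher torsion invariant is a linear combination of the Miller--Morita--Mumford class $M_k(M)=tr^M_B(\text{polynomial in Pontryagin classes of }T\vv M)$ and a stable (``exotic'') part. The MMM term is tangential by construction. The stable part is then shown to be tangential for closed even-dimensional fibers by a separate argument using stability and the parity behavior of the sphere-bundle torsion formula, rather than by iterating the transfer reduction you propose. The missing idea in your sketch is precisely this decomposition: it allows the two contributions to be handled by different mechanisms instead of forcing everything through a single transfer step that cannot close on itself.
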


\begin{rem}\label{rem:extension to not closed fibers}
Any higher torsion invariant can be extended to unipotent bundles $M\to B$ with compact oriented fibers using the following formula.
\[
	\t(M):=\frac12\t(DM)+\frac12\t(\d\vv M)
\]
The sign is positive ($+$) since the double $DM$ has only one copy to $\d\vv M$.
\end{rem}

We say that a higher torsion invariant $\t$ is \emph{stable} (``{exotic}'' in \cite{I:Axioms0}) if
\[
	\t(M)=\t(D(\xi))
\]
for any oriented linear disk bundle $D(\xi)$ over $M$ considered as a unipotent bundle over $B$.

\begin{thm}\cite{I:Axioms0}\label{uniqueness of exotic torsion}
The higher \IK-torsion $\t^\IK$ is a stable higher torsion invariant. Conversely, any stable higher torsion invariant is proportional to $\t^\IK$ with a possibly different proportionality constant in each degree $4k$ (and is zero in other degrees).
\end{thm}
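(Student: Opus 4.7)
The plan is to split the theorem into the existence claim that $\t^\IK$ is itself a stable higher torsion invariant, and the uniqueness claim that any other such invariant is degree-wise proportional to it.

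For the existence claim, one verifies that the Igusa-Klein torsion, constructed via parametrized Morse theory and the Wagoner-Klein higher Whitehead torsion on the total space, satisfies both axioms of Definition \ref{defn:axiomatic higher torsion}. Additivity follows from the behavior of parametrized Whitehead torsion under fiberwise cutting and pasting along a common fiberwise boundary, giving the Mayer-Vietoris-type identity in the axiom. The transfer axiom reduces to the compatibility of the underlying parametrized $A$-theoretic construction with sphere bundle projections, together with the Euler-characteristic correction term. Stability, $\t^\IK(D(\xi)) = \t^\IK(M)$ for a linear disk bundle $D(\xi) \to M$, follows because the zero section $M \into D(\xi)$ is a fiberwise homotopy equivalence, so the underlying class in parametrized pseudoisotopy K-theory is preserved; the extension in Remark \ref{rem:extension to not closed fibers} handles the fiberwise boundary contribution without disturbing the equality.

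For uniqueness, let $\t$ be any stable higher torsion invariant and write $\t = \sum_k \t_{2k}$ with $\t_{2k} \in H^{4k}(B;\RR)$. The strategy is to fix a ratio $\lambda_k \in \RR$ on a universal test family and then propagate the identity $\t_{2k} = \lambda_k \t^\IK_{2k}$ to all smooth unipotent bundles. The Hatcher examples $E^{n,m}(\xi)$ of Subsection \ref{subsecA12} provide this test family: by Remark \ref{rem:ch(xi) span H4k(B,d0)}, as $B = S^{4k}$ and the input bundle $\xi$ vary, these examples realize a spanning set in $H^{4k}(B;\RR)$, and a direct computation of $\t^\IK$ on them, either via the handlebody formula or via the analytic-torsion calculation of \cite{Goette03}, gives an explicit nonzero real multiple of $\mathrm{ch}_{4k}(\xi)$. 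Comparing $\t_{2k}$ with $\t^\IK_{2k}$ on this family defines the scalar $\lambda_k$.

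The hard part is the propagation step: showing that additivity, transfer, and stability together force $\t_{2k} = \lambda_k \t^\IK_{2k}$ on every smooth unipotent bundle, not only on the Hatcher family. The plan is to argue that the vector space of stable higher torsion invariants in degree $4k$, viewed as natural transformations on smooth unipotent bundles to $H^{4k}(-;\RR)$, is at most one-dimensional. Transfer reduces values on an arbitrary unipotent bundle to values on sphere bundles iterated over the base, additivity controls the behavior under fiberwise handle decomposition, and stability kills the freedom to twist by tangential characteristic data which would otherwise enlarge the space of invariants. Once this one-dimensionality is established, the identity on the Hatcher family extends uniquely by $\RR$-linearity. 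Vanishing of $\t$ in degrees not divisible by four is automatic, since higher torsion invariants take values in $H^{4\bullet}(B;\RR)$ by definition.
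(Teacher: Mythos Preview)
The paper does not prove this theorem; it is quoted from \cite{I:Axioms0} without argument. So there is no in-paper proof to compare against, only what one can reconstruct from that reference and from the surrounding material (Theorem \ref{higher torsion of fiberwise handlebody} and Remark \ref{rem:handlebody lemma}).

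Your existence sketch is fine in outline. The uniqueness argument has a real gap in two places.

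First, the calibration family. You propose to fix $\lambda_k$ by comparing $\tau$ and $\tau^{\IK}$ on Hatcher's exotic disk bundles, but you only say how to compute $\tau^{\IK}$ on these, not the unknown $\tau$; without that there is no ratio to read off. The argument in \cite{I:Axioms0} calibrates instead on \emph{linear} sphere bundles: transfer applied to $S^n(\xi)\to B$ with $M=B$, together with additivity for the decomposition $S^n(\xi)=D^n(\xi)\cup D^n(\xi)$ and stability, forces $\tau(S^n(\xi))$ to be a fixed scalar multiple of the Chern character of $\xi$ for \emph{every} invariant $\tau$ satisfying the axioms. That is what makes the comparison possible without already knowing $\tau$.

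Second, the propagation step. You flag it as the hard part and then assert that transfer, additivity, and stability combine to give one-dimensionality, without supplying the mechanism. The actual content is the derivation of the handlebody formula from the axioms (this is Lemma 6.6 of \cite{I:Axioms0}, as Remark \ref{rem:handlebody lemma} notes): additivity gives the relative formula $\tau(M\cup D^n(\xi)\oplus D^m(\eta),M)=(-1)^n c_k\,ch_{4k}(\xi)$ for some universal constant $c_k$, and induction over a fiberwise handle decomposition then expresses $\tau$ on any bundle in terms of its values on linear sphere bundles. More structurally, \cite{I:Axioms0} shows that the space of degree-$4k$ invariants satisfying additivity and transfer is two-dimensional, spanned by an exotic class and a Miller--Morita--Mumford tangential class; stability kills the latter, leaving a one-dimensional space. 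Your sentence about stability ``killing the freedom to twist by tangential characteristic data'' is gesturing at exactly this, but it is the theorem, not an input to it.

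So the shape of your plan is right, but the test family should be linear sphere bundles rather than Hatcher examples, and the one-dimensionality needs the handlebody-formula reduction as its substance rather than as an afterthought.
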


\begin{thm}[Badzioch, Dorabiala, Klein and Williams]\label{Lem of BDKW}
The \DWW-higher smooth torsion is a stable higher torsion invariant. Consequently, it is proportional to \IK-higher torsion in every degree.
\end{thm}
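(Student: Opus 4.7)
The plan is to verify that $\t^\DWW$ satisfies the two axioms of Definition \ref{defn:axiomatic higher torsion} together with the stability property, after which Theorem \ref{uniqueness of exotic torsion} gives the conclusion. The starting point is that $\t^\DWW(M)$ is defined as the image, under a trace/regulator map into $H^{4\bullet}(B;\RR)$, of the difference between the smooth and topological parametrized $A$-theory Euler characteristics of the unipotent bundle $M\to B$. Both Euler characteristics are natural under bundle maps and behave predictably under fiberwise gluing and under composition with fibrations.

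For \emph{additivity}, I would first record the inclusion--exclusion identity
\[
\t^\DWW(M)=\t^\DWW(M_0)+\t^\DWW(M_1)-\t^\DWW(N),
\]
where $N=\d\vv M_0=\d\vv M_1$, which follows from the additivity of the parametrized $A$-theory Euler characteristic under fiberwise cutting along a codimension-one subbundle. Applying the boundary extension formula of Remark \ref{rem:extension to not closed fibers} to each $M_i$ yields $\tfrac12\t^\DWW(DM_i)=\t^\DWW(M_i)-\tfrac12\t^\DWW(N)$. Summing over $i=0,1$ and comparing with the displayed identity produces the axiom $\t^\DWW(M)=\tfrac12\t^\DWW(DM_0)+\tfrac12\t^\DWW(DM_1)$.

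For \emph{transfer}, the plan is to use the compatibility of parametrized $A$-theory with the Becker--Gottlieb transfer along a linear sphere bundle $S^n(\xi)\to M$. That compatibility splits the $A$-theory Euler characteristic of $S^n(\xi)\to B$ into a term accounting for the fiberwise Euler characteristic $\chi(S^n)$ times the class for $M\to B$, plus the transfer of the class of $S^n(\xi)\to M$ along $M\to B$. Applying the trace map to this splitting, using that the trace is multiplicative with respect to $\chi(S^n)$ on the first summand and compatible with the cohomological transfer $tr^M_B$ on the second, delivers the transfer axiom in the required form.

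For \emph{stability}, I would observe that for any oriented linear disk bundle $D(\xi)\to M$ the zero-section is a fiberwise smooth homotopy equivalence, and the linear structure of $\xi$ makes this equivalence compatible with both the smooth and the topological parametrized $A$-theory constructions. Consequently both Euler characteristics of $D(\xi)\to B$ agree with those of $M\to B$, so their difference and hence $\t^\DWW$ are preserved; Theorem \ref{uniqueness of exotic torsion} then yields the proportionality in each degree $4k$. The main obstacle is the transfer axiom: translating the homotopy-theoretic Becker--Gottlieb splitting through the regulator to the precise combinatorial identity above requires a careful analysis of the behavior of the trace map on transferred $A$-theory classes, and this is where the bulk of the argument in \cite{BDKW} is concentrated.
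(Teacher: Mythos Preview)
The paper does not prove this theorem: it is stated as a result of Badzioch, Dorabiala, Klein and Williams and simply cited from \cite{BDKW}, with no argument supplied here. So there is no ``paper's own proof'' to compare against; the statement functions as imported background.

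That said, your outline is the correct shape of the argument in \cite{BDKW}: one verifies the two axioms of Definition~\ref{defn:axiomatic higher torsion} and the stability property for $\t^\DWW$, and then invokes Theorem~\ref{uniqueness of exotic torsion}. Your additivity paragraph is fine at the level of a sketch. You are also right to flag the transfer axiom as the genuinely hard part; showing that the $A$-theory Euler characteristic and the regulator interact correctly with the Becker--Gottlieb transfer is the substantive content of \cite{BDKW}, and your one-paragraph description does not come close to discharging it. In other words, what you have written is an accurate road map rather than a proof, and the honest thing to do---which is exactly what the paper does---is to cite \cite{BDKW} for the result.
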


\begin{rem}
Analytic torsion does not satisfy the stability condition and is therefore not proportional to \IK-torsion or \DWW-smooth torsion. See \cite{Goette08} for a precise formula relating Bismut-Lott analytic torsion to \IK-higher torsion.
\end{rem}

In \cite{I:Axioms0} it is shown that the three properties: Additivity, Transfer and Stability imply that the higher torsion of any linear sphere bundle is proportional to the Chern character. For \IK-torsion the proportionality constant is given by the following definition.

\begin{defn}\label{def: normalized Chern character}
If $\xi$ is a real vector bundle over $B$, we define the {\bf normalized Chern character} of $\xi$ to be the real cohomology class $\normch(\xi)=\sum_{k>0}\normch_{4k}(\xi)$ where
\[
	\normch_{4k}(\xi)=(-1)^k\z(2k+1)\tfrac12ch_{4k}(\xi\otimes\CC)\in H^{4k}(B;\RR)
\]
and $\z$ is the Riemann zeta function.
\end{defn}

\begin{thm}\cite{I:ComplexTorsion}
For any linear oriented sphere bundle $S^n(\xi)\to B$, we have
\[
	\t^\IK(S^n(\xi))=(-1)^{n}\normch(\xi)
\]
\end{thm}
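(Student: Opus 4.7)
The plan is to apply the axiomatic characterization of $\tau^\IK$ from Theorem \ref{uniqueness of exotic torsion}: the three properties of Additivity, Transfer, and Stability imply (as observed just before Definition \ref{def: normalized Chern character}, and proved in \cite{I:Axioms0}) that the higher torsion of any linear sphere bundle is proportional to the Chern character. Accordingly, there exist universal constants $\alpha(n,k) \in \RR$ such that
\[
\tau^\IK_{4k}(S^n(\xi)) = \alpha(n,k)\, ch_{4k}(\xi \otimes \CC),
\]
and the remaining task is to compute these constants.

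To determine the $n$-dependence, I would combine additivity with stability. Writing $\varepsilon$ for a trivial line bundle, the hemisphere $D^{n+1}_\pm(\xi)$ is a linear disk bundle over $B$, so Stability gives $\tau^\IK(D^{n+1}_\pm(\xi)) = \tau^\IK(B) = 0$. Its fiberwise double equals $S^{n+1}(\xi \oplus \varepsilon)$ and its fiberwise boundary equals $S^n(\xi)$, so the extension of additivity to compact fibers with boundary (Remark \ref{rem:extension to not closed fibers}) yields
\[
0 \;=\; \tau^\IK(D^{n+1}_\pm(\xi)) \;=\; \tfrac12\tau^\IK(S^{n+1}(\xi\oplus\varepsilon)) + \tfrac12 \tau^\IK(S^n(\xi)),
\]
so $\tau^\IK(S^{n+1}(\xi\oplus\varepsilon)) = -\tau^\IK(S^n(\xi))$. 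Since $ch_{4k}$ is unchanged by addition of trivial summands, this translates into the scalar recursion $\alpha(n+1,k) = -\alpha(n,k)$, producing the overall sign $(-1)^n$.

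The remaining input is the base case $\alpha(1,k)$. For this I would compute $\tau^\IK$ directly on the unit circle bundle $S(L) \to B$ of a complex line bundle $L$ with $ch_{4k}(L \otimes \CC) \neq 0$, using the standard fiberwise height function: this is a fiberwise generalized Morse function with two critical points per fiber. The Morse-theoretic construction of $\tau^\IK$ in \cite{I:BookOne} expresses its value on such a bundle in terms of the Borel regulator on $K_{4k+1}(\ZZ)$, which outputs precisely $\z(2k+1)$, together with the sign $(-1)^k$ coming from the conventional identification of the $K$-theoretic generator with the odd zeta value. Comparison with Definition \ref{def: normalized Chern character} yields $\alpha(1,k) = -(-1)^k\z(2k+1)/2$, and the sign recursion then delivers the claimed formula.

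The main obstacle is the base-case computation, which is the actual content of \cite{I:ComplexTorsion}: the axiomatic framework alone determines the answer only up to a single scalar in each degree $4k$, while extracting the specific value $\z(2k+1)$ requires invoking the Morse-theoretic definition of $\tau^\IK$ and its identification with the Borel regulator on the $K$-theory of the integers. The two reduction steps — the overall shape $\alpha(n,k)\,ch_{4k}$ and the $(-1)^n$ sign — are formal consequences of the axioms and are the easy parts of the argument.
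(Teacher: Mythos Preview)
The paper does not give a proof of this theorem at all: it is stated with a bare citation to \cite{I:ComplexTorsion}, preceded only by the remark that the three axioms (Additivity, Transfer, Stability) force $\tau^\IK$ on linear sphere bundles to be proportional to the Chern character, with the proportionality constant then fixed by Definition~\ref{def: normalized Chern character}. Your outline is therefore not so much an alternative proof as an unpacking of what the citation encodes, and in that capacity it is accurate: the axiomatic reduction to a degreewise constant is from \cite{I:Axioms0}, your sign recursion via $\tau^\IK(D^{n+1}(\xi))=0$ and Remark~\ref{rem:extension to not closed fibers} is correct (and indeed the paper itself uses exactly this identity in the proof of Corollary~\ref{cor: torsion of twisted Hatcher sphere bundle}), and you rightly isolate the evaluation of the constant as the genuine content of \cite{I:ComplexTorsion}.

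One small caution on the base case: your sketch via a complex line bundle and the Borel regulator is the right flavor, but the actual argument in \cite{I:ComplexTorsion} proceeds through the Framing Principle and a direct Morse-theoretic computation rather than a bare appeal to $K_{4k+1}(\ZZ)$; since you are only sketching and explicitly deferring to the citation, this is not a gap, but you should not present that paragraph as a self-contained computation.
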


To calculate the higher torsion of Hatcher's example, we use the following   formula. Suppose that a smooth bundle $M\to B$ has a fiberwise handlebody decomposition:
\[
	M=\bigcup D(\xi_i)\oplus D(\eta_i)
\]
where $\xi_i,\eta_i$ are oriented vector bundles over $B$ of dimension $n_i,m_i$ and $D^{n_i}(\xi_i)\oplus D^{m_i}(\eta_i)$ is attached to lower handles along $S^{n_i-1}(\xi_i)\oplus D^{m_i}(\eta_i)$. In other words, $D^{n_i}(\xi_i)\oplus D^{m_i}(\eta_i)$ is an $n_i$-handle with \emph{core} $D^{n_i}(\xi_i)$. 

\begin{thm}\label{higher torsion of fiberwise handlebody}
The higher \IK-torsion of the fiberwise handlebody $M$ is given by
\[
	\t^\IK(M)=\sum (-1)^{n_i}\normch(\xi_i)
\]
\end{thm}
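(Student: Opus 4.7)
The plan is to induct on the number of handles, using an inclusion--exclusion form of additivity combined with stability and the sphere bundle formula.

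\emph{Base case.} A single $0$-handle is a disk bundle $D^{m_0}(\eta_0)\to B$, so stability (Theorem~\ref{uniqueness of exotic torsion}) gives $\t^\IK(D^{m_0}(\eta_0))=\t^\IK(B)=0$, which matches the formula since $\xi_0$ is $0$-dimensional and therefore $\normch(\xi_0)=0$.

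\emph{Inductive step.} Suppose $M'=M\cup H$, where $M$ is the union of the first $k-1$ handles and $H=D^n(\xi)\oplus D^m(\eta)$ is an $n$-handle attached along $A=S^{n-1}(\xi)\oplus D^m(\eta)\subset\d\vv M$. I will apply an inclusion--exclusion additivity identity
\[
\t^\IK(M')=\t^\IK(M)+\t^\IK(H)-\t^\IK(A),
\]
derived from the axiomatic additivity of Definition~\ref{defn:axiomatic higher torsion} together with the extension formula of Remark~\ref{rem:extension to not closed fibers}. Now $\t^\IK(H)=0$, because $H$ is the linear disk bundle $D^{n+m}(\xi\oplus\eta)$ over $B$ and stability gives $\t^\IK(H)=\t^\IK(B)=0$. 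Moreover, $\t^\IK(A)=(-1)^{n-1}\normch(\xi)$: the projection $\pi\colon S^{n-1}(\xi)\to B$ presents $A=D^m(\pi^*\eta)$ as a linear disk bundle over $S^{n-1}(\xi)$, so stability yields $\t^\IK(A)=\t^\IK(S^{n-1}(\xi))$, which equals $(-1)^{n-1}\normch(\xi)$ by the sphere bundle formula of \cite{I:ComplexTorsion}. Combining these gives $\t^\IK(M')=\t^\IK(M)+(-1)^n\normch(\xi)$, and the inductive hypothesis $\t^\IK(M)=\sum_{i<k}(-1)^{n_i}\normch(\xi_i)$ then yields the claimed formula.

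\emph{Main obstacle.} The delicate point is establishing the inclusion--exclusion additivity in the generality needed. The axiomatic version treats only a closed bundle $M=M_0\cup M_1$ with $M_0\cap M_1=\d\vv M_0=\d\vv M_1$, whereas in our handle attachment $A$ is only a codimension-$0$ submanifold of each $\d\vv M_i$, and the complementary pieces of the boundaries are joined along $\d A$ to form $\d\vv M'$. I plan to derive the general formula by applying the extension formula to $M'$, $M$, $H$, and $A$ and then analyzing the resulting closed doubles $DM'$, $DM$, $DH$ and the partial doubles along $A$ via the axiomatic additivity, keeping careful track of the corner contributions along $\d A$ and verifying that unipotence is preserved by all the intermediate constructions.
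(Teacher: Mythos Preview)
Your approach is essentially the paper's: induction on the number of handles, with the inductive step amounting to the relative additivity property
\[
\t^\IK(M\cup D^n(\xi)\oplus D^m(\eta),M)=(-1)^n\normch(\xi),
\]
which is exactly what your inclusion--exclusion formula plus the computations $\t^\IK(H)=0$ and $\t^\IK(A)=(-1)^{n-1}\normch(\xi)$ yield. The paper does not carry out the derivation of this relative additivity from the closed-fiber axiom either; it cites \cite{I:Axioms0}, Lemma~6.6, for precisely this step.

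Two comments on the part you flagged as the main obstacle. First, your doubling plan is the right idea and is how the argument in \cite{I:Axioms0} proceeds: one writes each of $DM'$, $DM$, $DH$, $\d\vv M'$, $\d\vv M$, $\d\vv H$ as unions along full fiberwise boundaries and applies the closed-fiber additivity repeatedly; the corner contributions along $\d A$ cancel in pairs. Second, be aware that your formulation requires the \emph{absolute} torsion $\t^\IK(M)$ of each intermediate handlebody to be defined, hence each partial union to be unipotent. This is not automatic from unipotence of the final $M$, so either check it in the cases you need (for the applications in this paper the intermediate bundles are contractible-fiber or sphere bundles, hence unipotent) or reformulate the inductive step in terms of the relative torsion $\t^\IK(M',M)$, which is how the paper states it and which sidesteps the issue.
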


\begin{rem}\label{rem:handlebody lemma}
This theorem is proved in \cite{I:Axioms0}, Lemma 6.6, inductively on the number of handles using the relative additivity property:
\[
	\t^\IK(M\cup D^n(\xi)\oplus D^m(\eta),M)=(-1)^n\normch(\xi)
\]
when the $n$-handle $D^n(\xi)\oplus D^m(\eta)$ is attached to $\d\vv M$ along $S^{n-1}(\xi)\oplus D^{m}(\eta)$ by any fiberwise embedding. We will also use this relative formula. Note that the $n$-handle is actually th pair $(D^n(\xi)\oplus D^m(\eta),S^{n-1}(\xi)\oplus D^{m}(\eta))$. But we refer to $D^n(\xi)\oplus D^m(\eta)$ as the $n$-handle with \emph{base} $S^{n-1}(\xi)\oplus D^{m}(\eta)$.
\end{rem}


The following theorem summarizes Hatcher's construction and gives its two main properties proved below.

\begin{thm}\label{thm: Hatcher's example}
 Suppose that $B$ is a smooth $q$-manifold and $m>n>q$. Suppose that $\xi$ is an $n$-plane bundle over $B$ which is trivial over $\d_0B\subset\d B$ so that $J(\xi)=0\in J(B/\d_0B)$. Then Hatcher's construction gives a smooth bundle $E^{n,m}(\xi)$ over $B$ with fiber $D^{n+m}$. Furthermore:
 \begin{enumerate}
 \item This bundle is fiberwise diffeomorphic to the trivial bundle over $\d_0B$ and {fiberwise homeomorphic to the trivial bundle over $B$ with fiber $D^{n+m}$.} 
 \item The higher \IK-torsion is given by
\[
	\t^\IK(E^{n,m}(\xi))=(-1)^n\normch(\xi)
\]
\end{enumerate}
\end{thm}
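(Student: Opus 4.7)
The plan is to prove the two claims separately. Part (1) follows Hatcher's original topological unknotting argument, while Part (2) is a direct application of the fiberwise handlebody formula (Theorem \ref{higher torsion of fiberwise handlebody}).

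For Part (1), over $\d_0 B$ the bundle $\xi$ is trivial by hypothesis, and by the uniqueness clause of Lemma \ref{embedding lemma} the embedding $j$ is fiberwise isotopic to the standard one, so $E^{n,m}(\xi)$ is fiberwise diffeomorphic to the trivial $D^{n+m}$-bundle over $\d_0 B$. For topological triviality over all of $B$, I would exploit the fiber homotopy trivialization $g:S^{n-1}(\xi)\simeq S^{n-1}\times B$ constructed in Subsection \ref{subsecA11}, which exists because $J(\xi)=0$. The fiberwise mapping cone $C(g)$ is a fibration over $B$ with contractible $n$-dimensional cell-complex fibers, and $E^{n,m}(\xi)$ is a codimension-$m$ fiberwise regular neighborhood of $C(g)$ inside the trivial bundle $D_2^n\times D^m\times B$ via the embedding $F(j)$. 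Since $m>q$, the fiber homotopy equivalence $g$ promotes, by topological unknotting in codimension $m$, to a fiberwise homeomorphism of the boundary sphere bundles extending the given trivialization over $\d_0 B$, and this in turn extends across the disks to the desired fiberwise homeomorphism $E^{n,m}(\xi)\cong D^{n+m}\times B$.

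For Part (2), I would exhibit a fiberwise handle decomposition of $E^{n,m}(\xi)$ and apply the formula of Theorem \ref{higher torsion of fiberwise handlebody}. Taking the basepoint disk $i(D^{n+m})$ as a trivial $0$-handle, the closure of its complement in $S^{n-1}\times I\times D^m$ is diffeomorphic to $D^{n-1}\times D^{m+1}$ (reflecting the standard handle decomposition of $S^{n-1}\times D^{m+1}$ as one $0$-handle plus one $(n-1)$-handle), giving a trivial $(n-1)$-handle over $B$; the final $n$-handle $D^n(\xi)\oplus D^m(\eta)$ is then attached along $S^{n-1}(\xi)\oplus D^m(\eta)$ via $S(j)$. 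Since the $0$- and $(n-1)$-handles have trivial core bundles with vanishing normalized Chern character, Theorem \ref{higher torsion of fiberwise handlebody} yields
\[
\t^\IK(E^{n,m}(\xi)) = (-1)^n\normch(\xi).
\]

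The main obstacle will be the topological triviality in Part (1); the torsion computation in Part (2) is a mechanical application of the axiomatic handlebody formula once the correct fiberwise handle decomposition is recognized. Promoting the fiber homotopy trivialization of the sphere bundle to an ambient fiberwise homeomorphism of the disk bundles uses the codimension hypothesis $m>n>q$ in an essential way, and is where Hatcher's original argument becomes delicate, relying on topological regular neighborhood theory in high codimension together with the embedding-space stability already invoked in Lemma \ref{embedding lemma}.
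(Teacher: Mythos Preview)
Your Part (2) matches the paper's proof: both invoke Theorem \ref{higher torsion of fiberwise handlebody} after observing that $E^{n,m}(\xi)$ is obtained from a trivial bundle by attaching the single $n$-handle $D^n(\xi)\oplus D^m(\eta)$, so only the $(-1)^n\normch(\xi)$ term survives. Your explicit identification of the $0$- and $(n-1)$-handles is more detailed than necessary but harmless.

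For Part (1), however, you are working much harder than the paper does. The paper dispatches topological triviality in one line via the Alexander trick: the group of homeomorphisms of $D^{n+m}$ which are the identity on the southern hemisphere is contractible, so any topological $D^{n+m}$-bundle with a fixed half-boundary is trivial. There is no need to promote the fiber homotopy trivialization $g$ to an ambient homeomorphism, invoke topological unknotting, or appeal to high-codimension regular neighborhood theory. Your route may be viable in principle, but it introduces exactly the ``main obstacle'' you yourself flag, and that obstacle simply evaporates once you remember that $\mathrm{Homeo}(D^{n+m}\text{ rel southern hemisphere})\simeq\ast$. The codimension hypothesis $m>n>q$ is needed only for the \emph{smooth} construction (existence and uniqueness of the embedding $j$ in Lemma \ref{embedding lemma}), not for the topological triviality.
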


\begin{proof}
The higher torsion calculation follows from Theorem \ref{higher torsion of fiberwise handlebody} since $E^{n,m}(\xi)$ is given by attaching the $n$-handle $D^n(\xi)\oplus D^m(\eta)$ to a trivial bundle. 


The bundle is topologically trivial by the Alexander trick. (The topological group of homeomorphism of the disk $D^{n+m}$ which are the identity on the southern hemisphere is contractible.)
\end{proof}

Take $q=4k,n=4k+1, m\ge 4k+2, B=S^{4k}$ and using the well known fact that the order of the image of the $J$-homomorphism $J:\pi_{4k-1}O\to \pi_{4k-1}^s$, which we denote $a_k$, is the denominator of $B_k/4k$ where $B_k$ is the $k$-th Bernoulli number \cite{Adams}, we get the following.

\begin{cor}
For any $k>0, N\ge 8k+3$ Hatcher's construction gives a smooth $N$-disk bundle over $S^{4k}$ which is tangentially homeomorphic to $D^N\times S^{4k}$ but has higher \IK-torsion invariant $\tau^\IK_{2k}\in H^{4k}(S^{4k};\RR)$ equal to $\zeta(2k+1)a_{k}$ times the generator of $H^{4k}(S^{4k};\ZZ)$ for $k$ odd and half of that number when $k$ is even.  In both cases this gives a nontrivial element of $\pi_{4k-1} Dif\!f(D^N)/O_N\otimes\RR$.
\end{cor}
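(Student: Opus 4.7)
The plan is to specialize Theorem \ref{thm: Hatcher's example} with $B=S^{4k}$ (so $q=4k$ and $\d_0B=\emptyset$), $n=4k+1$, and $m=N-n$. The hypothesis $N\ge 8k+3$ gives $m\ge 4k+2$, so that $m>n>q$ and indeed $m\ge q+3$, ensuring both existence and uniqueness in Lemma \ref{embedding lemma}. It remains to pick an appropriate $(4k+1)$-plane bundle $\xi$ on $S^{4k}$, check $J(\xi)=0$, and evaluate $\normch(\xi)$.

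Since $n>q$, oriented $(4k+1)$-plane bundles over $S^{4k}$ are classified (stably, via Bott periodicity) by $\pi_{4k-1}SO_{4k+1}\cong\pi_{4k-1}O\cong\ZZ$. Let $\xi_1$ correspond to a generator and set $\xi:=a_k\xi_1$. By Adams' theorem the image of $J\colon\pi_{4k-1}O\to\pi_{4k-1}^s$ has order exactly $a_k$, so $J(\xi)=a_kJ(\xi_1)=0$ in the stable stem and hence in $J(S^{4k})$. Theorem \ref{thm: Hatcher's example} then produces a smooth $N$-disk bundle $E:=E^{n,m}(\xi)$ over $S^{4k}$, tangentially homeomorphic to $D^N\times S^{4k}$, satisfying
\[
\tau^\IK(E)=(-1)^{4k+1}\normch(\xi)=-a_k\normch(\xi_1).
\]

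To extract the integer coefficient, let $g\in H^{4k}(S^{4k};\ZZ)$ be the positive generator. Standard Bott periodicity for the complexification $c\colon\widetilde{KO}(S^{4k})\to\widetilde{K}(S^{4k})$ gives $c(\xi_1)=d_k\cdot u$, where $u$ is the complex Bott generator (so $ch_{4k}(u)=g$) and
\[
d_k=\begin{cases}2 & k\text{ odd,}\\ 1 & k\text{ even.}\end{cases}
\]
Substituting into Definition \ref{def: normalized Chern character} yields
\[
\tau^\IK_{2k}(E)=(-1)^{k+1}\z(2k+1)\tfrac{a_kd_k}{2}\,g,
\]
whose absolute value is $\z(2k+1)a_k$ for $k$ odd and $\tfrac12\z(2k+1)a_k$ for $k$ even, as claimed.

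Finally, for nontriviality in $\pi_{4k-1}Dif\!f(D^N)/O_N\otimes\RR$: by the Alexander trick and part (1) of Theorem \ref{thm: Hatcher's example}, $E$ is topologically trivial, so it determines a class in $\pi_{4k-1}(Dif\!f(D^N)/O_N)$. Higher $\IK$-torsion is additive on bundles (Definition \ref{defn:axiomatic higher torsion}) and takes values in the $\RR$-vector space $H^{4k}(S^{4k};\RR)$, hence it descends to a homomorphism on $\pi_{4k-1}(Dif\!f(D^N)/O_N)\otimes\RR$; since $\tau^\IK(E)\ne 0$, the class of $E$ is nonzero there. The only substantive step beyond direct substitution into the formulas already established is the arithmetic identification of $ch_{4k}$ of the real generator on $S^{4k}$, which is where the parity-dependent factor $d_k$ enters; everything else is a straightforward assembly of Adams' $J$-homomorphism calculation with Theorem \ref{thm: Hatcher's example}.
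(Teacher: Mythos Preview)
Your proof is correct and follows essentially the same approach as the paper: specialize Theorem \ref{thm: Hatcher's example} to $B=S^{4k}$, take $\xi=a_k\xi_1$ so that $J(\xi)=0$ by Adams, and then compute $\normch(\xi)$ via Bott periodicity together with the parity-dependent factor in the complexification map $\pi_{4k}BO\to\pi_{4k}BU$. One small remark on the final paragraph: the reason $\tau^\IK$ descends to $\pi_{4k-1}(Dif\!f(D^N)/O_N)\otimes\RR$ is not the additivity axiom per se but rather the stability property (Theorem \ref{uniqueness of exotic torsion}), which forces $\tau^\IK$ to vanish on linear disk bundles; with that correction your argument for nontriviality is fine.
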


\begin{proof}
It follows from Bott periodicity (\cite{Bott59}, \cite[18.9]{HusemollerFB}) that the Chern character of the stable complex vector bundle over $S^{2k}$ corresponding to a generator of $\pi_{2k}BU=\ZZ$ is equal to a generator of $H^{2k}(S^{2k};\ZZ)$. Also, the homotopy fiber sequence $BO\to BU\to \Omega^6BO$ given by the inclusion map $O\to U$ implies that the generator of $\pi_{4k}BO$ maps to the generator of $\pi_{4k}BU$ for $k$ even and to twice the generator when $k$ is odd. The generator of the kernel of the $J$-homomorphism is $a_k$ times this element. By the theorem above, the higher torsion of this exotic bundle is given by multiplying this element by $\frac12\z(2k+1)$ giving the formula in the corollary up to sign. We can make the sign positive by taking the other generator of the kernel of the $J$-homomorphism in Hatcher's construction.
\end{proof}


%
%

\section{Variations of Hatcher's construction} We need several variations and extensions of Hatcher's construction in order to construct a full rank subgroup of the group of all possible tangential smooth structures on a smooth manifold bundle with sufficiently large odd dimensional fibers. The idea is to construct ``positive'' and ``negative'' ``suspensions'' of Hatcher's basic construction which will cancel. We call this the ``Arc de Triomphe'' construction due to the appearance of the figures used to explain the construction. Since the stabilization of bundles with even dimensional fibers includes bundles whose fiber dimensions are arbitrarily large and odd, this construction also produces ``all'' stable tangential smooth structures on bundles with even dimensional fibers.


\subsection{Arc de Triomphe: basic construction}\label{ss:AdT}\label{subsecA21}


There are two ``suspensions'' of $E^{n,m}$ to one higher dimension. We will see that their union is trivial:
\[
	E^{n,m+1}(\xi)\cup E^{n+1,m}(\xi)\cong D^{n+m+1}\times B
\]
This is in keeping with the calculation of their higher torsions:
\[
	\t^\IK(E^{n,m+1}(\xi))+\t^\IK(E^{n+1,m}(\xi))=(-1)^n\normch(\xi)+(-1)^{n+1}\normch(\xi)=0
\]
and the handlebody theorem \ref{higher torsion of fiberwise handlebody} which implies that the higher torsion of a union of fiberwise handlebodies is the sum of torsions of the pieces.

The {\bf positive suspension} of $E^{n,m}(\xi)$ is defined simply as the product (with corners rounded):
\[
	\s_+E^{n,m}(\xi)=E^{n,m}(\xi)\times I
\]
An examination of the definitions shows that this is the same as $E^{n,m+1}(\xi)$.

The {\bf negative suspension} of $E^{n,m}(\xi)$ uses the embedding $F(j)=D(j)\cup f_B:E^{n,m}(\xi)\into D_2^n\times D^m\times B$ and is defined as follows.
\[
	\s_-E^{n,m}(\xi)=D_2^n\times D^m\times[-1,0]\times B\cup_{F(j)\times 0} E^{n,m}(\xi)\times I\cup_{F(j)\times 1} D_2^n\times D^m\times [1,2]\times B
\]
This is a subbundle of $D_2^n\times D^m\times[-1,2]\times B$. We claim that $\s_-E^{n,m}(\xi)$ is a model for $E^{n+1,m}(\xi)$ over $B$ in the sense that the construction of $E^{n+1,m}(\xi)$, which may not be unique, could give $\s_-E^{n,m}(\xi)$. (We view $\xi$ as a stable vector bundle.) Lemma \ref{embedding lemma} then tells us that we have uniqueness after stabilizing just once:
\[
	\s_-E^{n,m}(\xi)\times I\cong E^{n+1,m}(\xi)\times I=E^{n+1,m+1}(\xi)
\]
since $m+1\ge q+3$. To verify this claim note that $\s_-E^{n,m}(\xi)$ contains the trivial bundle over $B$ with fiber
\[
	F=D^n\times D^m\times[-1,0]\cup
	S^{n-1}\times I\times D^m\times [0,1]
	\cup D^n\times D^m\times[1,2]
\]
which is diffeomorphic to $S^n\times D^{m+1}$ after its corners are rounded. On this is attached the $n+1$ handle $D^n(\xi)\oplus D^m(\eta)\times I$ which is equivalent to $D^{n+1}(\xi)\oplus D^m(\eta)$ after corners are rounded. Since $D^{n+1}(\xi)$ is the core of this handle, the result is $E^{n+1,m}(\xi)$.

When we take the union of the positive and negative suspensions of $E^{n,m}(\xi)$, they cancel. This will follow from the following lemma which does not require proof.

\begin{lem}\label{first trivial lemma}
Suppose that $E_0,E_1$ are compact smooth manifold bundles over $B$ with the same fiber dimension. Let $f:E_0\to E_1$ be a smooth embedding over $B$. Then
\[
	E_0\times [0,1]\cup_{f\times 1}E_1\times [1,2]
\]
is fiberwise diffeomorphic to $E_1\times I$ after rounding off corners.
\end{lem}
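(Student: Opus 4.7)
The plan is to realize $X := E_0 \times [0,1] \cup_{f \times 1} E_1 \times [1,2]$ as the region lying above the graph of a smooth function on $E_1 \times [0,2]$, and then straighten this region by a fiberwise linear reparametrization in the $t$-direction.

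First I would use the embedding $f$ to produce a canonical fiberwise smooth map $X \to E_1 \times [0,2]$ sending $(x,t) \in E_0 \times [0,1]$ to $(f(x), t)$ and equal to the inclusion on $E_1 \times [1,2]$. This is well-defined across the gluing locus and is a fiberwise embedding (away from the corners) onto the ``L-shaped'' sub-bundle
\[
Y = f(E_0) \times [0,1] \,\cup\, E_1 \times [1,2] \,\subset\, E_1 \times [0,2],
\]
which has a codimension-two corner along $\partial f(E_0) \times \{1\}$ in addition to the usual corners along the vertical boundary.

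Next, applying the fiberwise collar-neighborhood theorem to $\partial f(E_0) \subset E_1$, I would construct a smooth fiberwise function $\psi: E_1 \to [0,1]$ which equals $0$ on the interior of $f(E_0)$ outside a small collar, equals $1$ on $E_1 \setminus f(E_0)$ outside a small collar, and interpolates smoothly across $\partial f(E_0)$. Rounding the corners of $Y$ produces (up to fiberwise diffeomorphism) precisely the region
\[
Y' = \{(y,t) \in E_1 \times [0,2] : \psi(y) \leq t \leq 2\},
\]
since the rounding procedure simply replaces the non-smooth lower boundary of $Y$ by the smooth graph $t = \psi(y)$.

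Finally, the fiberwise diffeomorphism $E_1 \times [0,2] \to Y'$ defined by $(y,s) \mapsto \bigl(y,\ \psi(y) + s(2-\psi(y))/2\bigr)$ exhibits $Y'$ as diffeomorphic to the trivial bundle $E_1 \times [0,2] \cong E_1 \times I$. Composing all three identifications yields the desired fiberwise diffeomorphism. The only nontrivial point --- and the reason the author calls this a ``trivial lemma'' --- is that the collar-neighborhood and corner-smoothing constructions must be carried out fiberwise over $B$; this is routine using the parametric versions of these standard techniques.
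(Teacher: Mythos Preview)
Your argument is correct and is essentially the standard way one would write out this ``trivial'' fact: realize the glued space as an L-shaped region inside $E_1\times[0,2]$, smooth the inner corner to a graph, then straighten by a fiberwise affine reparametrization of the interval coordinate. The only minor caveat is that in the paper's intended application (Remark~\ref{basic AdT}) part of $\partial f(E_0)$ actually lies on $\partial^{\mathsf v}E_1$ rather than in its interior, so your collar function $\psi$ should be constructed relative to that portion of the boundary; this requires no new idea, just the relative collar theorem.

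As for comparison with the paper: the authors give \emph{no} proof. The lemma is introduced with the phrase ``the following lemma which does not require proof,'' and is used as a black box. So there is nothing to compare against---your write-up simply supplies the details the authors deemed unnecessary.
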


\begin{rem}\label{basic AdT} The example that we have in mind is
\[
	E^{n,m}(\xi)\times [0,1]\cup_{F(j)\times 1} D^n\times D^m\times[1,2]\times B\cong D^n\times D^m\times I\times B
\]
We denote the construction on the left by $V^{n,m}(\xi)$.
\end{rem}

Next we use another trivial lemma:

\begin{lem}\label{second trivial lemma}
Suppose that $\d\vv E_1= \d_0E_1\cup \d_1E_1$ where $ \d_iE_1$ are smooth manifold bundles over $B$ with the same fiberwise boundary. Let $f,g:\d_0E_1\to\d\vv E_0$ be smooth embeddings over $B$ which are fiberwise isotopic. Then $
	E_0\cup_f E_1
$ and $
	E_0\cup_g E_1
$ are fiberwise diffeomorphic over $B$ after rounding off the corners.
\end{lem}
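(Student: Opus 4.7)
The plan is to promote the given fiberwise isotopy from $f$ to $g$ into a fiberwise ambient diffeomorphism of $E_0$, and then to splice it with the identity on $E_1$ across the attaching locus.

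First, I would fix a fiberwise smooth isotopy $H\colon \d_0 E_1\times I\to \d\vv E_0$ of fiberwise embeddings with $H_0=f$ and $H_1=g$. Since $\d_0 E_1$ is a compact subbundle of the compact fiberwise boundary $\d\vv E_1$, the parameterized (fiberwise) isotopy extension theorem produces a fiberwise ambient isotopy $\Psi_t\colon \d\vv E_0\to \d\vv E_0$ over $B$ with $\Psi_0=\mathrm{id}$ and $\Psi_1\circ f=g$. Using a fiberwise collar of $\d\vv E_0$ inside $E_0$ (which exists by the parameterized collar neighborhood theorem), I extend $\Psi_t$ to a fiberwise ambient isotopy $\Phi_t\colon E_0\to E_0$ over $B$ which is supported in that collar, yielding a fiberwise diffeomorphism $\Phi_1\colon E_0\to E_0$ with $\Phi_1\circ f=g$.

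Next, I would define a map $\Phi\colon E_0\cup_f E_1\to E_0\cup_g E_1$ by setting $\Phi=\Phi_1$ on $E_0$ and $\Phi=\mathrm{id}$ on $E_1$. Compatibility on the glued locus follows immediately from $\Phi_1\circ f=g$: a point $y\in \d_0 E_1$, identified with $f(y)\in\d\vv E_0$ in the source, is sent to $y\sim g(y)=\Phi_1(f(y))$ in the target, a single point. On each side, rounding of corners along $f(\d_0 E_1)$, respectively $g(\d_0 E_1)$, is performed by the standard construction (smoothing in a collar). Choosing the same smoothing recipe on both sides and using that $\Phi$ carries the collar of $f(\d_0 E_1)$ to a collar of $g(\d_0 E_1)$, we obtain a fiberwise smooth diffeomorphism over $B$ after corner rounding.

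The main obstacle is the fiberwise isotopy extension step: one must invoke a parameterized version of isotopy extension for smooth embeddings between bundles over $B$, taking care that the isotopy is rel the boundary of $\d_0 E_1$ inside $\d\vv E_1$ (so that corner structure is preserved), and that the extension into $E_0$ respects the chosen collars. Once this is set up, integrating a time-dependent vector field along $H_t$, cut off by a bump function in the collar direction, gives the required $\Phi_t$ in the standard way; the rest of the argument is formal.
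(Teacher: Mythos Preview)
Your argument is correct and is the standard proof via fiberwise isotopy extension and collar neighborhoods. The paper itself offers no proof: it labels this a ``trivial lemma'' and simply states it, so your write-up supplies exactly the routine verification the authors left implicit.
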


In our example, $\d_0E_1$ will be a disk bundle. So, we need the following well-known lemma.

{
\begin{lem}\label{third trivial lemma}
Suppose that $D,D_0$ are smooth $n$-disk bundles over $B$ so that $D_0$ is a subbundle of $D$ which is disjoint from the fiberwise boundary $\d\vv D$. Let $E\to B$ be another smooth manifold bundle with fiber $F$. Then any two fiberwise embeddings $D\to E$ over $B$ are fiberwise isotopic if and only if their restrictions to $D_0$ are fiberwise isotopic.
\end{lem}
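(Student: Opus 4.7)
The ``only if'' direction is immediate: restrict the given fiberwise isotopy on $D$ to $D_0$. For the ``if'' direction, suppose $f_0, f_1 \colon D \to E$ are fiberwise embeddings over $B$ whose restrictions to $D_0$ are fiberwise isotopic. My plan is to proceed in two standard steps, using only parametrized versions of classical differential-topological tools.

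First, I would apply the parametrized isotopy extension theorem to the given fiberwise isotopy $f_0|_{D_0} \simeq f_1|_{D_0}$, lifting it to a compactly supported fiberwise ambient isotopy of $E$ over $B$; such an extension exists because $D_0$ is a compact subbundle of $D \subset E$ and the isotopy proceeds through fiberwise embeddings. Composing $f_0$ with the time-$1$ diffeomorphism of this ambient isotopy produces a fiberwise embedding $f_0' \simeq f_0$ with $f_0'|_{D_0} = f_1|_{D_0}$, so I may henceforth assume that $f_0$ and $f_1$ already agree on $D_0$.

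Second, since $D_0 \cap \d\vv D = \emptyset$, for each $b \in B$ the fiber $D_0|_b$ lies in the interior of $D|_b$, so the closure $C := \ov{D \setminus D_0}$ is fiberwise diffeomorphic to $\d\vv D_0 \times I$ (via an identification sending $\d\vv D_0 \times \{0\}$ to $\d\vv D_0$ and $\d\vv D_0 \times \{1\}$ to $\d\vv D$). Restricted to $C$, $f_0$ and $f_1$ become two fiberwise embeddings of $\d\vv D_0 \times I$ into $E \setminus f_0(\interior D_0)$ that agree on $\d\vv D_0 \times \{0\}$; equivalently, two fiberwise exterior collars of the sphere bundle $f_0(\d\vv D_0)$ in the manifold bundle $E \setminus f_0(\interior D_0)$. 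The parametrized uniqueness of collars then produces a fiberwise isotopy between them rel $\d\vv D_0 \times \{0\}$, which combined with the constant isotopy on $D_0$ (and smoothed along $\d\vv D_0$) yields the required isotopy $f_0 \simeq f_1$.

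The main obstacle is purely technical: upgrading the classical tools of isotopy extension and uniqueness of collars from a single manifold to the parametrized setting over the base $B$, which may itself be a manifold with boundary or corners. These upgrades are standard, obtained by a partition-of-unity argument on $B$ applied to the usual fiberwise proofs, which is presumably why the authors classify the statement as a ``well-known lemma.''
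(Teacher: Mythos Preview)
Your approach differs from the paper's and, while correct in spirit, contains a gap. The paper does not use isotopy extension in $E$ or uniqueness of collars. Instead it proves the weaker, more directly accessible statement that there is a fiberwise isotopy $\varphi_t \colon D \to D$ over $B$ with $\varphi_0 = \mathrm{id}_D$ and $\varphi_1(D) \subset D_0$, built simplex-by-simplex over a triangulation of $B$ (both bundles being trivial over each simplex) and patched together via the ordinary isotopy extension theorem. Given embeddings $g,h \colon D \to E$ with $g|_{D_0} \simeq h|_{D_0}$, one then simply concatenates $g \circ \varphi_t$, the given isotopy on $D_0$ precomposed with $\varphi_1$, and $h \circ \varphi_{1-t}$.

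Your Step~1 is fine, but Step~2 hides the real work. The assertion that $C = \ov{D \setminus D_0}$ is fiberwise diffeomorphic to $\d\vv D_0 \times I$ is a parametrized annulus theorem, and you have justified it only pointwise: knowing that each fiber $C_b$ is an annulus does not by itself produce a \emph{bundle} isomorphism $C \cong \d\vv D_0 \times I$ over $B$; the fiberwise identifications must be made to vary smoothly in $b$. That statement is true, but establishing it is no easier than the lemma itself---it is essentially equivalent to showing that the inclusion $D_0 \hookrightarrow D$ is fiberwise isotopic to a standard (e.g.\ half-radius) sub-disk-bundle, which is exactly what the paper's shrinking argument provides. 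So as written, your proof delegates the substantive content to an unproved claim of comparable strength.
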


\begin{proof} Necessity of the condition is clear. To prove sufficiency, it suffices to show that there is an isotopy of $D$ into $D_0$, i.e., a smooth family of embeddings $f_t:D\to D$ over $B$ so that $f_0$ is the identity and $f_1(D)\subseteq D_0$. Then, for any two embeddings $g,h:D\to E$ whose restrictions to $D_0$ are fiberwise isotopic, we can first compose these embeddings with the isotopy $f_t$ then use the given isotopy $g|D_0\circ f_1\simeq h|D_0\circ f_1$.

To construct the isotopy $f_t$ we triangulate the base and construct the isotopy over the simplices one at a time and use the isotopy extension theorem. For each $q\ge -1$ we will construct an embedding $f_q:D\to D$ over $B$ with the following two properties.
\begin{enumerate}
\item $f_q$ is fiberwise isotopic to the identity map $id_D$.
\item $f_q(D)$ is contained in $ int\,D_0$, the fiberwise interior of $D_0$ over $B^q$, the $q$-skeleton of $B$ under the triangulation.
\end{enumerate}
Start with $q=-1$ when $B^{-1}=\emptyset$. Then $f_{-1}=id_D$ satisfies all conditions. Now suppose that $q\ge0$ and $f_{q-1}$ has been constructed. Then on each $q$-simplex $\Delta^q$ of $B$, the bundle pair $(D,D_0)$ is trivial. So we may assume they are product bundles
\[
	(D,D_0)|\Delta^q=(D_2^n\times \Delta^q,D^n\times \Delta^q)
\]
where $D_2^n$ is the disk of radius $2$ in $\RR^n$. We are given that $f_{q-1}$ sends $D$ into $int\,D_0$ over $\d\Delta^q$. Since $f(D)\subset int\,D_0$ is an open condition, this also holds over a neighborhood of $\d\Delta^q$. Then there is no problem finding an isotopy of $f_{q-1}$ to some $f_q$ over $B^q$ fixing a neighborhood of $B^{q-1}$ so that $f_q$ sends $D$ into $int\,D_0$ over $B^q$. By the isotopy extension theorem, this isotopy extends to an isotopy over all of $B$ completing the induction. When $q$ reaches the dimension of $B$, we are done.
\end{proof}
}

We use Lemmas \ref{second trivial lemma} and \ref{third trivial lemma} for
\[
	E_1 =E^{n,m}(\xi)\times [0,1]\cup_{F(j)\times 1} D_2^n\times D^m\times[1,2]\times B
\]
$\d_0E_1=E^{n,m}(\xi)\times 0$ and $E_0=M\times [-1,0]$ with
\[
	M=E^{n,m}(\xi)\cup_{h_B} D_2^n\times D^m\times B
\]
where $h_B=h\times id_B$ and $h$ is an orientation reversing diffeomorphism of a disk $D_0^{n+m-1}$ embedded in $\d(D_2^n\times D^m)$ onto another disk $D_1^{n+m-1}$ embedded in $S^{n-1}\times 1\times D^m$ (the outside surface of the donut). The pasting map $h$ needs to be orientation reversing in order for orientations of the two pieces to agree. Assuming that $n\ge 2$, $h$ is unique up to isotopy. And it is a special case of Lemma \ref{first trivial lemma} that $M$ is fiberwise diffeomorphic to $E^{n,m}(\xi)$.

Note that both pieces of $M$ contain the product bundle $S^{n-1}\times I\times D^m\times B$ and each of these contains a basepoint disk. We call the two embeddings $i_0, i_1:D^{n+m}\times B\to M$. Since these two embeddings have image in the product bundle
\[
	(S^{n-1}\times I\times D^m\cup_h D^n_2\times D^m)\times B\subseteq M
\]
which has connected fibers and since $i_0,i_1$ are equal to fixed orientation preserving embeddings on every fiber, the two embeddings are isotopic and therefore the two bundle embeddings $i_0,i_1$ are fiberwise isotopic.

As an example of Lemma \ref{second trivial lemma}, take the mapping $f:\d_0E_1\to\d\vv E_0$ to be the inclusion map
\[
	f:E^{n,m}(\xi)\times 0\subseteq M\times 0\subseteq\d\vv E_0
\] 
and $g:\d_0E_1\to \d\vv E_0$ to be the embedding:
\[
	g:E^{n,m}(\xi)\times 0\xrarrow{F(j)}D_2^n\times D^m\times B\subseteq M\times 0\subseteq\d\vv E_0
\]
We claim that $f$ and $g$ are fiberwise isotopic. To see this we restrict both maps to the basepoint disk $i(D^{n+m})\times B\subseteq S^{n-1}\times I\times D^m\times B\subseteq E^{n,m}(\xi)\times 0$. The restriction of $f$ to the basepoint disk is $i_0$ and the restriction of $g$ to the basepoint disk is $i_1$. We have just seen that $i_0$ and $i_1$ are fiberwise isotopic. Therefore, by Lemma \ref{third trivial lemma}, $f$ and $g$ are fiberwise isotopic. Therefore, by Lemma \ref{second trivial lemma},
\[
	M\times [-1,0]\cup_f E_1\cong M\times [-1,0]\cup_g E_1
\]
where $\cong$ indicates fiberwise diffeomorphism over $B$. But, when we attach $E_1$ on top of $D^n\times D^m\times B\times [-1,0]$ using the map $F(j)$ we get exactly the negative suspension $\s_-E^{n,m}(\xi)$. So, we have a diffeomorphism which preserves all the corner sets:
\[
	M\times [-1,0]\cup_g E_1= \s_-E^{n,m}(\xi)\cup_{h_B}\s_+E^{n,m}(\xi)
\]
and
\[
	M\times [-1,0]\cup_f E_1= V^{n,m}(\xi)\cup_{h_B}D_2^n\times D^m\times B\times [-1,0]\cong D^{n+m+1}\times B
\]
where $V^{n,m}(\xi)$ is given in Remark \ref {basic AdT}.
Since $h$ is unique up to isotopy, any two choices of $h$ will produce fiberwise diffeomorphic bundles. So we get the following. (See Figure \ref{AdT figure}. The notation $E_1=A^{n,m}(\xi,\eta)$ is from subsection \ref {ss: Hatcher handles}.)

\begin{prop}[basic cancellation lemma]\label{basic AdT cancellation lemma}
The oriented union of the positive and negative suspensions of $E^{n,m}(\xi)$ glued together along fixed $n+m$ disk bundles in the fixed parts of their boundary is fiberwise diffeomorphic to the trivial $n+m+1$ disk bundle over $B$:
\[
	 \s_-E^{n,m}(\xi)\cup_{h_B}\s_+E^{n,m}(\xi)\cong D^{n+m+1}\times B.
\]
\end{prop}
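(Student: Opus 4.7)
The plan is to realize both sides of the claimed diffeomorphism as unions of the same pair of bundles, glued along attaching maps that I will show to be fiberwise isotopic; Lemma \ref{second trivial lemma} will then supply the desired diffeomorphism.

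Concretely, introduce the auxiliary bundle
\[
E_1 := E^{n,m}(\xi)\times[0,1]\cup_{F(j)\times 1}\bigl(D_2^n\times D^m\times[1,2]\times B\bigr),
\]
and set $M := E^{n,m}(\xi)\cup_{h_B}\bigl(D_2^n\times D^m\times B\bigr)$, where $h$ is the orientation-reversing diffeomorphism between an $(n{+}m{-}1)$-disk in $\d(D_2^n\times D^m)$ and one in $S^{n-1}\times 1\times D^m$; since $n\ge 2$, $h$ is unique up to isotopy. By the product reasoning of Lemma \ref{first trivial lemma}, $M$ is itself fiberwise diffeomorphic to $E^{n,m}(\xi)$.

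Next, consider two fiberwise embeddings of $\d_0 E_1 = E^{n,m}(\xi)\times 0$ into $\d\vv(M\times[-1,0])$: the tautological inclusion $f$ via $M\times 0$, and the map $g = F(j)$ followed by inclusion of $D_2^n\times D^m\times B$ as the second piece of $M\times 0$. Unwinding the definition of $\s_-E^{n,m}(\xi)$, attaching $E_1$ to $M\times[-1,0]$ via $g$ yields precisely $\s_-E^{n,m}(\xi)\cup_{h_B}\s_+E^{n,m}(\xi)$. Attaching via $f$ yields $V^{n,m}(\xi)\cup_{h_B}\bigl(D_2^n\times D^m\times B\times[-1,0]\bigr)$, which by Remark \ref{basic AdT} and another application of Lemma \ref{first trivial lemma} is fiberwise diffeomorphic to the trivial $(n{+}m{+}1)$-disk bundle $D^{n+m+1}\times B$ (after corners are rounded).

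The substantive step, and the one I expect to carry the actual content, is the fiberwise isotopy $f\simeq g$. I would invoke Lemma \ref{third trivial lemma} applied to the basepoint disk $i(D^{n+m})\subset S^{n-1}\times I\times D^m\subset E^{n,m}(\xi)$: it suffices to show that the restrictions $f|i(D^{n+m})\times B = i_0$ and $g|i(D^{n+m})\times B = i_1$ are fiberwise isotopic. Both are orientation-preserving embeddings of the same disk into the connected product bundle $\bigl(S^{n-1}\times I\times D^m\cup_h D_2^n\times D^m\bigr)\times B$; any two such embeddings into a bundle with connected fibers are fiberwise isotopic, so $i_0\simeq i_1$, hence $f\simeq g$ by Lemma \ref{third trivial lemma}. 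Lemma \ref{second trivial lemma} then gives the required diffeomorphism. The only points that need care are the corner rounding (handled by the hypotheses of Lemmas \ref{first trivial lemma}--\ref{third trivial lemma}) and the fact that $h$ must be orientation-reversing so that the orientations of the two pieces in $M$ agree; both are already arranged above.
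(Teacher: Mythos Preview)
Your proof is correct and follows essentially the same argument as the paper: the same auxiliary bundles $E_1$ and $M$, the same two attaching maps $f$ and $g$, the isotopy between them established via the basepoint disks $i_0, i_1$ and Lemma \ref{third trivial lemma}, and the conclusion via Lemma \ref{second trivial lemma}. The identifications of $M\times[-1,0]\cup_g E_1$ with the suspended union and of $M\times[-1,0]\cup_f E_1$ with a trivial disk bundle match the paper's exactly.
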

\begin{center}
\begin{figure}\label{AdT figure}
       \includegraphics[width=5.5in]{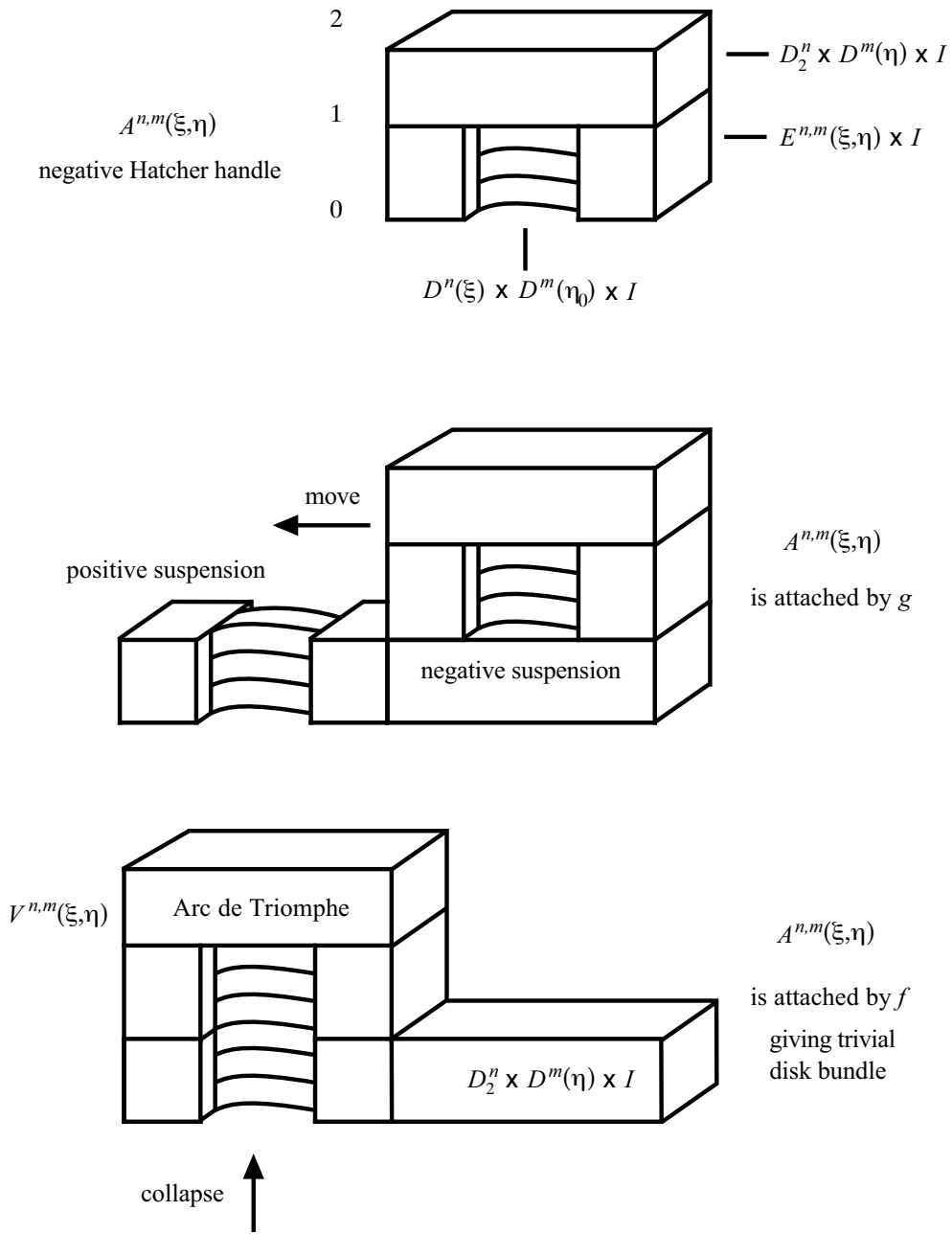}
\caption{Positive and negative Hatcher handles are cancelled using Arc de Triomphe}
\end{figure}
\end{center}
%



\subsection{Twisted version}\label{subsecA22}

Remark \ref{rem:ch(xi) span H4k(B,d0)} above and the main theorem (Corollary 2.2.2) of \cite{Second} show that, rationally stably, all exotic smooth structures on trivial disk bundles are given by Hatcher's example. Now we consider nontrivial disk bundles.

Stably, it is easy to construct exotic smooth structures on nontrivial linear disk bundles. If we start with any vector bundle $\xi_0$ over $B$ which is trivial over $\d_0B$, we can take the associated disk bundle $D^N(\xi_0)$. The fiberwise product
\[
	D^N(\xi_0)\oplus E^{n,m}(\xi)
\]
with corners rounded is a smooth disk bundle fiberwise homeomorphic to $D^N(\xi_0)\times D^{n+m}$ with the same higher torsion as $E^{n,m}(\xi)$ since $IK$ torsion has the property that it is invariant under passage to linear disk bundles.

\begin{cor}
Given any linear disk bundle $D^N(\xi_0)$ over $B$ which is trivial over $\d_0B$, the collection of all stable smooth structures on $D^N(\xi_0)$ given by Hatcher's construction spans the vector space
\[
	\pi_0\std{B}{\d_0}(D^N(\xi_0))\otimes\RR\cong H^{4\bullet}(B,\d_0B)
\]
\end{cor}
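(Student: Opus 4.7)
The plan is to combine the fibrewise Whitney sum construction described just before the corollary with the $\IK$-torsion computation of Hatcher's example (Theorem~\ref{thm: Hatcher's example}) and the spanning statement in Remark~\ref{rem:ch(xi) span H4k(B,d0)}.

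First I would, for each $n$-plane bundle $\xi$ over $B$ trivial over $\d_0B$ (with $J(\xi)=0\in J(B/\d_0B)$, which holds automatically once $\dim\xi>q$) and sufficiently large $m$, form the fiberwise Whitney sum $D^N(\xi_0)\oplus E^{n,m}(\xi)$ with corners rounded. Since $E^{n,m}(\xi)$ is fibrewise tangentially homeomorphic to a trivial disk bundle and is canonically diffeomorphic to it over $\d_0B$, the Whitney sum defines a class in $\pi_0\std{B}{\d_0 B}(D^N(\xi_0))$.

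Next I would evaluate the relative higher $\IK$-torsion of this exotic smooth structure. By the stability of $\t^\IK$ (Theorem~\ref{uniqueness of exotic torsion}), taking the fibrewise Whitney sum with a linear disk bundle leaves the torsion unchanged, so that
\[
\t^\IK\!\bigl(D^N(\xi_0)\oplus E^{n,m}(\xi),\,D^N(\xi_0)\bigr)=\t^\IK(E^{n,m}(\xi))=(-1)^n\normch(\xi),
\]
with the last equality coming from Theorem~\ref{thm: Hatcher's example}(2). Now I would invoke the disk-bundle case of Theorem~\ref{intro: key result} (quoted from \cite{Second}): for linear disk bundles the isomorphism $\pi_0\std{B}{\d_0 B}(D^N(\xi_0))\otimes\RR\cong H^{4\bullet}(B,\d_0B;\RR)$ is given, up to Poincar\'e--Lefschetz duality, by the relative higher $\IK$-torsion. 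Hence our exotic smooth structure corresponds to the class $(-1)^n\normch(\xi)$.

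Finally, I would let $\xi$ vary. By Remark~\ref{rem:ch(xi) span H4k(B,d0)}, the Chern characters $ch_{4k}(\xi\otimes\CC)$ already span $\bigoplus_{0<k\le q/4}H^{4k}(B,\d_0B;\RR)$, and since $\normch_{4k}(\xi)=(-1)^k\z(2k+1)\tfrac12 ch_{4k}(\xi\otimes\CC)$ is a nonzero scalar multiple of $ch_{4k}$ in each degree, the normalised Chern characters span as well. Therefore the classes $[D^N(\xi_0)\oplus E^{n,m}(\xi)]$ span $\pi_0\std{B}{\d_0 B}(D^N(\xi_0))\otimes\RR$, which is the required conclusion. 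The only non-routine ingredient is the identification of this smooth-structure vector space with $H^{4\bullet}(B,\d_0B;\RR)$ via the relative $\IK$-torsion; modulo that citation the argument reduces to stability of $\t^\IK$, Hatcher's torsion formula, and the rational equivalence $G/O\simeq_\QQ BO$.
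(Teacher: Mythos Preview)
Your argument is correct and follows essentially the same route as the paper: the corollary is stated without a separate proof, and the preceding paragraph (Whitney sum with $E^{n,m}(\xi)$, stability of $\t^\IK$) together with Remark~\ref{rem:ch(xi) span H4k(B,d0)} and the normalization result from \cite{Second} constitute the intended justification, which you have spelled out.

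One small correction: your parenthetical ``which holds automatically once $\dim\xi>q$'' is not right---the vanishing of $J(\xi)$ is not a consequence of the dimension of $\xi$.  Rather, the bundles $\xi$ used in Hatcher's construction arise from maps $B/\d_0B\to G/O$ and hence satisfy $J(\xi)=0$ by construction (this is the content of subsection~\ref{subsecA11}).  Since Remark~\ref{rem:ch(xi) span H4k(B,d0)} already asserts that the Chern characters of \emph{these} bundles span, your spanning argument is unaffected; just drop or correct that parenthetical.
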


Now we give the unstable version of the last corollary and use it to define ``Hatcher handles''. Suppose that $(B,\d_0 B)$ is a manifold pair as before with $\dim B=q$. Let $\xi,\eta$ be vector bundles over $B$ of dimension $n,m$ so that $\xi$ is trivial over $\d_0B$ and $J(\xi)=0\in J(B/\d_0B)$. As in Lemma \ref{embedding lemma} we have the following.

\begin{lem}\label{twisted embedding lemma}
If $m>n>q$ then there is a smooth fiberwise embedding of pairs:
\[
	j:(D^n(\xi),S^{n-1}(\xi))\to (D^n,S^{n-1})\times D^m(\eta)
\]
over $B$ which is a standard linear embedding over $\d_0B$ and which is transverse to $S^{n-1}\times D^m(\eta)$. Furthermore, if $m\ge q+3$ then this fiberwise embedding is unique up to fiberwise isotopy.
\end{lem}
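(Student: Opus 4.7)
The plan is to relativize the proof of Lemma \ref{embedding lemma} to the case where the ambient bundle over $B$ is the twisted disk bundle $(D^n,S^{n-1})\times D^m(\eta)$ rather than a product $D^n\times D^m\times B$. The key connectivity input, \cite[Thm 6.5]{I:Stability}, is purely fiberwise and holds for any codimension $m$ embedding into a manifold-with-boundary pair of the appropriate connectivity. Taking $s=n-1$ as the connectivity of the pair $(D^n\times D^m(\eta_b),S^{n-1}\times D^m(\eta_b))$, the estimate
\[
	c = m-n-1+\min(n-1,n,m-2,n+m-4)
\]
is unchanged from the untwisted case. The arithmetic used in the proof of Lemma \ref{embedding lemma} then yields $c\ge q$ under $m>n>q$, and $c\ge q+1$ under $m\ge q+3$.

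I would then package this fiberwise estimate into a parametrized statement. Let $\cE\to B$ be the Serre fibration whose fiber over $b\in B$ is the space of smooth embeddings of pairs $(D^n(\xi_b),S^{n-1}(\xi_b))\into (D^n\times D^m(\eta_b),S^{n-1}\times D^m(\eta_b))$ transverse to $S^{n-1}\times D^m(\eta_b)$, and let $\cM\to B$ be the analogous fibration of pair mapping spaces. By the bound above, the inclusion $\cE\into\cM$ is fiberwise $c$-connected. To construct $j$ as a section of $\cE$, I would first build a section of $\cM$ extending the prescribed standard linear embedding on $\d_0B$, using the zero section of $\eta$ together with any fiberwise map $D^n(\xi)\to D^n\times B$ of pairs that restricts to the identity over $\d_0B$; such a map exists because its target factor $D^n\times B$ is a product, because the problem is obstruction-free at the level of continuous maps of pairs into contractible disk fibers, and because $\xi$ is already trivialized over $\d_0B$. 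Standard obstruction theory over a CW decomposition of $(B,\d_0B)$ then lifts this section of $\cM$ to a section of $\cE$, with obstructions killed by $c\ge q$. For the uniqueness statement under $m\ge q+3$, the same argument applied to the fiberwise space of isotopies between two given sections yields the required fiberwise isotopy, using the sharper bound $c\ge q+1$.

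The main technical point will be producing the initial section of $\cM$ compatible with the boundary condition on $\d_0B$; this is a map-of-pairs extension problem that becomes straightforward once one trivializes $D^n(\xi)$ over a collar neighborhood of $\d_0B$ and observes that $D^n\times D^m(\eta_b)$ deformation retracts onto its $D^n$-factor along the zero section of $\eta$. Once that section is in hand, the rest of the argument is mechanical and runs exactly parallel to the proof of Lemma \ref{embedding lemma}, with no new ingredients beyond \cite[Thm 6.5]{I:Stability} and routine bundle-theoretic manipulations.
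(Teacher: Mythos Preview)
Your proposal is correct and follows essentially the same approach as the paper. In fact the paper gives no separate proof of this lemma at all: it simply introduces the statement with ``As in Lemma \ref{embedding lemma} we have the following,'' relying on the reader to transport the untwisted argument verbatim. You have done precisely that, invoking the same connectivity estimate from \cite[Thm 6.5]{I:Stability} with $s=n-1$ and the same arithmetic, and your additional care in setting up the section-lifting problem over $(B,\d_0B)$ and constructing the initial section of the mapping-space fibration just makes explicit what the paper leaves implicit.
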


Let $\eta_0$ be the unique $m$-plane bundle over $B$ so that $\xi\oplus\eta_0\cong\e^n\oplus\eta$ where $\e^n$ is the trivial $n$-plane bundle over $B$. Then the embedding given by the lemma thickens to a codimension 0 fiberwise embedding
\[
	(D(j),S(j)):(D^n(\xi),S^{n-1}(\xi))\oplus D^m(\eta_0)\hookrightarrow (D^n,S^{n-1})\times D^m(\eta)
\]
which is a standard linear embedding over $\d_0B$. Let $E^{n,m}(\xi,\eta)$ denote the $n+m$ disk bundle over $B$ given by
\[
	E^{n,m}(\xi,\eta)=
	D^n(\xi)\oplus D^m(\eta_0)
		\cup_{S(j)} S^{n-1}\times I\times D^{m}(\eta)
\]
with corners rounded. Up to fiberwise diffeomorphism, this is independent of the choice of $g$ if $m\ge q+3$. As before we have a fiberwise embedding $F(j):E^{n,m}(\xi,\eta)\into D^n\times D^m(\eta)$ and we can define the positive and negative suspensions of $E^{n,m}(\xi)$ to be
\[
	\s_+E^{n,m}(\xi,\eta)=E^{n,m}(\xi,\eta)\times I
\]
which is fiberwise diffeomorphic to $E^{n,m+1}(\xi,\eta)$ after corners are rounded and
\[
	\s_-E^{n,m}(\xi,\eta)=D^n\times D^m(\eta)\times [-1,0]\cup_{F(j)\times 0} E^{n,m}(\xi,\eta)\times I\cup_{F(j)\times 1} D^n\times D^m(\eta)\times[1,2]
\]
which is a model for $E^{n+1,m}(\xi,\eta)$.
As before, the Framing Principle implies that the higher \IK-torsion of this bundle is the normalized Chern character (Def. \ref{def: normalized Chern character}) of $\xi$:

\begin{thm}\label{torsion of twisted Hatcher disk bundle}
$E^{n,m}(\xi,\eta)$ is a smooth $n+m$ disk bundle over $B$ which is fiberwise diffeomorphic to the linear disk bundle $D^{n+m}(\eta)$ over $\d_0B$ and fiberwise homeomorphic to $D^{n+m}(\eta)$ over $B$. Furthermore,
\[
	\t^{\IK}(E^{n,m}(\xi,\eta))=(-1)^{n}\normch(\xi)\in H^{4\bullet}(B,\d_0B)
\]
\end{thm}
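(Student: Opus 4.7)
The plan is to mirror the proof of Theorem \ref{thm: Hatcher's example}, with $\eta$ replacing the trivial thickening factor. Over $\d_0B$ the bundle $\xi$ is trivial, and the isomorphism $\xi\oplus\eta_0\cong\e^n\oplus\eta$ forces $\eta_0\cong\eta$ there. By the uniqueness clause of Lemma \ref{twisted embedding lemma}, we may take $j$ to be the standard linear embedding over $\d_0B$, so the attaching map $S(j)$ is linear and $E^{n,m}(\xi,\eta)$ is smoothly fiberwise diffeomorphic to the linear disk bundle $D^{n+m}(\eta)$ over $\d_0B$.

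For topological triviality over $B$, note that $F(j)\colon E^{n,m}(\xi,\eta)\hookrightarrow D_2^n\times D^m(\eta)$ realizes $E^{n,m}(\xi,\eta)$ as a codimension-$0$ $(n+m)$-disk sub-bundle of a linear disk bundle, with fiberwise boundary a topological $(n+m-1)$-sphere. Applying the fiberwise Alexander trick exactly as in the proof of Theorem \ref{thm: Hatcher's example}(1) then produces the desired fiberwise homeomorphism with $D^{n+m}(\eta)$ over $B$.

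For the higher $\IK$-torsion calculation, the plan is to exhibit a fiberwise handlebody decomposition of $E^{n,m}(\xi,\eta)$ and apply Theorem \ref{higher torsion of fiberwise handlebody}. The base piece $S^{n-1}\times I\times D^m(\eta)$ is built fiberwise from a basepoint $0$-handle and a single $(n-1)$-handle, with cores a point and a standard $D^{n-1}\subset S^{n-1}\times I$, and hence with trivial core bundles over $B$. Onto this is attached the single fiberwise $n$-handle $D^n(\xi)\oplus D^m(\eta_0)$ along $S(j)$, with core bundle $\xi$. Since $\normch$ of a trivial bundle vanishes, the handlebody formula yields
\[
\t^\IK(E^{n,m}(\xi,\eta)) = \sum_i(-1)^{n_i}\normch(\xi_i) = (-1)^n\normch(\xi).
\]
The main conceptual point is that the twist by $\eta$ only affects the normal thickenings $\eta_i$ of the handles and not their core bundles $\xi_i$, so it does not enter the normalized Chern character; this is precisely what makes the handlebody theorem the right tool, and there is no substantive obstacle beyond verifying the decomposition.
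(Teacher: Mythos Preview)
Your proof is correct and follows essentially the same approach as the paper, which for this theorem simply says ``As before, the Framing Principle implies\ldots'' and relies on the reader to transport the argument of Theorem~\ref{thm: Hatcher's example} to the twisted setting; you have carried this out explicitly. One minor remark: you need not invoke the uniqueness clause of Lemma~\ref{twisted embedding lemma} to arrange that $j$ is standard linear over $\d_0B$, since that is already part of the existence statement of the lemma.
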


\begin{rem} This theorem can be stated as the commutativity of the following diagram:
\[
\xymatrix{
&G(B,\d_0B) \ar[rd]^{(-1)^n\normch}\ar[ld]_{E^n(-,\eta)} \\
\pi_0\std{B}{\d_0}(D(\eta))\ar[rr]^{\t^\IK}&& H^{4\bullet}(B,\d_0B)
	}
\]
where $G(B,\d_0B)$ is the group of all homotopy classes of pointed maps $\xi:B/\d_0B\to G/O$. Here $E^n(-,\eta)$ is the map which sends $\xi$ to the direct limit of $E^{n,m}(\xi,\eta)$ as $m$ goes to $\infty$.
\end{rem}

Since the torsion of a linear disk bundle is trivial, the torsion of the disk bundle $E^{n,m}(\xi,\eta)$ is equal to the torsion of the $h$-cobordism bundle given by deleting a neighborhood of a section. The fiberwise boundary of $E^{n,m}(\xi,\eta)$ is a smooth $n+m-1$ dimensional sphere bundle over $B$ which is fiberwise tangentially homeomorphic to the linear sphere bundle $S^{n+m-1}(\eta)$.


\begin{cor}\label{cor: torsion of twisted Hatcher sphere bundle}
Suppose that $n+m-1$ is odd. Then the vertical boundary $\d\vv E^{n,m}(\xi,\eta)$ of this disk bundle is a smooth sphere bundle which is fiberwise tangentially homeomorphic to the linear sphere bundle $S^{m+n-1}(\eta)$ and fiberwise diffeomorphic to this bundle over $\d_0B$ and the difference torsion is twice the normalized Chern character of $\xi$:
\[
	\t^{\IK}(\d\vv E^{n,m}(\xi,\eta),S^{n+m-1}(\eta))=(-1)^{n}2\normch(\xi)\in H^{4\bullet}(B,\d_0B)
\]
In particular, assuming that $\xi$ is rationally nontrivial, this gives an exotic smooth structure on $S^{n+m-1}(\eta)$.
\end{cor}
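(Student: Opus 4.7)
The plan is to reduce the sphere bundle computation to the already--computed disk bundle torsion of Theorem \ref{torsion of twisted Hatcher disk bundle} by applying the doubling identity of Remark \ref{rem:extension to not closed fibers} twice and invoking the fact (Theorem \ref{even torsion is a tangential invariant}) that higher torsion of closed even dimensional fiber bundles depends only on the fiberwise tangential homeomorphism type.

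First I would dispose of the topological assertions. The fiberwise homeomorphism $E^{n,m}(\xi,\eta)\approx D^n\times D^m(\eta)$ produced in Theorem \ref{torsion of twisted Hatcher disk bundle} restricts on vertical boundaries to a fiberwise homeomorphism $\d\vv E^{n,m}(\xi,\eta)\approx S^{m+n-1}(\eta)$; the vertical tangent bundle of each fiberwise boundary differs from the ambient vertical tangent bundle restricted to the boundary only by the trivial inward normal line bundle, so tangentiality of the ambient homeomorphism (built into Hatcher's construction) passes to the boundary. Over $\d_0 B$ the ambient disk bundles are fiberwise diffeomorphic by construction, hence so are their vertical boundaries, which is what places the relative torsion in $H^{4\bullet}(B,\d_0B)$.

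Next I would write the doubling formula
\[
\t^\IK(N)=\tfrac12\t^\IK(DN)+\tfrac12\t^\IK(\d\vv N)
\]
of Remark \ref{rem:extension to not closed fibers} for both $N=E^{n,m}(\xi,\eta)$ and its linear model $N_0=D^n\times D^m(\eta)$. Stability of $\t^\IK$ (Theorem \ref{uniqueness of exotic torsion}) gives $\t^\IK(N_0)=0$, since $N_0$ is a linear disk bundle over $B$. Subtracting the two identities and using $\d\vv N_0=S^{m+n-1}(\eta)$ yields
\[
\t^\IK(E^{n,m}(\xi,\eta))=\tfrac12\bigl(\t^\IK(DN)-\t^\IK(DN_0)\bigr)+\tfrac12\t^\IK\bigl(\d\vv N,\,S^{m+n-1}(\eta)\bigr).
\]

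The crucial step is that the hypothesis "$n+m-1$ odd" forces $n+m$ to be even, so $DN$ and $DN_0$ are closed unipotent bundles with even dimensional fibers. The fiberwise tangential homeomorphism $N\approx N_0$ doubles to a fiberwise tangential homeomorphism $DN\approx DN_0$, and Theorem \ref{even torsion is a tangential invariant} then forces $\t^\IK(DN)=\t^\IK(DN_0)$, killing the first term. The surviving identity $\t^\IK(\d\vv N,S^{m+n-1}(\eta))=2\t^\IK(E^{n,m}(\xi,\eta))$ combined with Theorem \ref{torsion of twisted Hatcher disk bundle} delivers the stated value $(-1)^n 2\normch(\xi)$, and the final claim that this yields an exotic smooth structure whenever $\xi$ is rationally nontrivial is then automatic, since a nonzero difference torsion obstructs a fiberwise tangential diffeomorphism. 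The main obstacle I anticipate is the tangentiality bookkeeping: one must confirm that the Alexander--trick trivialization in Hatcher's construction refines to a fiberwise tangential homeomorphism, so that it both doubles and restricts to boundaries as a tangential homeomorphism suitable for input to Theorem \ref{even torsion is a tangential invariant} and for the definition of the difference torsion.
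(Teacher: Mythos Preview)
Your proposal is correct and follows essentially the same route as the paper: apply the doubling identity of Remark \ref{rem:extension to not closed fibers} to both $E^{n,m}(\xi,\eta)$ and the linear model $D^n\times D^m(\eta)$, use stability to kill the torsion of the linear disk bundle, and invoke Theorem \ref{even torsion is a tangential invariant} on the even dimensional doubles to eliminate the $DN$ terms. The paper presents the same computation with slightly different bookkeeping (it explicitly identifies $DN_0$ with $S^{n+m}(\eta)$ and computes each side separately rather than subtracting the two doubling identities as you do), but the content is identical.
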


\begin{proof} For oriented sphere bundles, the absolute torsion is defined and the difference torsion is just the difference:
\[
	\t^\IK(\d\vv E^{n,m}(\xi,\eta),S^{n+m-1}(\eta))=\t^\IK(\d\vv E^{n,m}(\xi,\eta))-\t^\IK(S^{n+m-1}(\eta))
\]
Each term can be computed using the equation
\[
	\t(E)=\tfrac12\t(\d\vv E)+\tfrac12\t(DE)
\]
where $DE$ is the vertical double of $E$. (See Remark \ref{rem:extension to not closed fibers}.) If we take $E$ to be the linear disk bundle $E=D^{n+m}(\eta)$, then the triviality of the Igusa-Klein torsion for linear disk bundles implies that
\[
	\t^\IK(S^{n+m-1}(\eta))=-\t^\IK(S^{n+m}(\eta))
\]
If we take $E=E^{n,m}(\xi,\eta)$, then the fiberwise double $DE$, having closed even dimensional manifold fibers, has the same higher torsion as the linear sphere bundle $S^{n+m}(\eta)$ (by Theorem \ref{even torsion is a tangential invariant} since $n+m$ is even):
\[
	\t^\IK(\d\vv E^{n,m}(\xi,\eta))= 
	2\t^\IK(E^{n,m}(\xi,\eta))-\t^\IK(S^{n+m}(\eta))
\]
The relative torsion is the difference:
\[
	\t^\IK(\d\vv E^{n,m}(\xi,\eta))-\t^\IK(S^{n+m-1}(\eta))= 
	2\t^\IK(E^{n,m}(\xi,\eta))=(-1)^n2\normch(\xi)
\]
by Theorem \ref{torsion of twisted Hatcher disk bundle} above.
\end{proof}



\subsection{Hatcher handles}\label{ss: Hatcher handles}\label{subsecA23}

Suppose that $p:M\to B$ is a smooth manifold bundle whose fiber dimension is $N=n+m$ where $m>n>q$. Let $s:B\to M$ be a smooth section of $p$ with image in the fiberwise interior of $M$. Since $m=N-n>q+1$, the space of $n$ frames in $\RR^N$ is $q+1$-connected. So there exists a smooth fiberwise embedding $f:D^{n}\times B\to M$ equal to $s$ along the zero section and $f$ is uniquely determined up to isotopy by $s$. Let $\eta$ be the vertical normal bundle to the image of $f$ in $M$. This is the unique $m$ plane bundle over $B$ which is stably isomorphic to the pull back along $s$ of the vertical tangent bundle of $M$. Then $f$ extends to a fiberwise embedding
\begin{equation}\label{eq:D(s)}
	D(s):D^{n}\times D^{m}(\eta)\into M
\end{equation}
whose image is a tubular neighborhood of the image of the section $s$ and $D(s)$ is determined up to isotopy by $s$. We will use this embedding $D(s)$ to attach positively and negatively suspended Hatcher disk bundles to the top $M\times 1$ of the bundle $M\times I\to B$. We call these \emph{positive} and \emph{negative Hatcher handles}. We will also show that, when the negative Hatcher handle is attached on top of the positive Hatcher handle, they form the Arc de Triomphe which cancels.

To visualize these three situations, it may help to think of the positive Hatcher handle as two balls attached together on a string with one ball attached to the ground. This configuration is topologically contractible to its attachment point on the ground but not smoothly (Figure \ref{fig: positive Hatcher handle}). The negative Hatcher handle resembles the handle on a briefcase with a flexible membrane filling in the ``hole''. This is also topologically contractible to the base (the briefcase) but not smoothly (Figure \ref{fig: negative Hatcher handle}). The Arc de Triomphe resembles the hook on a coat hanger together with a semicircular membrane attached only to the curved part of the hook. This is smoothly contractible since the membrane smoothly deforms into the metal part and then the metal hook smoothly contracts to the base (Figure \ref{fig: cancelling Hatcher handles}). The term ``Arc de Triomphe'' may be misleading since this structure has one end up in the air and only the other end attached to the ground along a ``stem''. The Arc de Triomphe and negative Hatcher handles are diffeomorphic but they have different properties since the former is attached trivially and the latter is attached nontrivially to the base.


\subsubsection{Positive Hatcher handles}

Let $h_0:D_0^n\into S^{n-1}\times I$ be a fixed smooth embedding where $D_0^n=D^n$ is a copy of the standard $n$-disk. Taking the product with $D^m(\eta)$ we get a fiberwise embedding of $D_0^n\times D^m(\eta)$ into $E^{n,m}(\xi,\eta)$:
\[
	h=h_0\times id_{D^m(\eta)}:D_0^n\times D^m(\eta)\into S^{n-1}\times I\times D^m(\eta)\subseteq E^{n,m}(\xi,\eta)
\]
We define the {\bf positive Hatcher handle} to be the pair $(B^{n,m}(\xi,\eta),\d_0B^{n,m}(\xi,\eta))$ where
\[
	B^{n,m}(\xi,\eta)=D_0^n\times D^m(\eta)\times I\cup_{h\times1} E^{n,m}(\xi,\eta)\times [1,2]
\]
and $\d_0B^{n,m}(\xi,\eta)=D_0^n\times D^m(\eta)\times0$. We can attach $B^{n,m}(\xi,\eta)$ to $M\times I$ along any fiberwise embedding $D(s):\d_0B^{n,m}(\xi,\eta)\to M\times 1$ where $s:B\to M$ is a smooth section of $M$ as in \eqref{eq:D(s)} above. The result will be denoted:
\[
	E_+^{n,m}(M,s,\xi)=M\times I\cup_{D(s)}B^{n,m}(\xi,\eta)
\]

Since the bundle pair $(B^{n,m}(\xi,\eta),\d_0B^{n,m}(\xi,\eta))$ is fiberwise homeomorphic to the disk bundle pair $D^n\times D^m(\eta)\times (I,0)$, the bundle $E_+^{n,m}(M,s,\xi)$ is fiberwise homeomorphic to the bundle $M\times I$. However, $E_+^{n,m}(M,s,\xi)$ is a smooth bundle (when corners are rounded) whose fibers are $h$-cobordisms.

\begin{figure}[ht]
\begin{center}
%
{
\setlength{\unitlength}{3cm}
{\mbox{
\begin{picture}(3.5,1.5)
      \thicklines
\put(.5,.5){\line(1,0){2}
}
\put(2.95,.5){\line(1,0){.45}
}
\put(0,0){
\qbezier(0,0)(.25,.25)(.5,.5)
\line(1,0){3}
}
\put(3,0){
\qbezier(0,0)(.2,.25)(.4,.5)
}
\put(.5,.1){$M\times 1$
}
\put(1.2,0){  \put(1.1,.7){
  \line(1,0){.5}
  }
  \put(1.1,.7){
  \line(0,1){.5}   \qbezier(0,.5)(.1,.6)(.2,.7)   
  }
  \put(1.6,1.2){
   \line(0,-1){.5} 
   \qbezier(0,0)(.1,.1)(.2,.2)
  }
  \put(1.1,1.2){
   \line(1,0){.5} 
  }
  \put(1.3,1.4){
   \line(1,0){.5} 
  }
    \put(1.8,.9){
  \line(0,1){.5}   \qbezier(0,0)(-.1,-.1)(-.2,-.2)   
  }}
  \put(1.1,.7){
  \line(1,0){.5}
  }
  \put(1.1,.7){
  \line(0,1){.5}   \qbezier(0,.5)(.1,.6)(.2,.7)   
  }
  \put(1.6,1.2){
   \line(0,-1){.5} 
   \qbezier(0,0)(.1,.1)(.2,.2)
  }
  \put(1.1,1.2){
   \line(1,0){.5} 
  }
  \put(1.3,1.4){
   \line(1,0){.5} \qbezier(0,0)(0,-.01)(0,-.06)
  }
  \put(1.7,1.25){
  \qbezier(0,0)(.3,-.1)(.75,0.05)
  \line(0,-1){.5}
  }
  \put(1.8,1.35){
  \qbezier(0,0)(.2,-.1)(.75,0.05)
  }
  \put(1.7,.75){
  \qbezier(0,0)(.2,-.1)(.65,0.05) 
  }
     \put(1.65,.7){
   \qbezier(0,0)(.02,.02)(.05,.05)
  }
  \put(2.5,.3){\line(1,0){.3}}
  \put(2.5,.3){\line(0,1){.4}}
  \put(2.8,.3){\line(0,1){.4}}
  \put(2.8,.3){
     \qbezier(0,0)(.1,.1)(.15,.15)
  }
  \put(2.9,.45){
  \line(0,1){.35}
  }
  \put(-.1,1){
  $E^{n,m}(\xi,\eta)\times [1,2]$
  }
  \put(3,.6){
  $D_0^n\times D^m(\eta)\times I$
  }
\end{picture}}
}}
\caption{(Positive Hatcher handle) The positive suspension $\s_+E^{n,m}(\xi,\eta)$ is attached to the top $M\times 1$ of $M\times I$ by the ``stem'' $D_0^n\times D^m(\eta)\times I$.}
\label{fig: positive Hatcher handle}
\end{center}
\end{figure}
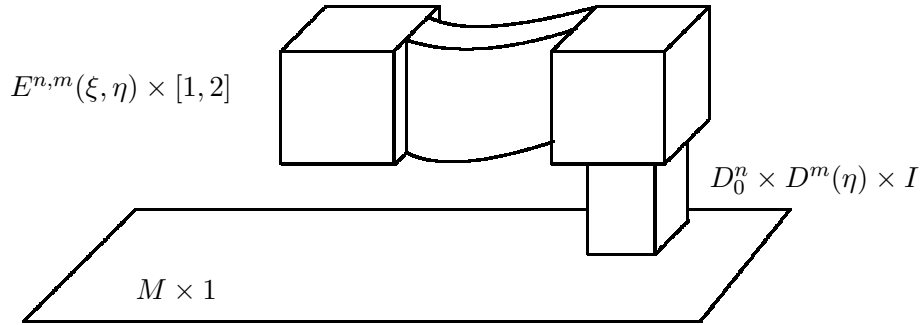

\begin{thm}
Let $T$ be a closed fiberwise tubular neighborhood of $s(B)$ in $M$. Then there is a fiberwise homeomorphism $M\times I\to E_+^{n,m}(M,s,\xi)$ which is the identity (and thus a diffeomorphism) on $M\times 0$ and a diffeomorphism on the closure of $(M-T)\times I$. Furthermore the difference torsion is the same as the IK-torsion of $E^{n,m}(\xi,\eta)$:
\[
	\t^\IK(E_+^{n,m}(M,s,\xi),M\times I) =\t^\IK(E^{n,m}(\xi,\eta))=(-1)^{n}\normch(\xi)\in H^{4\bullet}(B,\d_0B)
\]
\end{thm}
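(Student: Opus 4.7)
The plan is to treat the topological and $\IK$-torsion claims separately, using Theorem \ref{torsion of twisted Hatcher disk bundle} for the former and the relative additivity of Remark \ref{rem:handlebody lemma} for the latter.

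For the fiberwise homeomorphism $E_+^{n,m}(M,s,\xi) \to M \times I$, I would first construct a fiberwise homeomorphism of the positive Hatcher handle $B^{n,m}(\xi,\eta)$ onto $D_0^n \times D^m(\eta) \times I$ rel.\ its attaching disk $D_0^n \times D^m(\eta) \times 0$. By Theorem \ref{torsion of twisted Hatcher disk bundle}, $E^{n,m}(\xi,\eta)$ is fiberwise homeomorphic to the linear disk bundle $D^{n+m}(\eta)$; since $E^{n,m}(\xi,\eta)$ literally contains the product piece $S^{n-1} \times I \times D^m(\eta)$ as a subbundle and the attaching map $h$ of the stem factors through this piece, one can arrange this Alexander-type homeomorphism to be the identity on a collar neighborhood of $h(D_0^n \times D^m(\eta))$. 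Taking the product with $[1,2]$ and gluing to $D_0^n \times D^m(\eta) \times I$ along $h \times 1$, then rounding corners, yields the desired identification. Extending by the identity on $M \times I$ via the tubular embedding $D(s)$, whose image lies in $T \times 1$, produces a fiberwise homeomorphism $E_+^{n,m}(M,s,\xi) \to M \times I$ that is the identity on $M \times 0$ and a diffeomorphism on the closure of $(M \setminus T) \times I$.

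For the torsion formula, I would view $E_+^{n,m}(M,s,\xi)$ as a fiberwise handlebody built on top of $M \times I$. The stem $D_0^n \times D^m(\eta) \times I$ is glued along a codimension-$0$ disk bundle in $M \times 1$; this merely enlarges the fiberwise collar of $M \times I$ and contributes nothing to relative $\IK$-torsion. The remaining piece $E^{n,m}(\xi,\eta) \times [1,2]$ inherits from Section \ref{subsecA22} the handle structure consisting of the trivial sphere-bundle collar $S^{n-1} \times I \times D^m(\eta) \times [1,2]$, attached first, followed by the single $n$-handle $(D^n(\xi) \oplus D^m(\eta_0)) \times [1,2]$. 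Applying the relative additivity statement of Remark \ref{rem:handlebody lemma} to this $n$-handle yields
\[
\t^\IK(E_+^{n,m}(M,s,\xi), M \times I) = (-1)^n \normch(\xi),
\]
which by Theorem \ref{torsion of twisted Hatcher disk bundle} equals $\t^\IK(E^{n,m}(\xi,\eta))$.

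The main technical point is the refinement used in the first step: arranging the fiberwise homeomorphism of Theorem \ref{torsion of twisted Hatcher disk bundle} to be the identity on a neighborhood of the stem's attaching region. This is a strengthening of, rather than an extension to, the original statement, and follows from an inspection of its proof via the Alexander trick, which can be performed radially from a basepoint in the $n$-handle so as to fix a collar of the attaching disk. Granting this, the rest of the argument is routine: the handle structure of $B^{n,m}(\xi,\eta)$ is read off directly from its definition, and the torsion formula is a single application of Remark \ref{rem:handlebody lemma}.
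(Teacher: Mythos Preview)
The paper does not give a separate proof of this theorem. The homeomorphism claim is handled in the sentence immediately preceding the theorem statement (``Since the bundle pair $(B^{n,m}(\xi,\eta),\d_0B^{n,m}(\xi,\eta))$ is fiberwise homeomorphic to the disk bundle pair $D^n\times D^m(\eta)\times (I,0)$, the bundle $E_+^{n,m}(M,s,\xi)$ is fiberwise homeomorphic to the bundle $M\times I$''), and the torsion formula is stated without further argument, evidently because it follows the same pattern as Theorem~\ref{thm: Hatcher's example} and the lemma on negative Hatcher handles. Your proposal is therefore filling in exactly what the paper leaves implicit, and it does so along the intended lines: Alexander trick for the topological part, relative handlebody formula for the torsion part.

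One point deserves tightening. You describe the handle structure of $B^{n,m}(\xi,\eta)$ as ``the trivial sphere-bundle collar $S^{n-1}\times I\times D^m(\eta)\times[1,2]$, attached first, followed by the single $n$-handle'', and then apply Remark~\ref{rem:handlebody lemma} only to the $n$-handle. But the sphere collar, once attached along the disk $h(D_0^n)\times D^m(\eta)$, is not a trivial collar extension: it is an $(n-1)$-handle with trivial core bundle $\e^{n-1}$ (this is exactly the index $n-1$ critical point $x_t$ appearing in the proof of Theorem~\ref{torsion of immersed Hatcher}). The relative handlebody formula must be applied to \emph{both} handles, giving
\[
(-1)^{n-1}\normch(\e^{n-1})+(-1)^n\normch(\xi)=0+(-1)^n\normch(\xi),
\]
so the conclusion is unchanged, but the step you skipped is not automatic. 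With this correction your argument is complete and matches the paper's approach.
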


\begin{rem} This theorem can be viewed as the commutativity of the diagram:
\[
\xymatrix{
&G(B,\d_0B) \ar[rd]^{(-1)^n\normch}\ar[ld]_{E^n(-,\eta)}\ar[d]^(.65){E_+^n(M,s,-)} \\
\pi_0\std{B}{\d_0}(D(\eta))\ar[r]_{s_\ast}\ar@/_2pc/[rr]^{\t^\IK}&  \pi_0\std{B}{\d_0}(M)\ar[r]_(.4){\t^\IK}& H^{4\bullet}(B,\d_0B)
	}
\]
\end{rem}

Let $M'=\d_1E_+^{n,m}(M,s,\xi)$ be the top boundary of the $h$-cobordism bundle $E_+^{n,m}(M,s,\xi)$. 

\begin{cor} $M'$ is fiberwise tangentially homeomorphic to $M$ and, if the fiber dimension $N=n+m$ of $M'$ is odd, then the relative \IK-torsion is equal to twice the normalized Chern character of $\xi$:
\[
	\t^\IK(M',M)=(-1)^{n}2\normch(\xi)\in H^{4\bullet}(B,\d_0B)
\]
\end{cor}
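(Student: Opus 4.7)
The tangential-homeomorphism assertion is immediate from the theorem just above: the fiberwise homeomorphism $M\times I\to E_+^{n,m}(M,s,\xi)$ restricts on $M\times 1$ to a fiberwise homeomorphism $M\to M'=\d_1E_+^{n,m}(M,s,\xi)$ that is a diffeomorphism outside the tubular neighborhood $T$ and agrees with the identity along $\d\vv M$; combined with the ambient linearization provided by the $h$-cobordism structure on $E_+^{n,m}(M,s,\xi)$, this gives the required linearization of the vertical tangent microbundle. In particular $\d\vv M'=\d\vv M$, so the relative \IK-torsion $\t^\IK(M',M)$ is defined.

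For the torsion computation, set $W:=E_+^{n,m}(M,s,\xi)$ and $W_0:=M\times I$. Both are smooth bundles over $B$ with fibers of dimension $N+1$, which is \emph{even} by the hypothesis that $N$ is odd, and they are fiberwise tangentially homeomorphic and agree along their outer vertical boundary. Up to fiberwise diffeomorphism (after rounding corners and collapsing the collar $\d\vv M\times I$),
\[
\d\vv W=M\cup_{\d\vv M}M'\qquad\text{and}\qquad \d\vv W_0=M\cup_{\d\vv M}M=DM.
\]
Applying the extended definition in Remark \ref{rem:extension to not closed fibers} to $W$ and to $W_0$ and subtracting gives
\[
\t^\IK(W,W_0)=\tfrac12\t^\IK(DW,DW_0)+\tfrac12\t^\IK(\d\vv W,\d\vv W_0).
\]
The fiberwise doubles $DW$ and $DW_0$ are closed smooth bundles with even-dimensional fibers and are fiberwise tangentially homeomorphic, so Theorem \ref{even torsion is a tangential invariant} forces $\t^\IK(DW,DW_0)=0$.

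Combining the additivity axiom applied to the two displayed decompositions of $\d\vv W$ and $\d\vv W_0$ with a second use of Remark \ref{rem:extension to not closed fibers} (now applied to $M$ and $M'$, which have the same fiberwise boundary) yields
\[
\t^\IK(\d\vv W,\d\vv W_0)=\tfrac12\t^\IK(DM',DM)=\t^\IK(M',M).
\]
Feeding this back into the previous display and invoking the torsion identity $\t^\IK(W,W_0)=(-1)^n\normch(\xi)$ from the preceding theorem,
\[
\t^\IK(M',M)=2\t^\IK(W,W_0)=(-1)^{n}2\normch(\xi),
\]
as required.

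The only nontrivial point is the identification of $\d\vv W$ with $M\cup_{\d\vv M}M'$ up to fiberwise diffeomorphism, together with the compatibility of orientations needed to apply the additivity axiom to this decomposition; once that is checked, the remaining manipulations are routine bookkeeping with the doubling formula of Remark \ref{rem:extension to not closed fibers}, which pass cleanly to relative torsions because each bundle entering the computation is fiberwise tangentially homeomorphic to its counterpart.
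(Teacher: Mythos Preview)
Your argument is correct and follows the same doubling strategy the paper uses in the closely analogous Corollary~\ref{cor: torsion of twisted Hatcher sphere bundle} and in equations (1)--(4) of the proof of Theorem~\ref{first main theorem}: apply $\t(E)=\tfrac12\t(DE)+\tfrac12\t(\d\vv E)$, kill the double term using Theorem~\ref{even torsion is a tangential invariant} (even-dimensional closed fibers), and read off the factor of $2$. The paper in fact states this corollary without proof, and those two places are where the intended argument is spelled out---but only for disk and sphere bundles, where absolute torsions exist because the bundles are unipotent.

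Your contribution is to run the same bookkeeping with \emph{relative} torsions throughout, which is necessary here since $M$ is not assumed unipotent. That adaptation is the right move, but your justification ``pass cleanly to relative torsions because each bundle entering the computation is fiberwise tangentially homeomorphic to its counterpart'' is doing real work and deserves a sentence more. Specifically, the step $\t^\IK(DW,DW_0)=0$ invokes Theorem~\ref{even torsion is a tangential invariant}, which as stated is about absolute torsion of unipotent bundles; you are using its relative form (any exotic smooth structure on a bundle with closed even-dimensional fibers has vanishing relative \IK-torsion), which the paper asserts just before the Rigidity Conjecture and attributes to \cite{I:Axioms0}. Likewise, the relative additivity you use to get $\t^\IK(\d\vv W,\d\vv W_0)=\tfrac12\t^\IK(DM',DM)$ is the difference of two instances of the Additivity Axiom and is unproblematic once one accepts that relative \IK-torsion is additive for decompositions that match under the tangential homeomorphism. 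With those two points made explicit, your proof is complete and matches the paper's approach.
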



\subsubsection{Negative Hatcher handles}

The {\bf negative Hatcher handle} is defined to be the pair $(A^{n,m}(\xi,\eta),\d_0A^{n,m}(\xi,\eta))$ where
\[
	A^{n,m}(\xi,\eta)=E^{n,m}(\xi,\eta)\times I\cup_{F(j)\times 1}
	D_2^n\times D^m(\eta)\times [1,2]
\]
and $\d_0A^{n,m}(\xi,\eta)=E^{n,m}(\xi,\eta)\times 0$. When we attach this to the top of $M\times I$ using the composite map
\[
	E^{n,m}(\xi,\eta)\xrarrow{F(j)}D_2^n\times D^m(\eta)\xrarrow{D(s)}M
\]
we denote the result by
\[
	E_-^{n,m}(M,s,\xi)=M\times I\cup_{D(s)\circ F(j)}A^{n,m}(\xi,\eta)
\]

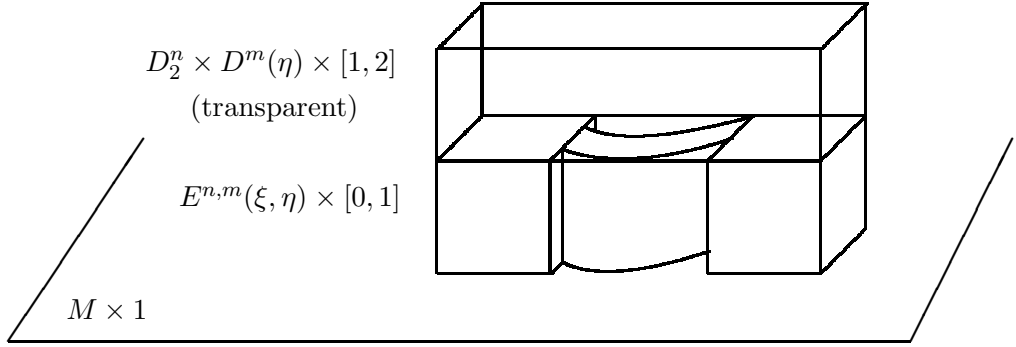
\begin{figure}[ht]
\begin{center}
%
{
\setlength{\unitlength}{3cm}
{\mbox{
\begin{picture}(3.5,2)
      \thicklines
  %
\put(1.2,0){  
\put(1.1,.7){ 
  \line(1,0){.5}
  }
  \put(1.1,.7){
  \line(0,1){.5}   \qbezier(0,.5)(.1,.6)(.2,.7)   
  }
  \put(1.6,1.2){ 
   \line(0,-1){.5} 
   \qbezier(0,0)(.1,.1)(.2,.2)
  }
  \put(1.6,1.2){ 
   \line(0,1){.5} 
  }  
  \put(1.65,1.7){ 
   \qbezier(0,0)(.1,.1)(.2,.2)
  }  
  \put(1.1,1.2){
   \line(1,0){.5} 
  }
  \put(1.3,1.4){
   \line(1,0){.5} 
  }
    \put(1.8,.9){
  \line(0,1){.5}   \qbezier(0,0)(-.1,-.1)(-.2,-.2)   
  }
     \put(1.8,.9){
  \line(0,1){1}   
  }
  }
  \put(1.1,.7){ 
  \line(1,0){.5}
  }
  \put(1.1,.7){
  \line(0,1){.5}   \qbezier(0,.5)(.1,.6)(.2,.7)   
  }
    \put(1.15,1.2){
 \qbezier(0,.5)(.1,.6)(.2,.7)   
 } 
  \put(1.3,1.9){
  \line(1,0){1.7}    
  }
  \put(1.3,1.9){
  \line(0,-1){.5}    
  }
  \put(1.1,.7){
  \line(0,1){1}    
  }
  \put(1.1,1.7){
  \line(1,0){1.7}
  }
  \put(1.6,1.2){
   \line(0,-1){.5} 
   \qbezier(0,0)(.1,.1)(.2,.2)
  }
  \put(1.1,1.2){
   \line(1,0){1.5} 
  }
  \put(1.3,1.4){
   \line(1,0){.5} 
   \qbezier(0,0)(0,-.01)(0,-.06)
  }
    \put(1.8,1.4){
   \line(1,0){1} 
}
  \put(1.7,1.25){
  \qbezier(0,0)(.3,-.1)(.75,0.05)
  \line(0,-1){.5}
  }
  \put(1.8,1.35){
  \qbezier(0,0)(.2,-.1)(.75,0.05)
  }
  \put(1.7,.75){
  \qbezier(0,0)(.2,-.1)(.65,0.05)
  }
    \put(1.65,.7){
   \qbezier(0,0)(.02,.02)(.05,.05)
  } 
  \put(0,1){$E^{n,m}(\xi,\eta)\times [0,1]$}
  \put(-.15,1.6){$D_2^n\times D^m(\eta)\times [1,2]$}
  \put(.05,1.4){(transparent)}
  \put(-.8,.4){  
  \line(1,0){4}
  }
  \put(-.8,.4){  
  \line(2,3){.6}
  }
  \put(3.2,.4){  
  \line(1,2){.45}
  }
  \put(-.5,.5){$M\times 1$}
  \put(1.1,.7){
  \line(1,0){.5}
  }	
\end{picture}}
}}
\caption{(Negative Hatcher handle) $A^{n,m}(\xi,\eta)$ is attached to the top $M\times 1$ of $M\times I$ along its base $E^{n,m}(\xi,\eta)\times 0$.}
\label{fig: negative Hatcher handle}
\end{center}
\end{figure}

The negative Hatcher handle is shown in Figure \ref {fig: negative Hatcher handle} and also in the top figure in Figure \ref{AdT figure} where $A^{n,m}(\xi,\eta)=E_1$. 

\begin{lem} When we attach the negative suspension of $E^{n,m}(\xi,\eta)$ to the top of $M\times I$ along the map $D(s):D_2^n\times D^m(\eta)\times 1\to M\times1$, the result
\[
	M\times I\cup_{D(s)} \s_-E^{n,m}(\xi,\eta)
\]
is fiberwise diffeomorphic to $E_-^{n,m}(M,s,\xi)$ with higher difference torsion given by
\[
	\t^\IK(E_-^{n,m}(M,s,\xi),M\times I) =-\t^\IK(E^{n,m}(\xi,\eta))=(-1)^{n+1}\normch(\xi)\in H^{4\bullet}(B,\d_0B)
\]
\end{lem}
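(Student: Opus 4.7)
The plan is to first establish the claimed fiberwise diffeomorphism by absorbing a collar, then recognize the attached piece as a single fiberwise $(n+1)$-handle over $B$, and finally apply the relative handlebody torsion formula.

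First I would absorb the bottom collar of the negative suspension. Recall from Subsection \ref{subsecA21} that
\[
\s_-E^{n,m}(\xi,\eta)=D_2^n\times D^m(\eta)\times[-1,0]\cup_{F(j)\times 0} E^{n,m}(\xi,\eta)\times I\cup_{F(j)\times 1} D_2^n\times D^m(\eta)\times[1,2],
\]
and $D(s)$ glues the bottom face $D_2^n\times D^m(\eta)\times\{-1\}$ onto $M\times\{1\}$ along a tubular neighborhood of $s(B)\times\{1\}$. The bottom collar $D_2^n\times D^m(\eta)\times[-1,0]$ is then a product collar attached to $M\times\{1\}$ along a codim-$0$ disk bundle. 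By Lemma \ref{first trivial lemma}, this collar can be absorbed into $M\times I$, after which the remaining piece attached to the enlarged $M\times I$ is exactly $A^{n,m}(\xi,\eta)=E^{n,m}(\xi,\eta)\times I\cup_{F(j)\times 1} D_2^n\times D^m(\eta)\times[1,2]$, glued via $D(s)\circ F(j)$. This gives a fiberwise diffeomorphism with $E_-^{n,m}(M,s,\xi)$ as defined.

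Next I would identify the handle structure over $B$. As explained in Subsection \ref{subsecA21}, $\s_-E^{n,m}(\xi,\eta)$ is a model for $E^{n+1,m}(\xi,\eta)$: after rounding corners it decomposes as a trivial subbundle with fiber $S^n\times D^{m+1}(\eta)$ to which a single fiberwise $(n+1)$-handle $D^{n+1}(\xi)\oplus D^m(\eta_0)$, with core $D^{n+1}(\xi)$, is attached. Under $D(s)$ the bottom disk $D_2^n\times D^m(\eta)\times\{-1\}$ of this trivial subbundle is glued into a tubular neighborhood of $s(B)\times\{1\}\subset M\times\{1\}$, so after attaching, the union of $M\times I$ with this trivial subbundle is itself fiberwise diffeomorphic to $M\times I$ (its complementary handles have trivial bundle coefficients). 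The only nontrivial surviving handle is the $(n+1)$-handle $D^{n+1}(\xi)\oplus D^m(\eta_0)$, now attached along $S^n(\xi)\oplus D^m(\eta_0)$ to the new fiberwise top of $M\times I$.

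Finally I would invoke the relative handlebody formula of Remark \ref{rem:handlebody lemma}, which says that attaching a single fiberwise $(n+1)$-handle with core $D^{n+1}(\xi)$ contributes $(-1)^{n+1}\normch(\xi)$ to the relative \IK-torsion. Since the trivial fiberwise handles contribute $\normch(\e)=0$, this gives
\[
\t^\IK(E_-^{n,m}(M,s,\xi),M\times I)=(-1)^{n+1}\normch(\xi)\in H^{4\bullet}(B,\d_0B),
\]
and by Theorem \ref{torsion of twisted Hatcher disk bundle} this equals $-\t^\IK(E^{n,m}(\xi,\eta))$, as required.

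The main obstacle is the middle step: reducing the codim-$0$ attachment of $\s_-E^{n,m}(\xi,\eta)$ to the clean attachment of a single $(n+1)$-handle. One has to be careful that the extra fiberwise handles coming from the trivial base $S^n\times D^{m+1}(\eta)$ of $E^{n+1,m}(\xi,\eta)$ are indeed attached with trivial bundle data and so drop out of the normalized Chern character sum; once this is verified, the computation proceeds as above.
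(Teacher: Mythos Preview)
Your argument is correct and follows the paper's approach closely: absorb one of the two product collars of $\s_-E^{n,m}(\xi,\eta)$ into $M\times I$ via Lemma \ref{first trivial lemma}, leaving $A^{n,m}(\xi,\eta)$ attached along $D(s)\circ F(j)$, which is the definition of $E_-^{n,m}(M,s,\xi)$; then compute the relative torsion by the handlebody formula of Remark \ref{rem:handlebody lemma}. The paper compresses the torsion step into a single sentence, while you spell out that $\s_-E^{n,m}(\xi,\eta)\cong E^{n+1,m}(\xi,\eta)$ contributes a trivial $n$-handle (from the $S^n\times D^{m+1}(\eta)$ piece) and a single $(n+1)$-handle with core $D^{n+1}(\xi)$, so the only nonzero term is $(-1)^{n+1}\normch(\xi)$; this is exactly the intended expansion of the paper's sentence. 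The only cosmetic discrepancy is that the paper absorbs the $[1,2]$ collar rather than the $[-1,0]$ collar, but by the evident symmetry of $\s_-$ either choice works.
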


\begin{proof}
When we attach $D_2^n\times D^m(\eta)\times [1,2]\subseteq \s_-E^{n,m}(\xi,\eta)$ to $M\times 1\subseteq \d\vv M\times I$ using the map $D(s):D_2^n\times D^m(\eta)\times 1\to M\times 1$, the result is fiberwise diffeomorphic to $M\times I$:
\[
	M\times I\cup_{D(s)} D_2^n\times D^m(\eta)\times [1,2]\cong M\times I
\]
since we can pull $D_2^n\times D^m(\eta)\times I$ into $M\times I$ by the trivial Lemma \ref{first trivial lemma}. Therefore,
\[
	M\times I\cup_{D(s)} \s_-E^{n,m}(\xi,\eta)=M\times I\cup_{D(s)} D_2^n\times D^m(\eta)\times [1,2]\cup_{F(j)} A^{n,m}(\xi,\eta)
\] 
is fiberwise diffeomorphic to $E_-^{n,m}(M,s,\xi)=M\times I\cup_{D(s)\circ F(j)} A^{n,m}(\xi,\eta)$.

The higher torsion calculation follows from the relative handlebody lemma (Remark \ref{rem:handlebody lemma}).
\end{proof}



\subsubsection{Cancellation of Hatcher handles}

We will take the ``union'' of the two constructions given above and attach both positive and negative Hatcher handles along the same section $s:B\to M$ and show that they cancel. As before, we have a smooth embedding
\[
	D(s):D^n\times D^m(\eta)\to M
\]
whose image is a tubular neighborhood of $s(B)$. Inside this disk bundle we create two smaller isomorphic disk bundles using embedding:
\[
	j_+,j_-:D^n\times D^m(\eta)\to D^n\times D^m(\eta)
\]
given by $j_+(x,y)=(\tfrac13(x+e_n),y)$ where $e_n$ is the last unit vector of $D^n$ and $j_-(x,y)=(\tfrac13(x-e_n),y)$. Since they are less than half as wide, these two embeddings are disjoint. Suppose that $E^{n,m}(\xi,\eta)$ is a Hatcher disk bundle as in the construction above. We first attach the positive Hatcher handle $B^{n,m}(\xi,\eta)$ along its base $\d_0B^{n,m}(\xi,\eta)=D^n\times D^m(\eta)\times0$ to the top $M\times 1$ of $M\times I$ using the fiberwise embedding $D(s)\circ j_-$. Next we attach the negative Hatcher handle $A^{n,m}(\xi,\eta)$ to the top of $M\times I$ along its base $\d_0A^{n,m}(\xi,\eta)=E^{n,m}(\xi,\eta)$ using the composite map
\[
	E^{n,m}(\xi,\eta)\xrarrow{F(j)}D^n\times D^m(\eta)\xrarrow{ j_+} D^n\times D^m(\eta)\xrarrow{D(s)}M
\]

Let $T$ be the image of $D(s)$ with corners rounded. Thus $T$ is a $D^{n+m}$-bundle over $B$. Let $S=\d\vv T$ be the fiberwise boundary of $T$. This is a sphere bundle over $B$. After attaching the positive and negative Hatcher handles to the top of $M\times I$ we get a new bundle
\[
	W=M\times I\cup_{D(s)\circ j_-} B^{n,m}(\xi,\eta)\cup_{D(s)\circ j_+\circ F(j)}A^{n,m}(\xi,\eta)
\]
Note that since $B^{n,m}(\xi,\eta)$ and $A^{n,m}(\xi,\eta)$ are both attached in the interior of $T$, this new bundle is the union of $C\times I$ and $T\times I\cup B\cup A$ where $C$ is the closure of $M-T$ and $A,B$ denote the Hatcher handles.

\begin{figure}[ht]
\begin{center}
%
{
\setlength{\unitlength}{3cm}
{\mbox{
\begin{picture}(3.5,2.5)
      \thicklines
\put(.5,.5){\line(1,0){2}
}
\put(2.95,.5){\line(1,0){.45}
}
\put(0,0){
\qbezier(0,0)(.25,.25)(.5,.5)
\line(1,0){3}
}
\put(0,0.5){  
      \put(1.3,1.4){ 
   \line(1,0){1.5} 
}
  \put(1.1,1.2){ 
  \line(0,1){.5}   \qbezier(0,.5)(.1,.6)(.2,.7)   
  }
  \put(2.8,1.2){ 
  \line(0,1){.5}   \qbezier(0,.5)(.1,.6)(.2,.7)   
  }
  \put(3,1.4){
  \line(0,1){.5}
  }
  \put(1.3,1.4){
  \line(0,1){.5}
  }
  \put(1.3,1.9){
  \line(1,0){1.7}
  }
  \put(1.1,1.7){
  \line(1,0){1.7}
  }
  \put(1.1,1.2){
  \line(1,0){1.7}
  }
}  
\put(3,0){
\qbezier(0,0)(.2,.25)(.4,.5)
}
\put(.5,.1){$M\times 1$
}
\put(1.2,0){  
  \put(1.1,.7){
  \line(1,0){.5}
  }
  \put(1.1,.7){ 
  \line(0,1){.5}   \qbezier(0,.5)(.1,.6)(.2,.7)   
  }
  \put(1.6,1.2){
   \line(0,-1){.5} 
   \qbezier(0,0)(.1,.1)(.2,.2)
  }
  \put(1.1,1.2){
   \line(1,0){.5} 
  }
  \put(1.3,1.4){
   \line(1,0){.5} 
  }
    \put(1.8,.9){
  \line(0,1){.5}   \qbezier(0,0)(-.1,-.1)(-.2,-.2)   
  }}
\put(1.2,0.5){  
  \put(1.1,.7){
  \line(1,0){.5}
  }
  \put(1.1,.7){ 
  \line(0,1){.5}   \qbezier(0,.5)(.1,.6)(.2,.7)   
  }
  \put(1.6,1.2){
   \line(0,-1){.5} 
   \qbezier(0,0)(.1,.1)(.2,.2)
  }
  \put(1.1,1.2){
   \line(1,0){.5} 
  }
  \put(1.3,1.4){
   \line(1,0){.5} 
  }
    \put(1.3,1.4){
   \line(0,-1){.5} 
  }
    \put(1.8,.9){
  \line(0,1){.5}   \qbezier(0,0)(-.1,-.1)(-.2,-.2)   
  }}
  \put(1.1,.7){
  \line(1,0){.5}
  }
  \put(1.1,.7){
  \line(0,1){.5}   \qbezier(0,.5)(.1,.6)(.2,.7)   
  }
  \put(1.6,1.2){
   \line(0,-1){.5} 
   \qbezier(0,0)(.1,.1)(.2,.2)
  }
  \put(1.6,1.7){
   \line(0,-1){.5} 
   \qbezier(0,0)(.1,.1)(.2,.2)
  }
  \put(1.1,1.2){
   \line(1,0){.5} 
  }
  \put(1.1,1.2){
   \line(0,1){.5} 
  }
  \put(1.3,1.4){
  \line(1,0){.5} \qbezier(0,0)(0,-.01)(0,-.06)
  }
  \put(1.7,1.25){
  \qbezier(0,0)(.3,-.1)(.75,0.05)
  \line(0,-1){.5}
  }
  \put(1.8,1.35){
  \qbezier(0,0)(.2,-.1)(.75,0.05)
  }
  \put(0,.5){
    \put(1.7,1.25){
  \qbezier(0,0)(.3,-.1)(.75,0.05)
  \line(0,-1){.5}
  }
  \put(1.8,1.35){
  \qbezier(0,0)(.2,-.1)(.75,0.05)
  }
  }
  \put(1.7,.75){
  \qbezier(0,0)(.2,-.1)(.65,0.05) 
  }
     \put(1.65,.7){
   \qbezier(0,0)(.02,.02)(.05,.05)
  }
  \put(2.5,.3){\line(1,0){.3}}
  \put(2.5,.3){\line(0,1){.4}}
  \put(2.8,.3){\line(0,1){.4}}
  \put(2.8,.3){
     \qbezier(0,0)(.1,.1)(.15,.15)
  }
  \put(2.9,.45){
  \line(0,1){.35}
  }
        \put(1.8,1.4){ 
   \line(1,0){1} 
}
      \put(1.6,1.2){ 
   \line(1,0){1} 
}
    \put(1.34,1.4){\line(0,1){.5}
}
    \put(1.84,1.4){\line(0,1){.5}
}
    \put(1.8,1.36){\line(0,1){.5}
}
    \put(2.45,1.31){\line(0,1){.5}
}
\put(1.14,1.7){       \qbezier(0,0)(.1,.1)(.2,.2)
}
\put(0,.2){
  \put(0.15,1.6){$A^{n,m}(\xi,\eta)$}
  \put(.05,1.4){(transparent)}
  }
    \put(0.15,.9){$B^{n,m}(\xi,\eta)$}
\end{picture}}
}}
\caption{(Arc de Triomphe) The negative Hatcher handle $A^{n,m}(\xi,\eta)$ is attached on top of the positive Hatcher handle $B^{n,m}(\xi,\eta)$ forming the Arc de Triomphe $V^{n,m}(\xi,\eta)$ which is diffeomorphic to $A^{n,m}(\xi,\eta)$ but attached to the top $M\times 1$ of $M\times I$ on the ``stem'' $D_0^n\times D^m(\eta)\times I$.}
\label{fig: cancelling Hatcher handles}
\end{center}
\end{figure}
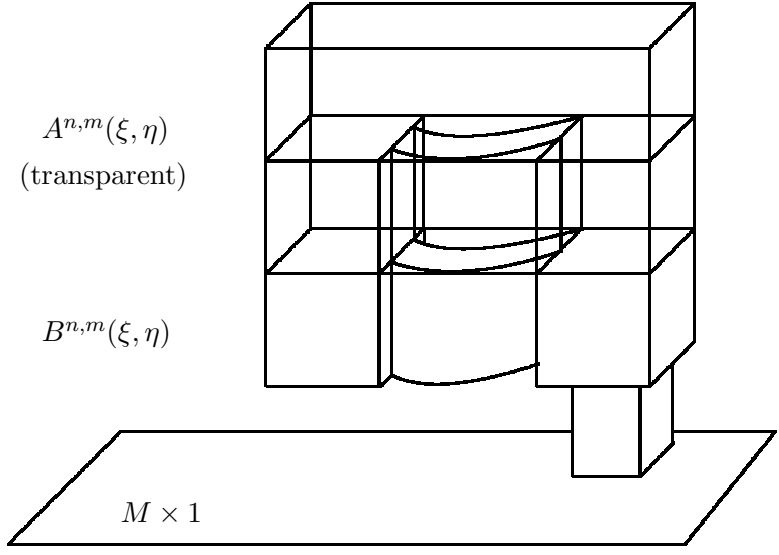

\begin{prop}[second cancellation lemma]\label{second AdT cancellation lemma}
$W$ is fiberwise diffeomorphic to $M\times I$ after rounding corners and this diffeomorphism is the identity on $C\times I$ and on $M\times 0$.
\end{prop}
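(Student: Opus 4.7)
The plan is to reduce Proposition~\ref{second AdT cancellation lemma} to the basic cancellation lemma (Proposition~\ref{basic AdT cancellation lemma}) applied inside the tubular neighborhood $T$ of $s(B)$. Since both attaching maps $D(s)\circ j_-$ and $D(s)\circ j_+\circ F(j)$ have image in the fiberwise interior of $T\times 1$, the modification from $M\times I$ to $W$ is supported inside $T\times I$; on $C\times I$ and on $M\times 0$, the bundle $W$ coincides with $M\times I$ tautologically. It therefore suffices to produce a fiberwise diffeomorphism
\[
\Phi:(T\times I)\cup_{D(s)\circ j_-} B^{n,m}(\xi,\eta)\cup_{D(s)\circ j_+\circ F(j)} A^{n,m}(\xi,\eta)\;\cong\; T\times I
\]
that restricts to the identity on $\d\vv T\times I$ and on $T\times 0$.

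The crucial step is to slide the attaching region of the negative handle $A^{n,m}(\xi,\eta)$ from the $j_+$ side of $T\times 1$ up onto the top copy $E^{n,m}(\xi,\eta)\times 2$ of the positive Hatcher handle. Once the positive handle has been attached via $D(s)\circ j_-$, the fiberwise boundary $\d\vv(T\times I\cup B^{n,m}(\xi,\eta))$ contains two fiberwise embedded copies of $E^{n,m}(\xi,\eta)$: the original one in the $j_+$-region of $T\times 1$ coming from $D(s)\circ j_+\circ F(j)$, and the tautological copy at the top of the positive Hatcher handle. Both copies lie inside the connected $(n+m)$-disk bundle obtained as the union of $D(s)\circ j_+(D^n\times D^m(\eta))$ with the positive Hatcher handle, joined through the free portion of $\d\vv T\times I$. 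By Lemma~\ref{third trivial lemma} applied to the basepoint disks of these two copies of $E^{n,m}(\xi,\eta)$ — both being orientation-preserving fiberwise embeddings of a disk bundle into a connected region over $B$ — the two embeddings of $E^{n,m}(\xi,\eta)$ are fiberwise isotopic. Lemma~\ref{second trivial lemma} then upgrades this to a fiberwise diffeomorphism of the resulting bundles; in effect the negative Hatcher handle has been slid to sit on top of the positive one, giving exactly the Arc de Triomphe configuration of Figure~\ref{fig: cancelling Hatcher handles}.

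After this rearrangement, the combined attachment $B^{n,m}(\xi,\eta)\cup A^{n,m}(\xi,\eta)$ is glued to $T\times I$ only through the stem $D^n_0\times D^m(\eta)\times 0$, exactly as in the proof of Proposition~\ref{basic AdT cancellation lemma}. Since $T$ is a disk bundle, that proof applies verbatim in the fiber direction: the AdT union is identified (rel its attachment region) with a trivial disk bundle $D^{n+m+1}\times B$ attached to $T\times I$ via a small disk on $T\times 1$, and after corner rounding this collapses isotopically into $T\times I$ by a further application of Lemmas~\ref{first trivial lemma} and~\ref{third trivial lemma}. The main technical obstacle is to control the support of the sliding isotopy in the second step so that it fixes $\d\vv T\times I$ and $T\times 0$; this is handled by constructing the basepoint-disk isotopy inside a small ball in the connected fiberwise region and invoking isotopy extension, so that the resulting $\Phi$ has the required boundary behavior.
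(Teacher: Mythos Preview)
Your proposal is correct and follows essentially the same approach as the paper: reduce to the region inside $T$, use Lemmas~\ref{second trivial lemma} and~\ref{third trivial lemma} (via the basepoint disk) to slide the base of $A^{n,m}(\xi,\eta)$ onto the top copy $E^{n,m}(\xi,\eta)\times 2$ of the positive handle, and then cancel the resulting Arc de Triomphe configuration. One small point of difference: the paper does not invoke Proposition~\ref{basic AdT cancellation lemma} as a black box at the end (that proposition is stated only for the untwisted case $\eta$ trivial), but instead repeats its key step directly, identifying the slid union as $V^{n,m}(\xi,\eta)$ attached by the stem and applying Lemma~\ref{first trivial lemma} to collapse it into $T\times I$; your phrase ``that proof applies verbatim'' is correct in spirit, but you should be aware that it is the \emph{argument} rather than the stated proposition that carries over to the twisted setting.
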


\begin{proof}
The argument is almost the same as in Proposition \ref{basic AdT cancellation lemma}. Since $\d_0A^{n,m}(\xi,\eta)=E^{n,m}(\xi,\eta)$ is a disk bundle attached using the same tangential data as $B^{n,m}(\xi,\eta)$, there is an isotopy of the attaching map ${D(s)\circ j_+\circ F(j)}$ of the negative Hatcher handle $A^{n,m}(\xi,\eta)$ to the mapping
\[
	E^{n,m}(\xi,\eta)\to E^{n,m}(\xi,\eta)\times 1\subset (E^{n,m}(\xi,\eta)\cup D_0^n\times D^m(\eta))\times I=B^{n,m}(\xi,\eta)
\]
placing $A^{n,m}(\xi,\eta)$ onto the top sides $E^{n,m}(\xi,\eta)\times 1$ of the positive Hatcher handle $B^{n,m}(\xi,\eta)=E^{n,m}(\xi,\eta)\cup D_0^n\times D^m(\eta)\times I$. After moving the attaching map, $A^{n,m}(\xi,\eta)$ is attached on top of $E^{n,m}(\xi,\eta)\times I$ and their union is 
\[
	V^{n,m}(\xi,\eta)=E^{n,m}(\xi,\eta)\times I\cup A^{n,m}(\xi,\eta)=E^{n,m}(\xi,\eta)\times [0,2]\cup D_2^n\times D^m(\eta)\cong A^{n,m}(\xi,\eta)
\]
which is attached on $M\times 1$ along the image of $D(s)\circ j_-$ by the ``stem'' $D_0^n\times D^m(\eta)$. By Lemma \ref{first trivial lemma}, $V^{n,m}(\xi,\eta)\cup D_0^n\times D^m(\eta)$ is fiberwise diffeomorphic to $D_2^n\times D^m(\eta)\cup D_0^n\times D^m(\eta)$. This is a linear disk bundle and, therefore, attaching this to the top of $T\times I$ gives a bundle $X$ diffeomorphism of $T\times I$ fixing $S\times I$. This sequence of deformations and diffeomorphisms gives a diffeomorphism $T\times I\cup B\cup A\cong T\times I$ which is the identity on $S\times I$ and therefore, can be pasted with $C\times I$ to give a fiberwise diffeomorphism $W=C\times I\cup T\times I\cup B\cup A\cong M\times I$ as claimed.
\end{proof}



\subsection{Immersed Hatcher handles}\label{subsecA24}

Since ``Hatcher handles'' are attached in a neighborhood of one point, several of them can be attached at different points at the same time. And, in the AdT construction, there are necessarily two Hatcher handles attached to the same fiber.

Let $L$ be a $q$ manifold with boundary $\d L=\d_0 L\cup \d_1 L$ where $\d_0 L,\d_1 L$ are $q-1$ manifolds meeting along their common boundary. {Let $\ll:L\to B$ be an immersion so that $\ll^{-1}(\d_1 B)=\d_1 L$ and let $\tilde\ll:L\to M$ be an embedding over $\ll$. } Then the immersed Hatcher handle construction will modify the smooth structure of $M$ in a neighborhood of the image of $\tilde\ll$. The reason that $\ll:L \to B$ will be an immersion and not an embedding is because, in the proof of key result, we will start with an Arc de Triomphe construction and separate the positive and negative Hatcher handles into immersed Hatcher handles. Since the AdT construction requires two handles to be attached over the same point in $B$, the mapping $\ll:L\to B$ parametrizing the separate handles will be 2 to 1 near these points. So, we cannot assume that $\ll$ is an embedding.

Suppose as before that $m>n>q$ and let
\[
	D(\tilde\ll):D_2^n\times D^m(\eta)\into M
\]
be a smooth embedding over $\ll:L\to B$ where $\eta$ is the pull-back along $\tilde\ll:L\to M$ of the stable vertical tangent bundle of $M$. As before, $D_2^n$ is the disk of radius 2 in $\RR^n$.

Let $\xi$ be an $n$-plane bundle over $L$ which is trivial over $\d_1L$ so that $J(\xi)=0\in J(L/\d_1L)$ and let $\eta_0$ be the unique $m$-plane bundle over $L$ so that $\xi\oplus\eta_0\cong \eta$. We define $W=E_+^{n,m}(M,\tilde\ll,\xi)$ to be the smooth $h$-cobordism bundle over $B$ so that $\d_0W=M$ given by
\[
	E_+^{n,m}(M,\tilde\ll,\xi)=M\times I\ \cup_{D(\tilde\ll)\circ F(j)} B^{n,m}(\xi,\eta)
\]
where $B^{n,m}(\xi,\eta)$ is the positive Hatcher handle parametrized by $L$. {This Hatcher handle will be ``tapered off'' along $\d_0L$ by which we mean (in the case when $\ll$ is an embedding) that we construct a fiberwise diffeomorphism $E_+^{n,m}(M,\tilde\ll,\xi)\cong M\times I$ over $\ll(\d_0L)$. 

When $\ll$ is an immersion, there will be points $b\in B$ so that $\ll^{-1}(b)$ contains more than one point. I.e., more than one Hatcher handle will be attached to the fiber $M_b$ of $M$ over $b$. In this case, we will delete those handles corresponding to the elements of $\d_0L$. By ``tapering off'' we mean that we will make this deletion operation smooth with respect to $b\in B$. To do this, we ``dig a hole'' underneath the Hatcher handle. The idea is the the ``hole'' is perfectly cylindrical, but we fill it will a deformed plug (the Hatcher handle). Over a neighborhood of $\d_0L$, the Hatcher handle is fiberwise diffeomorphic to the trivial disk bundle. So, over these points, the plug will fit perfectly into the hole and the result will be that fewer holes will be noticeably refilled. When $b$ moves around $B$ and the number of inverse image points in $L$ varies, this trick will make the transition smooth.}

{First we note that the smooth disk bundle over $L$ given by
\[
	E_L^{n,m+1}(\xi,\eta)=D_2^n\times D^m(\eta)\times I\cup_{F(j)} B^{n,m}(\xi,\eta)
\]
is fiberwise diffeomorphic to $D_2^n\times D^m\times I$ over a small neighborhood of $\d_0L$. We choose such a diffeomorphism.} Let $T$ be the image of $D(\tilde\ll):D_2^n\times D^m(\eta)\to M$. So $T\times I\subseteq M\times I$ is fiberwise diffeomorphic to $D^n\times D^m(\eta)\times I$. (In the analogy, $T\times I$ is the cylindrical chunk of dirt we pull out of the ``ground'' $M\times I$ creating a cylindrical hole: $(M-T)\times I$. We now fill the hole with $E_L^{n,m+1}(\xi,\eta)$ which is equivalent to $T\times I$ near $\d_0L$ by the chosen fiberwise diffeomorphism.) The smooth $h$-cobordism bundle $E_+^{n,m}(M,\tilde\ll,\xi)$ is given by:
\[
	E_+^{n,m}(M,\tilde\ll,\xi)=(M-T)\times I\cup E_L^{n,m+1}(\xi,\eta)
\]

\begin{thm}[torsion of immersed Hatcher handle]\label{torsion of immersed Hatcher}
The higher \IK-difference torsion of this bundle with respect to $M\times I$ is the image under the mapping
\[
	\ll_\ast:H^{4\bullet}(L,\d_0L)\cong H_{q-4\bullet}(L,\d_1L)\to H_{q-4\bullet}(B,\d_1B)\cong H^{4\bullet}(B,\d_0B)
\]
of the normalized Chern character of $\xi$:
\[
	\t^\IK (E_+^{n,m}(M,\tilde\ll,\xi),M\times I)=
	\ll_\ast\left((-1)^n\normch(\xi)\right)\in H^{4\bullet}(B,\d_0B;\RR)
\]
\end{thm}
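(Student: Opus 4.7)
My plan is to apply the relative handlebody lemma (Remark \ref{rem:handlebody lemma}) to the single essential handle inside the Hatcher handle construction, and then transfer via the Poincar\'e--Lefschetz pushforward associated to the codimension zero immersion $\ll\colon L\to B$.

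Unwinding the definition of $B^{n,m}(\xi,\eta)$, the immersed positive Hatcher handle attaches to $M\times I$ along $\tilde\ll$ a composite of three pieces: a trivial ``stem'' $D_0^n\times D^m(\eta)\times I$, a trivial ``donut'' $S^{n-1}\times I\times D^m(\eta)\times[1,2]$, and one essential $n$-handle $D^n(\xi)\oplus D^m(\eta_0)\times[1,2]$ whose core is $D^n(\xi)$. The first two pieces are linear disk bundles and contribute no $\IK$-torsion; the essential $n$-handle contributes $(-1)^n\normch(\xi)$ by the relative handlebody lemma. In the special case $\ll=\id_B$, this reproduces the positive Hatcher handle formula already established, with $\ll_\ast=\id$, giving the theorem.

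For a general codimension zero immersion $\ll\colon L\to B$, I would cover $L$ by finitely many compact codimension zero submanifolds with piecewise smooth boundary $L_1,\dots,L_k$ on each of which $\ll|_{L_i}$ is an embedding, using compactness of $L$ and the local embedding property of immersions. On each $L_i$, the immersed construction restricts to an embedded Hatcher handle supported in a tubular neighborhood of $\tilde\ll(L_i)\subset M$, so the preceding step gives its relative $\IK$-torsion as the Poincar\'e--Lefschetz pushforward $(\ll|_{L_i})_\ast\bigl((-1)^n\normch(\xi|_{L_i})\bigr)$. Applying the Additivity axiom for higher $\IK$-torsion (Definition \ref{defn:axiomatic higher torsion}) to the decomposition $L=\bigcup L_i$, together with the compatibility of the Gysin pushforward $H^{4\bullet}(L,\d_0 L)\to H^{4\bullet}(B,\d_0 B)$ with such decompositions, the local contributions assemble to $\ll_\ast\bigl((-1)^n\normch(\xi)\bigr)$, which is the desired formula.

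The main obstacle is verifying that the ``tapering off'' of the Hatcher handle along $\d_0 L$ --- essential for smoothness of the family over $B$ when $\ll$ fails to be injective --- is compatible with the handle decomposition, so that the trivial ``plug'' contributes nothing to the torsion and the handlebody formula can be applied locally on each piece $L_i$ without spurious boundary terms at the interfaces between pieces. This amounts to checking that the chosen fiberwise diffeomorphism $E_L^{n,m+1}(\xi,\eta)\cong D_2^n\times D^m\times I$ over a neighborhood of $\d_0 L$ exactly cancels the cylindrical hole $T\times I$ it fills, which in turn follows from the trivialization of $\xi$ near the relevant part of $\d L$ together with the uniqueness (up to fiberwise isotopy) of the embedding $j$ afforded by Lemma \ref{twisted embedding lemma}; the rest is bookkeeping using the relative handlebody lemma and naturality of $\normch$.
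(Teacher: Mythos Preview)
Your reduction for $\lambda=\id_B$ is fine and matches the known positive Hatcher handle computation, though note that the stem--plus--donut attached to $M\times I$ is not merely a linear disk bundle: it contributes an $(n{-}1)$-handle with trivial core bundle $\epsilon^{n-1}$, hence zero $\normch$. The paper's fiberwise Morse function accordingly has \emph{two} critical points, of indices $n-1$ and $n$.

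The genuine gap is in passing to a general $\lambda$. First, the relative handlebody lemma (Remark~\ref{rem:handlebody lemma}) concerns a handle $D^n(\xi)\oplus D^m(\eta)$ attached fiberwise over all of $B$; when the handle is parametrized by a proper $L\subset B$ (even an embedded one) and tapered off along $\partial_0 L$, it is no longer a handle over $B$ in that sense, so your ``preceding step'' does not yet give the pushforward formula for a single embedded piece $L_i$. Second, the Additivity axiom (Definition~\ref{defn:axiomatic higher torsion}) decomposes the \emph{total space} $M=M_0\cup M_1$ along a common fiberwise boundary over a fixed base; it does not let you sum contributions along a cover $L=\bigcup L_i$ of the \emph{parameter space}. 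The Hatcher handles over different $L_i$ do not meet along fiberwise boundaries, and at the internal interfaces of the $L_i$ you would need $\xi$ trivial to taper, which is not assumed.

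The paper's argument avoids both issues by working with a fiberwise generalized Morse function. The tapering along $\partial_0 L$ is exactly a cancellation of the index $n{-}1$ and index $n$ critical points along a framed birth-death set (framed because $\xi|_{\partial_0 L}$ is trivial). The Relative Framing Principle (Theorem~\ref{relative framing principle}) then computes $\tau^\IK$ as the pushdown $p_\ast^\Sigma(\normch(\xi))$ from the singular set $\Sigma(F)\subset E_+^{n,m}(M,\tilde\lambda,\xi)$, and since $\Sigma(F)=D(\tilde\lambda)(\Sigma(f))$ with $\Sigma_n(f)\simeq L$, this pushdown is precisely $\lambda_\ast((-1)^n\normch(\xi))$. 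That is where $\lambda_\ast$ enters---through the geometry of the singular set of a single global GMF, not through patching local handlebody formulas.
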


\begin{rem} This theorem can be viewed as the commutativity of the diagram:
\[
\xymatrix{
G(L,\d_0L)\ar[r]^{E_L^n(-,\eta)}\ar[dr]_{E_+^n(M,\tilde\ll,-)}\ar@/^2pc/[rr]^{(-1)^n\normch} &  
\pi_0\std{L}{\d_0}(D(\eta))\ar[d]^{D(\tilde\ll)_\ast} \ar[r]_{\t^\IK}&
 H^{4\bullet}(L,\d_0L;\RR)\ar[d]^{\ll_\ast}\\
\qquad\qquad\qquad\qquad&  \pi_0\std{B}{\d_0}(M)\ar[r]_{\t^\IK}&
H^{4\bullet}(B,\d_0B;\RR)
	}
\]
The commutativity of the upper curved triangle is Theorem \ref {torsion of twisted Hatcher disk bundle}.

\end{rem}

To prove this, we need to recall the precise statement of the Framing Principle from \cite{I:ComplexTorsion}. Suppose that $W\to B$ is a smooth $h$-cobordism bundle with fiberwise boundary equal to
\[
	\d\vv W=M\cup \d\vv M\times I\ \cup M_1
\]
and $f:W\to I$ is a fiberwise generalized Morse function equal to $0$ on $M$ and $1$ on $M_1$ and equal to projection to $I$ on $\d\vv M\times I$. Suppose that the fiberwise singular set $\Sig(f)$ of $f$ does not meet $W_{\d_0B}$. In particular, $W_{\d_0B}\cong M_{\d_0B}\times I$. We are in the restricted case when the birth death points of $f$ are {\bf framed} in the sense that the negative eigenspace bundle of $D^2f$ is trivial over the birth-death points. This implies that, over the set $\Sig_i(f)$ of Morse points of $f$ of index $i$, the negative eigenspace bundle of $D^2f$ is trivial along $\d_0\Sig_i(f)$ which is equal to the set of birth-death points to which $\Sig_i(f)$ converges. The Framing Principle was proved in this restricted case in \cite{I:BookOne}.

In general, the negative eigenspace bundle is a well defined stable vector bundle $\xi=\xi(f)$ on the entire singular set $\Sig(f)$. It is defined as follows. {At each index $i$ critical point $x$ of $f$ let $\xi(x)=\xi_i(x)\oplus \e^{N-i}$ where $\xi_i(x)$ is the $i$-dimensional negative eigenspace of $D^2f$ and  $\e^{N-i}$ is the trivial bundle with dimension $N-i$ where $N=n+m+1$ is the dimension of the fiber of $W\to B$.} This defines an $N$-plane bundle over $\Sig_i(f)$. At each cubic point we identify the positive cubic direction with the positive first coordinate direction in $\e^{N-i}$. This has the effect of pasting together these $N$-plane bundles over $\Sig_i(f)$ and $\Sig_{i+1}(f)$ along their common boundary for each $i$. The result is an $N$-plane bundle over all of $\Sig(f)$.

The projection mapping $p:(\Sig(f),\d\Sig(f))\to (B,\d_1B)$ induces a map in cohomology using Poincar\'e duality assuming that $B$ is oriented. (If $B$ is not oriented then just replace it with the disk bundle of the orientation line bundle.)
\[
	p^\Sig_\ast:H^\ast(\Sig(f))\cong H_{q-\ast}(\Sig(f),\d\Sig(f))\to
	H_{q-\ast}(B,\d_1B)\cong H^\ast(B,\d_0B)
\] 
Similarly, for each index $i$ we have the push-down operator:
\[
	p_\ast:H^\ast(\Sig_i(f),\d_0\Sig_i(f))\cong H_{q-\ast}(\Sig_i(f),\d_1\Sig_i(f))\to
	H_{q-\ast}(B,\d_1B)\cong H^\ast(B,\d_0B)
\]
where $\d_1\Sig_i(f)=\Sig_i(f)\cap\d\Sig(f)$ and $\d_0\Sig_i(f)$ is the set of birth-death points in the closure of $\Sig_i(f)$. We use the orientation for $\Sig_i(f)$ which agrees with the orientation of $B$ and we take the orientation of $\Sig(f)$ which agrees with the orientation of $\Sig_i(f)$ for $i$ even. As a result of these sign conventions we have the following observation.

\begin{lem}
In the restricted case when the birth-death points of $f$ are framed, then the image under $p^\Sig_\ast$ of the Chern character of $\xi(f)$ is equal to the the alternating sum of images under the push-down operators:
\[
	p_\ast:H^{4\bullet}(\Sig_i(f),\d_0\Sig_i(f))\to H^{4\bullet}(B,\d_0B)
\]
of the Chern character of $\xi_i=\xi|\Sig_i(f)$:
\[
	p^\Sig_\ast(ch(\xi\otimes\CC)=\sum_i (-1)^ip_\ast(ch(\xi_i\otimes\CC)\in H^{4\bullet}(B,\d_0B)
\]
\end{lem}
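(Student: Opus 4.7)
The plan is to expand $p^\Sig_\ast$ as an alternating sum of the stratum push-down maps $p_\ast$ on each $\Sig_i(f)$ via an orientation-bookkeeping argument, and then to invoke the framing hypothesis to replace the Chern character of $\xi(f)$ on each stratum by the Chern character of $\xi_i$.

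First I record the consequences of the framing hypothesis. By definition $\xi(f)|_{\Sig_i(f)}=\xi_i\oplus\e^{N-i}$, so at the level of Chern characters
\[
	ch(\xi(f)\otimes\CC)|_{\Sig_i(f)}=ch(\xi_i\otimes\CC)+(N-i),
\]
where the second summand lies in $H^0$. Since $f$ has framed birth-death points, the $i$-plane bundle $\xi_i$ is trivial over $\d_0\Sig_i(f)$, so $ch(\xi_i\otimes\CC)$ vanishes on $\d_0\Sig_i(f)$ in every positive degree and therefore lifts (canonically, up to $H^0$) to a relative class in $H^{4\bullet}(\Sig_i(f),\d_0\Sig_i(f))$ on which $p_\ast$ is defined. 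Because $H^{4\bullet}$ is indexed by $k>0$, the constant $(N-i)$ contributes nothing, so in $H^{4\bullet}(B,\d_0B)$ we have $p_\ast(ch(\xi(f)\otimes\CC)|_{\Sig_i(f)})=p_\ast(ch(\xi_i\otimes\CC))$.

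Next I relate $p^\Sig_\ast$ to the sum of the $p_\ast$. Triangulate $\Sig(f)$ compatibly with the stratification $\{\Sig_i(f)\}$. The orientation of each $\Sig_i(f)$ was chosen to agree with that of $B$, while the global orientation of $\Sig(f)$ was declared to agree with the orientation of $\Sig_i(f)$ for $i$ even. Therefore, at the level of relative chains,
\[
	[\Sig(f)]=\sum_i(-1)^i[\Sig_i(f)]\in C_q(\Sig(f),\d_1\Sig(f)),
\]
where on the right each $[\Sig_i(f)]$ uses its $B$-compatible orientation. The boundary contributions along the birth-death locus $\d_0\Sig_i(f)=\d_0\Sig_{i-1}(f)$ cancel in pairs precisely because the global orientation of $\Sig(f)$ extends smoothly across the birth-death strata (i.e., $\Sig(f)$ has no boundary away from $\d_1\Sig(f)$). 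Capping with $\alpha=ch(\xi(f)\otimes\CC)$ and applying $p_\ast$, naturality of the cap product yields
\[
	p^\Sig_\ast(\alpha)=p_\ast\bigl([\Sig(f)]\cap\alpha\bigr)=\sum_i(-1)^i p_\ast\bigl([\Sig_i(f)]\cap\alpha|_{\Sig_i(f)}\bigr)=\sum_i(-1)^i p_\ast\bigl(ch(\xi_i\otimes\CC)\bigr),
\]
which is the desired identity.

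The main obstacle is the chain-level verification that $[\Sig(f)]=\sum_i(-1)^i[\Sig_i(f)]$ defines a cycle in $C_q(\Sig(f),\d_1\Sig(f))$, i.e., that the boundary contributions along every component of the birth-death locus cancel under the prescribed orientation conventions. Once this is established, compatibility of Poincar\'e--Lefschetz duality with the cap product and with pushforward along $p$ is formal, and the framing hypothesis is used only to guarantee that $ch(\xi_i\otimes\CC)$ makes sense as a class relative to $\d_0\Sig_i(f)$.
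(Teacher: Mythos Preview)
Your proposal is correct and matches the paper's intent: the paper states this lemma without proof, introducing it with ``As a result of these sign conventions we have the following observation,'' so it regards the identity as an immediate consequence of the orientation conventions just described. Your argument is precisely the detailed unpacking of that observation---the decomposition $[\Sig(f)]=\sum_i(-1)^i[\Sig_i(f)]$ is exactly the content of the convention that $\Sig(f)$ is oriented to agree with $\Sig_i(f)$ for $i$ even while each $\Sig_i(f)$ carries the $B$-compatible orientation, and your use of the framing hypothesis to lift $ch(\xi_i\otimes\CC)$ to the relative group $H^{4\bullet}(\Sig_i(f),\d_0\Sig_i(f))$ and to discard the constant $N-i$ is the correct way to make the stratum-by-stratum push-down well defined.
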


\begin{thm}[Relative Framing Principle]\label{relative framing principle} Suppose that the manifold $B$ and the stable bundle $\xi=\xi(f)$ are both oriented. Then the higher relative \IK-torsion invariant $\t^\IK(W,M)\in H^{4\bullet}(B,\d_0B)$ is given by the higher torsion of the family of acyclic chain complexes $C(f)$ given by $f$ plus the push down of the normalized Chern character of $\xi$:
\[
	\t^\IK(W,M)=\t(C(f))+p^\Sig_\ast(\normch(\xi))\in H^{4\bullet}(B,\d_0B)
\]
\end{thm}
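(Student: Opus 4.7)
The plan is to reduce the general (unframed) case to the restricted framed case proved in \cite{I:BookOne}, using the immersed Hatcher handle construction of Section 2 as the interpolating device. In the framed case, the Framing Principle of \cite{I:BookOne} reads $\t^\IK(W,M) = \t(C(f)) + \sum_i (-1)^i p_\ast\normch(\xi_i)$, which by the lemma immediately preceding the present theorem equals $\t(C(f)) + p^\Sig_\ast\normch(\xi)$ whenever the birth-death points are framed. So the task is to show that this uniform expression persists when $\xi$ fails to be trivial over $\d_0\Sig(f)$.

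First I would choose a stable vector bundle $\zeta$ over $\Sig(f)$ with $\xi \oplus \zeta$ trivial on a fiberwise neighborhood of $\d_0\Sig(f)$; such $\zeta$ exists because $\xi|_{\d_0\Sig(f)}$ has a stable inverse. Using the immersed Hatcher handle construction (Theorem \ref{torsion of immersed Hatcher}) with bundle parameter $\zeta$, immersed along the restriction of the projection $\Sig(f) \hookrightarrow W \to B$, I would attach Hatcher handles to $W$ to produce an enlarged smooth $h$-cobordism bundle $\wt W$ carrying an extended fiberwise GMF $\wt f$ whose negative eigenspace bundle $\wt\xi$ equals $\xi \oplus \zeta$. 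By construction $\wt\xi$ is trivial along the new birth-death set, so $\wt f$ is framed.

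Applying the framed Framing Principle to $\wt f$ on $\wt W$ yields $\t^\IK(\wt W, M) = \t(C(\wt f)) + p^\Sig_\ast \normch(\wt\xi)$, and Theorem \ref{torsion of immersed Hatcher} evaluates the cost of the Hatcher attachment as $\t^\IK(\wt W, W) = p^\Sig_\ast \normch(\zeta)$ (with the stratumwise signs $(-1)^n$ tracked by the orientation conventions). Subtracting, using additivity $\normch(\wt\xi) = \normch(\xi) + \normch(\zeta)$, and noting that the Hatcher handles contribute only algebraically cancelling pairs of critical points to the fiberwise chain complex (so $\t(C(\wt f)) = \t(C(f))$), yields the claimed formula
\[
\t^\IK(W,M) = \t^\IK(\wt W,M) - \t^\IK(\wt W,W) = \t(C(f)) + p^\Sig_\ast\normch(\xi).
\]

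The hard part will be the geometric construction in the second paragraph: engineering the immersed Hatcher handles so that, under the GMF-to-handle dictionary of Subsections 2.1--2.3, they add precisely the prescribed $\zeta$-worth of negative eigenspace directions along $\d_0\Sig(f)$ while leaving the Morse data elsewhere unchanged, and verifying the compatibility of all sign conventions. This amounts to a local normal form problem for birth-death singularities with prescribed stable framing defect, which the explicit Hatcher models $E^{n,m}(\xi,\eta)$ of Subsection 2.2 together with the embedding-uniqueness result of Lemma \ref{embedding lemma} should accommodate.
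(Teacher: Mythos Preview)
Your proposal has two serious problems.

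\textbf{Circularity.} You invoke Theorem \ref{torsion of immersed Hatcher} (torsion of immersed Hatcher handle) to compute $\t^\IK(\wt W,W)$. But in the paper the proof of Theorem \ref{torsion of immersed Hatcher} comes \emph{after} the Relative Framing Principle and explicitly uses it: the computation of $\t^\IK(E_L^{n,m}(\xi,\eta),D^n\times D^m(\eta)\times I)$ and of $\t^\IK(E_+^{n,m}(M,\tilde\ll,\xi),M)$ both appeal to the (Relative) Framing Principle. So your argument is circular as stated.

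\textbf{Misidentified gap.} You set out to pass from the framed case of \cite{I:BookOne} to the unframed case. But the unframed absolute case (i.e.\ with $\d_0B=\emptyset$) is already established in \cite{I:ComplexTorsion}; the paper takes that as input. What is actually new in this theorem is the \emph{relative} statement, i.e.\ allowing $\d_0B\neq\emptyset$ and landing in $H^{4\bullet}(B,\d_0B)$. Your argument does not address this at all.

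The paper's proof is accordingly quite different and much shorter: it reduces the relative case to the absolute one by a collapse trick. One embeds $M=\d_0W$ into the boundary of a large trivial disk bundle $B\times D^N$, extends the vertical normal bundle $\nu_M$ to $\nu_W$, and forms $\Delta=B\times D^N\cup D(\nu_W)$. Additivity and stability give $\t^\IK(W,M)=\t^\IK(\Delta)$. Since $\Delta$ is a disk bundle trivial over $\d_0B$, one collapses $\d_0B$ to a point to obtain $\ov\Delta\to B/\d_0B$, replants the fiberwise GMF there, and applies the absolute Framing Principle of \cite{I:ComplexTorsion}. No Hatcher handles are needed.
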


\begin{proof}
The published version of the Framing Principle \cite{I:ComplexTorsion} assumes that $\d_0B$ is empty. However, the relative case follows easily from the absolute case in the present setting where we have an $h$-cobordism bundle $W$. Just take the base $\d_0W=M$ and embed it into the boundary of a very large dimensional trivial disk bundle $B\times D^N$. Let $\nu_M$ be the vertical normal bundle of $M$ in $B\times S^{N-1}$ and let $\nu_W$ be the extension of $\nu_M$ to $W$. Then we have a new bundle:
\[
	\Delta=B\times D^N\cup D(\nu_W)
\]
over $B$. Since $D(\nu_W)$ is an $h$ cobordism bundle, this is a smooth $N$-disk bundle over $B$ (after rounding off corners). By additivity and invariance after passing to linear disk bundles, we have:
\[
	\t^\IK(W,M)=\t^\IK(D(\nu_W,\nu_M))=\t^\IK(\Delta,B\times D^N)=\t^\IK(\Delta)
\]
But, $\Delta$ is a disk bundle over $B$ which is trivial over $\d_0B$. So, we can collapse $\d_0B$ to a point to get a new bundle $\ov\Delta$ over $B/\d_0B$. The Framing Principle for $\ov\Delta\to B/\d_0B$ is then equivalent to the relative Framing Principle for $(W,M)$.

To do this more precisely, we do the same trick as before, removing a tube $T=D(\nu_M)\times I$ in a collar neighborhood of $B\times S^{N-1}$ and replace it with $W$. The new fiberwise Morse function will be equal to the distance squared from the origin in $B\times D^N-T$ and equal to $f$ (rescaled to match) on $W$. Now we collapse the bundle over $\d_0B$. By construction, the fiberwise generalized Morse function will factor through this quotient bundle and the original Framing Principle applies.
\end{proof}

\begin{proof}[Proof of Theorem \ref{torsion of immersed Hatcher}]
We will start with a fiberwise oriented Morse function on the bundle $E_L^{n,m}(\xi,\eta)\to L$ and then modify it to give a fiberwise oriented generalized Morse function which is framed on the birth-death set.

{The bundle $E_L=E_L^{n,m}(\xi,\eta)$ is obtained from $D_2^n\times D^m(\eta)\times I$ by attaching two handles with cores of dimension $n-1$ and $n$.} (For a more elaborate version of this with more details, see \cite{Goette03}) This means it has a fiberwise Morse function $f:E_L\to I$ which is equal to the projection map to $I$ in a neighborhood of the bottom $D_2^n\times D^m(\eta)\times 0$ and sides $\d(D_2^n\times D^m(\eta))\times I$. Furthermore $f$ will have two critical points over every point $t\in L$. These critical points $x_t,y_t$ have index $n-1$ and $n$ respectively. The vertical tangent bundle of $E_L$ splits as $\e^{n-1}\oplus (\eta\oplus \e^1)$ along the section $x_t$ of $E_L$ where the trivial $n-1$ plane bundle $\e^{n-1}$ is the negative eigenspace of $D^2f_t$ along $x_t$. The vertical tangent bundle of $E_L$ along $y_t$ splits as $\xi\oplus (\eta_0\oplus \e^1)$ where the vector bundle $\xi$, which is homotopically trivial in the sense that $J(\xi)=0$, is the negative eigenspace bundle.

Along $\d_0L$, the bundle $\xi$ is trivial and the handle corresponding to $y_t$ is in cancelling position with the handle corresponding to $x_t$ since they are both standard linear handle along $\d_0L$ by construction. This implies that these critical points can be cancelled along a birth-death set of index $n-1$. Since the negative eigenspace bundle $\xi$ is trivial along this set, this is a framed birth-death set. The new singular set $\Sig(f)$ is now a $q$-manifold with boundary lying over $\d_1L$. It has a framed birth-death set and Morse sets in two indices $\Sig_n(f)$ and $\Sig_{n-1}(f)$. The descending bundles are $\xi_{n-1}=\e^{n-1}$ and $\xi_n=\xi$. These are oriented bundle since they are homotopically trivial. Also the cellular chain complex is trivial at every point. Therefore, by the Framing Principle, the higher relative \IK-torsion of $E_L^{n,m}(\xi,\eta)$ is
\[
	\t^\IK(E_L^{n,m}(\xi,\eta),D^n\times D^m(\eta)\times I)=(-1)^n\normch(\xi)\in H^{4k\bullet}(L,\d_0L)
\]

From this fiberwise oriented generalized Morse function we can construct a fiberwise oriented generalized Morse function $F$ on $E_+^{n,m}(M,\tilde\ll,\xi)=(M-T)\times I\cup E_L$ by taking projection to $I$ on the first piece $(M-T)\times I$ and $f$ on the second piece $E_L$. The singular set of $F$ is the image under $D(\tilde\ll)$ of the singular set of $f$. Consider the following commuting diagram.
\[
\xymatrix{
\Sig_n(f)\ar[r]^\subset\ar[rd]^\simeq&\Sig(f)\ar[d]\ar[r]^(.45){D(\tilde\ll)} &
	\Sig(F)\ar[d]^p\\
&L \ar[r]^\ll& 
	B
	}
\]
This implies that the image of the push-down of the Chern character of $\xi$ along the map $p$ is equal to the image of the Chern character of $\xi$ under $\ll$. So, by the relative Framing Principle, we have
\[
	\t^\IK(E_+^{n,m}(M,\tilde\ll,\xi),M)=(-1)^np_\ast(\normch(\xi))=(-1)^n\ll_\ast(\normch(\xi))
\]
as claimed.
\end{proof}


%
%

\section{Main Theorems}

There are two main theorems in this paper. The first concerns the set of possible higher torsion invariants of exotic smooth structures on smooth manifold bundles.

The second theorem is that, rationally stably, the immersed Hatcher construction gives all possible exotic smooth structures on smooth manifold bundles with odd dimensional fibers. This is a combination of the following two theorems. First recall from Section 2 of \cite{Second} that
\[
	\pi_0\std{B}{\d_0}(M)\otimes\RR \cong H_{q-4\bullet}(M,M_{\d_1B})
\]
where the spot $\bullet$ indicates direct sum over all $k>0$ with real coefficients unless otherwise indicated and the image of an exotic smooth structure $M'$ on $M$ is denoted
\[
	\Theta_M(M')=\Theta(M',M)\in H_{q-4\bullet}(M,M_{\d_1B})
\]
and we call it the {\bf (rational) exotic structure class} of $M'$.

\begin{thm}\label{first main theorem}
When the fiber dimension is odd, the rational exotic structure class $\Theta(M',M)$ given by the immersed Hatcher construction $E_+^{n,m}(M,\tilde\ll,\xi)$ is the image of the Poincar\'e dual of twice the normalized Chern character of $\xi$ under the map in homology induced by the embedding $\tilde\ll:(L,\d_1L)\to (M,M_{\d_1B})$. Thus:
\[
	\Theta(M',M)=(-1)^n\tilde\ll_\ast D(2\normch(\xi))
\]
where $\normch(\xi)\in H^{4\bullet}(L,\d_0L)$ is given in Definition \ref {def: normalized Chern character} and $\tilde\ll_\ast\circ D$ is the composition:\[
	H^{4\bullet}(L,\d_0L) \xrightarrow{\cong} H_{q-4\bullet}(L,\d_1L)\xrarrow{\tilde\ll_\ast} H_{q-4\bullet}(M,M_{\d_1B})
\]
\end{thm}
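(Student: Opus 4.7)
The strategy is to reduce everything to the disk bundle case of the identity $p_\ast\circ\Theta = D\circ\t^\IK$, which is already proved in \cite{Second}, and then transfer the result through the naturality (stratified deformation) of $\Theta$.

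First, I would exploit the fact that the immersed Hatcher $h$-cobordism $E_+^{n,m}(M,\tilde\ll,\xi) = (M-T)\times I \cup E_L^{n,m+1}(\xi,\eta)$ is supported entirely in the closed fiberwise tubular neighborhood $T = D(\tilde\ll)(D_2^n\times D^m(\eta))$ of $\tilde\ll(L)$ in $M$. Regarding $T$ as an embedded $(n+m)$-disk bundle over $L$ via the projection $p_T:T\to L$, the piece $E_L^{n,m+1}(\xi,\eta)$ is, by construction, exactly the non-immersed positive Hatcher handle $h$-cobordism applied to $T\to L$ using the zero section $\iota:L\to T$. Consequently, $M' = \d_1 E_+^{n,m}(M,\tilde\ll,\xi)$ is the pushforward under the embedding $D(\tilde\ll):T\hookrightarrow M$ of the local exotic smooth structure $T' = \d_1 E_L^{n,m+1}(\xi,\eta)$ on $T$. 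Applying the stratified deformation theorem to $D(\tilde\ll)$ over the immersion $\ll$ yields the naturality identity
\[
\Theta(M',M) = D(\tilde\ll)_\ast\, \Theta(T',T) \in H_{q-4\bullet}(M, M_{\d_1 B}).
\]

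Second, since $p_T:T\to L$ is a disk bundle, the map $p_{T,\ast}:H_{q-4\bullet}(T,T_{\d_1 L})\to H_{q-4\bullet}(L,\d_1 L)$ is an isomorphism whose inverse is $\iota_\ast$. The disk bundle case of the main identity, proved in \cite{Second}, gives
\[
p_{T,\ast}\, \Theta(T',T) = D\, \t^\IK(T',T).
\]
The odd-fiber-dimension corollary of the positive Hatcher handle theorem (Section 2.3), applied to $T\to L$ with section $\iota$ and valid because $N=n+m$ is odd by hypothesis, evaluates the right-hand side:
\[
\t^\IK(T',T) = (-1)^n\, 2\, \normch(\xi) \in H^{4\bullet}(L,\d_0 L).
\]
Inverting $p_{T,\ast}$ gives $\Theta(T',T) = (-1)^n\, \iota_\ast D(2\normch(\xi))$. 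Substituting into the naturality identity and using $D(\tilde\ll)\circ\iota = \tilde\ll$, so that $D(\tilde\ll)_\ast \circ \iota_\ast = \tilde\ll_\ast$, we conclude
\[
\Theta(M',M) = (-1)^n\, \tilde\ll_\ast D(2\normch(\xi)),
\]
as required.

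The principal obstacle will be the verification in Step 1: one must check carefully that the immersed $h$-cobordism really does coincide, inside $T$, with the pushforward of a local $h$-cobordism over $L$, so that the stratified deformation theorem applies in the form needed. This is delicate because $\ll:L\to B$ is only an immersion and not an embedding, so several local Hatcher handles over distinct preimage points of $\ll$ may lie above the same fiber of $M\to B$; one needs to confirm that the tapering-off along $\d_0 L$ and the gluing with $(M-T)\times I$ are fiberwise compatible, and that the resulting exotic structure on $T$ relative to $\d L$ is exactly the one to which the disk-bundle instance of $p_\ast\Theta = D\t^\IK$ and the odd-dimensional Hatcher-handle corollary apply.
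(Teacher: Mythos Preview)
Your proposal is correct and follows essentially the same route as the paper: reduce to the disk bundle $T\cong E$ over $L$ via the naturality of $\Theta$ (the stratified deformation theorem, Corollary 2.4.3 of \cite{Second}), invoke the normalization $p_\ast\Theta=D\t^\IK$ for disk bundles (Proposition 2.2.4 of \cite{Second}), and then compute the local relative torsion as $(-1)^n2\normch(\xi)$. The only cosmetic difference is that the paper packages these steps as the commutativity of a single large diagram (which it records because the full diagram is reused in later lemmas), and it re-derives the identity $\t^\IK(T',T)=(-1)^n2\normch(\xi)$ from the additivity axiom and Corollary~\ref{cor: torsion of twisted Hatcher sphere bundle} rather than citing the Section~2.3 corollary directly.
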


\begin{rem}\label{rem: scalar multiple of integral class} By definition of the normalized Chern character, the exotic structure class $\Theta(M',M)$ lies in the image of
\[
	 H_{q-4\bullet}(M,M_{\d_1B};\zeta(2k+1)\QQ)
\]
In particular, $\Theta(M',M)$ is a scalar multiple of an integral class in every degree.
\end{rem}

\begin{proof} The proof will show the commutativity of the following diagram which is a slightly stronger statement:
\[
\xymatrix{
G(L,\d_0L)\ar[rr]_{top\,E_L^n(-,\eta)}\ar[drr]_{top\,E_+^n(M,\tilde\ll,-)}\ar@/^2pc/[rrrr]_{D\circ (-1)^n2\normch} &&  
\pi_0\std{L}{\d_0}(E)\ar[d]^{D(\tilde\ll)_\ast} \ar[rr]_(.4){D\circ\t^\IK}&&
 H_{q-4\bullet}(L,\d L)\ar[d]^{\ll_\ast}\ar[dl]^{\tilde\ll_\ast}\\
&&  \pi_0\std{B}{\d_0}(M)\ar[r]_(.4){\Theta}&
 H_{q-4\bullet}(M,M_{\d_1B})\ar[r]_{p_\ast} &
 H_{q-4\bullet}(B,\d_1B)
	}
\]
Here $G(L,\d_0L)=[L/\d_0L,G/O]$. The middle portion can be expanded into the following diagram where $E=D^n\times D^m(\eta)$ is the disk bundle over $L$ which is diffeomorphic to a tubular neighborhood of the image of $\tilde\ll:L\to M$.
\[
\xymatrix{
	\pi_0\std{L}{\d_0}(E)\ar[d]_{D(\tilde\ll)_\ast}\ar[r]_(.45){\Theta_E}\ar@/^2pc/[rr]_{D\circ\t^\IK} &
	H_{q-4\bullet}(E,E_{\d_1})\ar[d]_{D(\tilde\ll)_\ast}\ar[r]_\cong
	& H_{q-4\bullet}(L,\d_1L)\ar[dl]^{\tilde\ll_\ast}\\ 
	\pi_0\std{B}{\d_0}(M) \ar[r]^(.45){\Theta_M}&
	H_{q-4\bullet}(M,M_{\d_1})
	}
\]

The morphisms $\Theta_E,\Theta_M$ in the second diagram are isomorphisms of vector spaces after tensoring with $\RR$ by Theorem 2.2.3 of \cite{Second} and the vertical maps are all induced by $\tilde\ll:L\to M$ and $D(\tilde\ll):E\to M$. The square commutes by Corollary 2.4.3 of \cite{Second}. The triangle on the right commutes since it comes from a commuting diagram of spaces. The composition of the top two arrows is equal to $D\circ\t^\IK$ by normalization of $\Theta_E$ (Proposition 2.2.4 of \cite{Second}). Therefore, the second diagram commutes. So, the middle quadrilateral in the first diagram commutes.

If we look at the top of the immersed Hatcher handle we get an element
\[
	top(E_+^{n,m}(M,\tilde\ll,\xi))\in \std{B}{\d_0}(M)
\]
which, by construction is the image of the Hatcher disk bundle \[
E'=top(E_+^{n,m}(E,0,\xi))\in \std{L}{\d_0}(E)
\]
under the stratified map $\std{L}{\d_0}(E)\to\std{B}{\d_0}(M)$. The composition of the horizontal mappings on the top row of the first diagram takes $\xi\in G(L,\d_0L)$ to $\t^\IK(E')$. And the last statement we need to prove is:
\[
	\t^\IK(E')=(-1)^n2\normch(\xi).
\]
This follows from the following four equations where $E$ is the bottom of $E_+^{n,m}(E,0,\xi)$. Since $E$ is a linear disk bundle, we have $\t(E)=0$ for any stable torsion invariant $\t$.
\begin{enumerate}
\item $\t(E)=0=\frac12\t(DE)+\frac12\t(\d\vv E)$ by Remark \ref{rem:extension to not closed fibers}.
\item $\t(E')=\frac12\t(DE')+\frac12\t(\d\vv E)$ since $\d\vv E'=\d\vv E$.
\item $\t(\d\vv E_+^{n,m}(E,0,\xi))=\frac12\t(DE)+\frac12\t(DE')$ by Additivity Axiom \ref{defn:axiomatic higher torsion}.
\item $\t^\IK(\d\vv E_+^{n,m}(E,0,\xi))-\t^\IK(DE)=(-1)^n2\normch(\xi)$ by Corollary \ref{cor: torsion of twisted Hatcher sphere bundle}.
\end{enumerate}
This proves the commutativity of the first diagram and the theorem follows.
\end{proof}

\begin{prop}\label{main proposition}
The vector space $H_{q-4\bullet}(M,M_{\d_1B})$ is spanned by the images of the possible maps
\[
 	G(L,\d_0L)\to H_{q-4\bullet}(M,M_{\d_1B})
 \]
given by $\tilde\ll_\ast\circ D\circ(-1)^n2\wt{ch}=\Theta_M\circ top\, E_+^n(M,\tilde\ll,-)$ in the theorem above.
\end{prop}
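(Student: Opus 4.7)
The plan is to reduce the proposition to two facts: (i) a rational surjectivity statement for the Chern character on $G(L,\d_0L)$, and (ii) the realization theorem for fiberwise generalized Morse functions. The content of the proposition is then essentially a translation of the spanning property of singular sets of fiberwise GMFs into a spanning property of the immersed Hatcher construction.

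First I would observe that, since the inclusion $G/O \hookrightarrow BG$ has rationally trivial target (the stable homotopy groups of spheres are torsion), the fiber sequence $G/O \to BO \to BG$ shows $G/O$ is rationally homotopy equivalent to $BO$ via the natural map, and hence the Chern character
\[
   \wt{ch}:G(L,\d_0L)\otimes \RR \longrightarrow H^{4\bullet}(L,\d_0L;\RR)
\]
is surjective (indeed an isomorphism). Consequently the image of the composite $\tilde\ll_\ast\circ D\circ (-1)^n 2\wt{ch}$ equals the image of $\tilde\ll_\ast\circ D$ applied to all of $H^{4\bullet}(L,\d_0L;\RR)$. It therefore suffices to prove that, as we vary over pairs $(L,\tilde\ll)$ with $\ll:L\to B$ an immersion satisfying the hypotheses of Section~\ref{subsecA24}, the images of
\[
   \tilde\ll_\ast\circ D:H^{4\bullet}(L,\d_0L;\RR)\;\longrightarrow\;H_{q-4\bullet}(M,M_{\d_1B};\RR)
\]
together span the target.

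To produce enough such pairs I would invoke the main theorem of \cite{I:GMF}: the space of generalized Morse functions on a manifold $X$ has, through dimension $\dim X$, the homotopy type of $Q(BO\wedge X_+)$. Applied fiberwise (as in Corollary~2.2.2 of \cite{Second}), this identifies the space of fiberwise GMFs on $M\to B$ rel $\d_1B$ with the space of sections of a bundle over $B$ with this fiber, and shows that the map sending a fiberwise GMF $f$ to the push-forward of the Chern character of its negative-eigenspace bundle $\xi(f)$ over the singular set $\Sig(f)$ hits a spanning subset of $H_{q-4\bullet}(M,M_{\d_1B};\RR)$. Given an arbitrary class $\alpha$, I choose such an $f$ realizing (a nonzero rational multiple of) $\alpha$, take $L$ to be the closure in $\Sig(f)$ of the union of even-index Morse strata, and let $\ll:L\to B$ be the restriction of the projection (an immersion, since $L$ is a $q$-manifold mapping by a fold-free map to $B$, the fold locus lying on odd-index strata), and $\tilde\ll:L\to M$ the inclusion. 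The Hessian produces a virtual bundle on $L$ which, after multiplication by a positive integer, lifts to a map $\xi:L/\d_0L\to G/O$ by step one.

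The main obstacle will be verifying that the spanning class produced by the push-forward of the full Chern character of the Framing Principle bundle $\xi(f)$ on $\Sig(f)$ agrees, after restriction to $L$, with the class $\tilde\ll_\ast D\wt{ch}(\xi)$ appearing in the proposition. This requires a careful bookkeeping of the contributions from the various Morse index strata and the framed birth--death locus, and is precisely the content of the Framing Principle (Theorem~\ref{relative framing principle}) together with the sign and orientation conventions of the lemma preceding it: the alternating sum of push-forwards from the Morse strata reduces, after grouping adjacent indices across birth--death loci and using triviality of the trivial-index-bundle contributions, to the contribution of the even-index strata alone. Once this matching is in place, combining with the Chern-character surjectivity of step one and the GMF spanning of step two yields the proposition.
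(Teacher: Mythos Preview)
Your reduction in step (i) is correct: since $G/O\to BO$ is a rational equivalence, $\normch$ is rationally surjective on $G(L,\d_0L)$, so it suffices to show that the images of $\tilde\ll_\ast\circ D$ span as $(L,\tilde\ll)$ varies. Step (ii), invoking the GMF realization theorem, is also the right source of examples. The gap is in the passage from the singular set $\Sig(f)$ of a fiberwise GMF to a pair $(L,\tilde\ll)$ of the required form.

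Two concrete problems. First, your description of $L$ as ``the closure of the union of even-index Morse strata'' with the fold locus ``lying on odd-index strata'' is not accurate: the fold set $\Sig_0$ is the birth--death locus, which is the common boundary of $\Sig_i$ and $\Sig_{i+1}$ for each $i$, so $\Sig_+$ (even indices) has $\Sig_0$ as part of its boundary and $\pi|_{\Sig_+}$ is \emph{not} fold-free there without reparametrization. Second, and more seriously, your claim that the Framing Principle bookkeeping ``reduces \dots\ to the contribution of the even-index strata alone'' is unjustified. The alternating sum $\sum_i(-1)^i p_\ast\normch(\xi_i)$ has no reason for its odd-index terms to vanish: the negative eigenspace bundle $\xi$ on $\Sig_-$ is in general nontrivial, and the lemma you cite only reorganizes the sum, it does not kill half of it. So the spanning class coming from $\Sig(f)$ cannot be attributed to $\Sig_+$ alone by pure bookkeeping.

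The paper closes exactly this gap with an additional geometric step you have omitted: the Stratified Deformation Lemma (Lemma~\ref{stratified deformation lemma}). One first passes from the GMF data to a stratified subset $(\Sig,\psi)$ with $G/O$ coefficients, then \emph{deforms} $(\Sig,\psi)$ so that each component of $\Sig_-$ lies in a contractible subset of $\Sig$. Only after this deformation can $\psi$ be homotoped to be trivial on $\Sig_-$, at which point setting $(L,\d_0L)=(\Sig_+,\Sig_0)$ and $\tilde\ll=$ inclusion gives an immersed Hatcher datum whose image under $\tilde\ll_\ast\circ D\circ\normch$ agrees with the original $D\normch(\Sig,\psi)$ in $\ovsdgo{B}{\d_0}(M)$ (this is the content of Lemma~\ref{representation by immersed Hatcher}). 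Without that deformation, the reduction you attempt does not go through.
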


This proposition is proved below using the Arc de Triomphe construction.

\begin{thm}\label{second main theorem}
When the fiber dimension $N$ of $M\to B$ is odd and $B$ is oriented, the higher \IK-relative torsion of an exotic smooth structure $M'$ on $M$ over $(B,\d_0B)$ and the rational exotic smooth structure class $\Theta(M',M)$ are related by
\[
	D\t^\IK(M',M)=p_\ast\Theta(M',M)
\]
where $D$ is Poincar\'e duality and $p_\ast$ is the map in homology induced by $p:M\to B$. In other words, the following diagram commutes.
\[
\xymatrix{  
\pi_0\std{B}{\d_0}(M)\ar[r]^(.45){\Theta}
\ar@/_2pc/[rr]^{D\circ\t^\IK}
&
 H_{q-4\bullet}(M,M_{\d_1B})\ar[r]^{p_\ast} 
 &
 H_{q-4\bullet}(B,\d_1B)
	}
\]
\end{thm}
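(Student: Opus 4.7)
The plan is to show that both $D\circ\t^\IK$ and $p_\ast\circ\Theta$ are $\RR$-linear maps from $\pi_0\std{B}{\d_0}(M)\otimes\RR$ to $H_{q-4\bullet}(B,\d_1B)$ which agree on a spanning set, and then invoke linearity to conclude agreement everywhere. Additivity of $\Theta$ is built into its construction in \cite{Second}; additivity of relative $\t^\IK$ follows from the Additivity Axiom of Definition \ref{defn:axiomatic higher torsion} applied to stacked $h$-cobordisms. The spanning set will be the collection of exotic smooth structures $M'=\d_1E_+^{n,m}(M,\tilde\ll,\xi)$ coming from immersed Hatcher handle constructions, ranging over all triples $(\ll,\tilde\ll,\xi)$.

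The main obstacle is precisely that these immersed Hatcher examples span $\pi_0\std{B}{\d_0}(M)\otimes\RR$; equivalently, by Proposition \ref{main proposition}, that their images under $\Theta$ span $H_{q-4\bullet}(M,M_{\d_1B})$. I would prove this by the route already outlined in the paper: start with a stratified subset $(\Sig,\psi)$ of $M$, feed it into the AdT construction to produce an exotic smooth structure, separate the cancelling positive/negative Hatcher handles via the stratified deformation lemma (Lemma \ref{stratified deformation lemma}) to realize the AdT output as a sum of immersed Hatcher constructions, and finally invoke the main theorem of \cite{I:GMF} to see that singular sets of fiberwise generalized Morse functions produce enough stratified sets to span $H_{q-4\bullet}(M,M_{\d_1B})$ rationally.

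Granting this spanning statement, both sides of the identity can be computed directly on $M'=\d_1E_+^{n,m}(M,\tilde\ll,\xi)$. Theorem \ref{first main theorem} gives
\[
p_\ast\Theta(M',M)=(-1)^n p_\ast\tilde\ll_\ast D(2\normch(\xi))=(-1)^n\ll_\ast D(2\normch(\xi)),
\]
using $p\circ\tilde\ll=\ll$. For the torsion side, Theorem \ref{torsion of immersed Hatcher} gives $\t^\IK(E_+^{n,m}(M,\tilde\ll,\xi),M\times I)=(-1)^n\ll_\ast\normch(\xi)$. Because the fiber dimension $N=n+m+1$ is odd, a doubling argument parallel to the proof of Corollary \ref{cor: torsion of twisted Hatcher sphere bundle} (expressing $\t^\IK$ of an $h$-cobordism in terms of its ends and their fiberwise doubles, and using Theorem \ref{even torsion is a tangential invariant} to kill the even-dimensional double contributions against $M\times I$) promotes this to $\t^\IK(M',M)=(-1)^n\,2\ll_\ast\normch(\xi)$.

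Finally, since $\ll:(L,\d_1L)\to(B,\d_1B)$ is a codimension-zero immersion of oriented manifolds with boundary, naturality of Poincar\'e duality gives $D\circ\ll_\ast=\ll_\ast\circ D$ between the cohomology and homology of $(L,\d_1L)$ and $(B,\d_1B)$. Therefore
\[
D\t^\IK(M',M)=(-1)^n\ll_\ast D(2\normch(\xi))=p_\ast\Theta(M',M),
\]
which verifies the identity on the spanning set. Extending by linearity proves the theorem.
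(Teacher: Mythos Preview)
Your approach is essentially the paper's own: span $\pi_0\std{B}{\d_0}(M)\otimes\RR$ by immersed Hatcher constructions (Proposition \ref{main proposition}), compute both $p_\ast\Theta$ and $D\t^\IK$ on these via Theorem \ref{first main theorem} and Theorem \ref{torsion of immersed Hatcher}, and extend by linearity. The paper compresses all this into one sentence because the commutative diagram established in the proof of Theorem \ref{first main theorem} already encodes the equality on generators.

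Two small corrections. First, a numerical slip: in the paper's conventions the fiber dimension of $M$ is $N=n+m$, not $n+m+1$; it is $N=n+m$ that is assumed odd. The $h$-cobordism $E_+^{n,m}(M,\tilde\ll,\xi)$ then has even fiber dimension $N+1$, which is what makes the doubling argument go through. Second, your plan to run the doubling argument directly at the level of $M$ (invoking Theorem \ref{even torsion is a tangential invariant} on fiberwise doubles) needs the absolute torsions $\t^\IK(M)$, $\t^\IK(M')$, $\t^\IK(DM)$, etc.\ to be defined, hence $M$ unipotent, which is not part of the hypothesis. The paper avoids this by performing the factor-of-2 computation at the disk bundle level over $L$ (equations (1)--(4) in the proof of Theorem \ref{first main theorem}, applied to $E=D^n\times D^m(\eta)$ which has contractible fibers), where everything is certainly unipotent, and then transporting the result to $M$ via the naturality squares for $\Theta$ and $D(\tilde\ll)_\ast$ from \cite{Second}. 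Once you route the argument this way, your proof coincides with the paper's.
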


\begin{proof}
The map $p_\ast$ is $\RR$-linear, and Theorem \ref {first main theorem} and Proposition \ref{main proposition} above say that the immersed Hatcher construction gives generators for $\pi_0\std{B}{\d_0}(M)\otimes\RR\cong H_{q-4\bullet}(M,M_{\d_1B})$ and $p_\ast$ sends these generators to their higher relative IK-torsion. The theorem follows.
\end{proof}

We have the following immediate corollary.

\begin{cor}\label{third main theorem}
If $M$ is a smooth bundle over $B$ and both fiber and base are oriented manifolds with odd fiber dimension $N\ge 2q+3$ then the possible values of the higher \IK-relative torsion $\t^\IK(M',M)$ for $M'$ an exotic smooth structure on $M$ which agrees with $M$ over $\d_0B$ will span the image of the push-down map
\[
	p_\ast:H^{N+4\bullet}(M,\d_0M)\to 
		H^{4\bullet}(B,\d_0B)
\]
where $\d_0M=M_{\d_0B}\cup\d\vv M$.
\end{cor}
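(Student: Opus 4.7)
The plan is to deduce this corollary immediately from Theorem \ref{second main theorem} together with the rational surjectivity of the smooth structure class map $\Theta$, repackaged via Poincar\'e duality on the total space $M$.

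First I would recall the commutative triangle from Theorem \ref{second main theorem}: for every exotic smooth structure $M'$ on $M$ (agreeing with $M$ over $\d_0B$) we have $D\t^\IK(M',M)=p_\ast\Theta(M',M)$, where $\Theta(M',M)\in H_{q-4\bullet}(M,M_{\d_1B})$ and $D$ is Poincar\'e duality on $B$. Therefore, as $M'$ ranges over exotic smooth structures, the set of Poincar\'e duals $D\t^\IK(M',M)$ equals $p_\ast$ applied to the set of realized smooth structure classes $\Theta(M',M)$.

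Next I would invoke Theorem \ref{main result of GIW} (in its relative form used throughout Section 3), which says $\Theta$ gives an isomorphism $\pi_0\std{B}{\d_0}(M)\otimes\RR\xrightarrow{\cong}H_{q-4\bullet}(M,M_{\d_1B})$. In particular, every class in $H_{q-4\bullet}(M,M_{\d_1B})$ is (a real multiple of) $\Theta(M',M)$ for some exotic $M'$. Combined with the previous step, the real span of $\{D\t^\IK(M',M)\}$ therefore equals the full image $p_\ast\bigl(H_{q-4\bullet}(M,M_{\d_1B})\bigr)\subseteq H_{q-4\bullet}(B,\d_1B)$. The hypothesis $N\ge 2q+3$ (odd) is exactly what is needed to guarantee that the immersed Hatcher / Arc de Triomphe constructions used to prove Proposition \ref{main proposition} do indeed produce these exotic smooth structures, so that the rational surjectivity of $\Theta$ is not vacuous.

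Finally I would translate homology to cohomology via Poincar\'e duality on $M$. Since $\dim M=N+q$ and $\d M=M_{\d_1B}\cup\d_0M$ with $\d_0M=M_{\d_0B}\cup\d\vv M$, duality yields $H_{q-4\bullet}(M,M_{\d_1B})\cong H^{N+4\bullet}(M,\d_0M)$, and similarly $H_{q-4\bullet}(B,\d_1B)\cong H^{4\bullet}(B,\d_0B)$. Under these identifications $p_\ast$ becomes precisely the push-down $p_\ast\colon H^{N+4\bullet}(M,\d_0M)\to H^{4\bullet}(B,\d_0B)$ named in the statement, so the span of the relative torsion values equals its image. No step is especially hard; the entire content is in the earlier theorems and the only thing to double-check is the bookkeeping of the boundary decompositions under Poincar\'e duality on $M$.
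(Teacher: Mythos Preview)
Your proposal is correct and is essentially the paper's own argument spelled out in detail: the paper declares this an ``immediate corollary'' of Theorem \ref{second main theorem}, and what you have written is exactly the unpacking of that word ``immediate'' --- use $D\t^\IK=p_\ast\circ\Theta$, note that the immersed Hatcher / AdT constructions (available precisely when $N\ge 2q+3$ is odd) make the $\Theta$-images span $H_{q-4\bullet}(M,M_{\d_1B})$ (Proposition \ref{main proposition}), and then rewrite both sides via Poincar\'e--Lefschetz duality on $M$ and on $B$.

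One small wording issue: the rational surjectivity of $\Theta$ on $\pi_0\std{B}{\d_0}(M)\otimes\RR$ is about \emph{stable} exotic structures and holds regardless of $N$; the hypothesis $N\ge 2q+3$ is needed so that the Hatcher-type constructions produce genuine (unstable) exotic structures on $M$ itself, which is what the corollary asks about. You clearly have this in mind, but the phrase ``so that the rational surjectivity of $\Theta$ is not vacuous'' slightly misplaces the role of the dimension hypothesis.
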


The immersed Hatcher construction and the Arc de Triomphe do not work to produce exotic smooth structures on smooth bundles $M\to B$ with closed even dimensional fibers. If the vertical boundary $\d\vv M$ is nonempty, these constructions can be used to modify the smooth structure near the vertical boundary and we conjecture that this is the most that can be done. Nevertheless, if there is any way to produce an exotic smooth structure in the case of an even dimensional fiber, it follows easily by reduction to the odd dimensional case that the first part of Theorem \ref{second main theorem} still holds.

\begin{cor}\label{second main theorem: even case}
Theorem \ref{second main theorem} holds for even dimensional fibers. Thus:
\[
	D\t^\IK(M',M)=p_\ast\Theta(M',M)\in H_{q-4\bullet}(B,\d_1B)
\]
\end{cor}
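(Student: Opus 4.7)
The plan is to reduce the even-dimensional case to the odd-dimensional Theorem \ref{second main theorem} by stabilizing the fiber with an interval. Given $p:M\to B$ with closed even-dimensional fiber $X$ of dimension $N$ and an exotic smooth structure $M'\to B$, I would form the product bundle $\pi:M\times I\to B$ with fiber $X\times I$ of odd dimension $N+1$. The exotic structure $M'$ induces an exotic smooth structure $M'\times I$ on $M\times I$ (via $f\times\operatorname{id}_I$) which equals the product structure near the fiberwise boundary $M\times\partial I$.

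The next step is to check that both invariants are stable under the operation $M\leadsto M\times I$. For the higher torsion, since $I=D^1$ is a linear disk bundle, the stability axiom (Theorem \ref{uniqueness of exotic torsion}) combined with the additivity/handlebody formulas (Remark \ref{rem:handlebody lemma}) applied relatively give
\[
\t^\IK(M'\times I,\,M\times I)=\t^\IK(M',M)\in H^{4\bullet}(B,\d_0B).
\]
For the smooth structure class, the space $\stdone{B}(M)$ is by definition the direct limit over linear disk bundles, so the zero-section inclusion $i:M\hookrightarrow M\times I$ induces an isomorphism $\stdone{B}(M)\cong\stdone{B}(M\times I)$. Under the identification of Theorem \ref{main result of GIW}, this corresponds to the homology isomorphism $i_\ast:H_{q-4\bullet}(M)\xrightarrow{\cong}H_{q-4\bullet}(M\times I)$; hence by the naturality (stratified deformation theorem, Corollary 2.4.3 of \cite{Second}) applied to $i$,
\[
\Theta(M'\times I,\,M\times I)=i_\ast\Theta(M',M)\in H_{q-4\bullet}(M\times I,(M\times I)_{\d_1B}).
\]

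Now I would apply Theorem \ref{second main theorem} to the odd-dimensional bundle $M\times I\to B$:
\[
D\t^\IK(M'\times I,\,M\times I)=(p\pi)_\ast\Theta(M'\times I,\,M\times I).
\]
Substituting the two compatibilities from the previous step and using $\pi\circ i=\operatorname{id}_M$, so that $(p\pi)_\ast\circ i_\ast=p_\ast$, yields
\[
D\t^\IK(M',M)=p_\ast\Theta(M',M),
\]
which is the claim.

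The main obstacle is verifying the stabilization compatibility for $\Theta$, i.e.\ confirming that the direct-limit definition of $\pi_0\stdone{B}(M)$ is identified via Theorem \ref{main result of GIW} with the homology isomorphism $i_\ast$, and hence that $\Theta(M'\times I,M\times I)=i_\ast\Theta(M',M)$. Everything else is formal once this naturality under the zero-section inclusion is in hand, and it is essentially the content of the functorial properties of $\Theta$ proved in \cite{Second}. The torsion side is a straightforward consequence of stability and additivity, so the entire argument rests on importing the correct naturality statement from the companion paper.
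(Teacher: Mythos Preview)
Your proposal is correct and follows essentially the same approach as the paper: stabilize by $-\times I$ to make the fiber odd-dimensional, invoke stability of both $\t^\IK$ and $\Theta$, and apply Theorem~\ref{second main theorem}. The paper dispatches the compatibility of $\Theta$ in one line (``$\Theta$ is, by definition, a stable invariant''), whereas you spell it out via the zero-section inclusion; note a small notational slip where you first write $\pi:M\times I\to B$ but later use $\pi\circ i=\operatorname{id}_M$, which only makes sense if $\pi:M\times I\to M$ is the projection to the first factor.
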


\begin{proof}
If $M,M'\to B$ have even dimensional fibers then $M'\times I,M\times I\to B$ have odd dimensional fibers and we have:
\[
	\t^\IK(M',M)=\t^\IK(M'\times I,M\times I)
\]
since $\t^\IK$ is a stable invariant. Since the fibers of $M\times I\to B$ are odd dimensional, Theorem \ref{second main theorem} applies and
\[
	D\t^\IK(M'\times I,M\times I)=p_\ast\Theta(M'\times I,M\times I)
	\]
This is equal to $p_\ast\Theta(M',M)$ since $\Theta$ is, by definition, a stable invariant.
\end{proof}



\subsection{Arc de Triomphe 2}\label{subsecA31}

Proposition \ref {main proposition} follows from the Arc de Triomphe construction and the stratified deformation lemma \ref{stratified deformation lemma}. The Arc de Triomphe construction is an extension of the Hatcher construction which rationally stably produces all exotic smooth structures on a compact manifold bundle. The stratified deformation lemma shows that each AdT construction can be deformed into an immersed Hatcher construction.

We explained the basic construction in subsection \ref{ss:AdT}. It only remains to describe the full construction and prove the following theorem.

\begin{thm}[Arc de Triomphe Theorem]\label{AdT lemma}
The AdT construction gives virtually all stable exotic smooth structures on a compact manifold bundle with odd dimensional fibers. In other words, AdT gives all elements in a subgroup of finite index in the group of all stable exotic smooth structures.
\end{thm}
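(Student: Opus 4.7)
The plan is to use the rational isomorphism $\pi_0\std{B}{\d_0}(M) \otimes \RR \cong H_{q-4\bullet}(M, M_{\d_1 B})$ from Theorem \ref{main result of GIW} (a consequence of the \DWW\ smoothing theory developed in \cite{Second}) to translate the statement into showing that the composition
\[
	\sdgo{B}{\d_0}(M) \xrarrow{AdT} \pi_0\std{B}{\d_0}(M) \xrarrow{\Theta} H_{q-4\bullet}(M, M_{\d_1 B})
\]
is rationally surjective. Once that is established, finite-index-ness of $\mathrm{image}(AdT)$ in $\pi_0\std{B}{\d_0}(M)$ follows from the fact that the target is a finitely generated abelian group.

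The first main step is to identify $\Theta \circ AdT$. I would apply the stratified deformation lemma \ref{stratified deformation lemma} to show that for each stratified subset $(\Sig,\psi) \in \sdgo{B}{\d_0}(M)$, the bundle $AdT(\Sig,\psi)$ is fiberwise tangentially diffeomorphic (after allowed deformations) to an immersed Hatcher construction $E_+^{n,m}(M,\tilde\ll,\xi)$ parametrized by an immersion $\tilde\ll : L \to M$ over $\ll : L \to B$, where $L$ and $\xi$ are built from the pieces $\Sig_\pm$ and the coefficient data $\psi$. The reason $\ll$ is an immersion rather than an embedding is exactly the one noted in the discussion of immersed Hatcher handles: the AdT cancellation pairs up two handles in a common fiber, producing points $b\in B$ with $|\ll^{-1}(b)| \geq 2$. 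Granting this identification, Theorem \ref{first main theorem} yields
\[
	\Theta(AdT(\Sig,\psi)) \;=\; (-1)^n\, \tilde\ll_\ast D(2\normch(\xi)) \;=\; (-1)^n\, 2\, D\normch(\Sig,\psi),
\]
which is the content of Lemma \ref{Th AdT=2ch}.

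The second main step is to produce enough stratified subsets so that $(-1)^n 2D\normch$ already surjects rationally. Here I would invoke the main theorem of \cite{I:GMF}, which identifies the space of GMFs on $X$ through dimension $\dim X$ with $Q(BO \wedge X_+)$; by the standard homotopy-theoretic argument recorded as Corollary 2.2.2 of \cite{Second}, the stratified sets $(\Sig(f),\xi(f))$ arising from fiberwise GMFs $f:M\to\RR$ realize every class in $H_{q-4\bullet}(M,M_{\d_1B})$ up to a nonzero rational scalar in each degree. Using the rational equivalence $BO \simeq_\QQ G/O$, a suitable multiple of the Hessian classifying map $\Sig \to BO$ lifts to $G/O$, producing genuine elements of $\sdgo{B}{\d_0}(M)$ whose $D\normch$ images span $H_{q-4\bullet}(M,M_{\d_1B}) \otimes \RR$. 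Combined with the previous step, this forces $\Theta \circ AdT$ to have image of full rank, which via the isomorphism $\Theta$ gives the theorem.

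The main obstacle is clearly the stratified deformation lemma \ref{stratified deformation lemma}. Geometrically, one must show that the cancellation enforced in AdT along the fold set $\Sig_0$, where a positive and a negative Hatcher handle sit in a common fiber, can be smoothly split apart into the separated-handle configuration of an immersed Hatcher construction without changing the tangential homeomorphism class of the resulting bundle over $B$ (rel $\d_0 B$). A secondary technical point is checking that $AdT$ is genuinely a homomorphism on $\sdgo{B}{\d_0}(M)$ (so that the word ``spanning'' can be upgraded to ``finite index''), which is the content of Proposition \ref{sd is a group}; this should follow from disjoint-support additivity of the Hatcher handle constructions together with the deformation lemma, but needs to be carried out carefully.
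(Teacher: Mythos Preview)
Your proposal is correct and follows essentially the same route as the paper: reduce via $\Theta$ to showing $\Theta\circ AdT=(-1)^n2D\normch$ (Lemma~\ref{Th AdT=2ch}, proved by passing through immersed Hatcher via Lemma~\ref{representation by immersed Hatcher} and the stratified deformation lemma), and then show $D\normch$ is rationally surjective using fiberwise GMFs (Lemma~\ref{first version of AdT Lemma}). One small clarification: the stratified deformation lemma is a statement purely about deforming the stratified subset $(\Sigma,\psi)$ so that each component of $\Sigma_-$ lies in a contractible piece of $\Sigma$; the passage to an immersed Hatcher construction then comes in a separate step by homotoping $\psi$ to be trivial on $\Sigma_-$, so that the negative Hatcher handles become standard disk bundles and can be discarded---this is how the paper's Lemma~\ref{representation by immersed Hatcher} is organized, rather than directly ``splitting apart'' the cancelling handle pairs.
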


\begin{rem}
If $M\to B$ is a smooth bundle whose fibers are even dimensional, the AdT construction rationally stably produces all exotic smooth structures on $M\times I\to B$. By definition these are stable smooth structures on $M\to B$. So, the theorem implies that \emph{the AdT construction produces virtually all stable smooth structures on all compact manifold bundles}.
\end{rem}


\subsubsection{AdT construction}

The Arc de Triomphe construction goes as follows. Suppose that $M\to B$ is a smooth manifold bundle over a compact oriented $q$-manifold $B$ with odd fiber dimension $N=n+m$ where $m>n>q$. Suppose $\d B=\d_0B\cup \d_1B$ where $\d_0B, \d_1B$ meet along their common boundary. Then we will construct elements of $\utd{B,\d_0B}(M)$, the space of exotic smooth structures on $M$ relative to $\d_0M=M_{\d_0B}\cup \d\vv M$.

\begin{defn}\label{stratified set}
By a {\bf stratified set} \emph{over $B$ with coefficients in $X$} we mean a pair $(\Sig,\psi)$ where $\Sig$ is a compact smooth oriented $q$ manifold together with a smooth mapping $\pi:\Sig\to B$ and $\psi:\Sig\to X$ is a continuous mapping satisfying the following.
\begin{enumerate}
\item $\pi$ sends $\d \Sig$ to $\d B$.
\item $\pi:\Sig\to B$ has only fold singularities, i.e. it is given in local coordinates near critical points by $\pi(x_1,\cdots,x_q)=(x_1^2,x_2,\cdots,x_q)$, and the singular set $\Sig_0$ is a $q-1$ submanifold of $\Sig$ transverse to $\d \Sig$. 
\end{enumerate}
Let $\Sig_+$ and $\Sig_-$ denote the closures of the subsets of $\Sig-\Sig_0$ on which the map $\pi:\Sig\to B$ is orientation preserving and orientation reversing, respectively. Thus $\Sig_-\cap \Sig_+=\Sig_0$ and $\Sig_-\cup \Sig_+=\Sig$.

We say that $(\Sig,\psi)$ is a {\bf stratified subset} of a smooth bundle $M$ over $B$ if $\Sig$ is a smooth submanifold of $M$ and $\pi:\Sig\to B$ is the restriction of $p:M\to B$.
\end{defn}

\begin{rem}\label{rem: vertical vector field}
For any stratified subset $(\Sig,\psi)$ in $M$ over $B$, there is a nowhere zero vertical vector field $v$ along $\Sig_0$ which points from $\Sig_-$ to $\Sig_+$. If the fiber dimension is greater than the base dimension, this vector field extends to a nowhere zero vertical vector field on all of $\Sig$. If the fiber dimension is at least two more than the base dimension, this extension is unique up to homotopy.
\end{rem}

Let $\sd^X_{B,\d_0}(M)$ be the set of stratified deformation classes of stratified subsets $(\Sig,\psi)$ of $M$ over $B$ with coefficients in $X$ so that $\pi(\Sig)$ is disjoint from $\d_0B$. By a {\bf stratified deformation} of stratified subsets $(\Sig,\psi)\simeq(\Sig',\psi')$ of $M$ we mean a stratified subset $(S,\Psi)$ of $M\times I$ over $B\times I$ with coefficients in $X$ so that the image of $S$ in $B\times I$ is disjoint from $\d_0B\times I$ and so that $(\Sig,\psi),(\Sig',\psi')$ are the restrictions of $(S,\Psi)$ to $B\times 0,B\times 1$ respectively. 

Here are two examples that we will use later in the proof of the Stratified Deformation Lemma \ref{stratified deformation lemma}. In both cases, $M=B\times J$ where $J\subset\RR$ is one dimensional. Using Remark \ref{rem: vertical vector field} we will be able to embed these examples into a general stratified subset with sufficiently large fiber dimension.

\begin{eg}[$k$-lens]\label{eg:k-lens} By a \emph{$k$-lens} we mean a stratified set $\Sig$ diffeomorphic to $S^k$ with $\Sig_+$ and $\Sig_-$ both diffeomorphic to $D^k$. Here is an explicit example. Let $M=B\times J$ where $(B,\d_0B)=(D^k,S^{k-1})$ and $J=[0,1]$. Let $\Sig$ be the ellipsoid given by
\[
	\Sig=\{(x,h)\in D^k\times [0,1]\,:\, ||2x||^2+(4h-2)^2=1\}
\] 
with $\Sig_+$ given by $h\ge\frac12$ and $\Sig_-$ given by $h\le\frac12$. This set can also be given in polar coordinates by the equation
\[
	||2r||^2+(4h-2)^2=1
\]
where $(r,\th,h)\in [0,1]\times S^{k-1}\times J$. Since $\th\in S^{k-1}$ does not occur in the equation, the set $\Sig$ is given by \emph{spinning} the subset of the $r,h$-plane given be the above equation.
\begin{figure}[htbp]
\begin{center}
%
{
\setlength{\unitlength}{2cm}
{\mbox{
\begin{picture}(2,1)
      \thicklines
      \qbezier(1,.25)(2,.25)(2,.5)
    \thinlines
      \qbezier(1,.75)(2,.75)(2,.5)
      \put(2,0){
      \thicklines
      \qbezier(-1,.25)(-2,.25)(-2,.5)
    \thinlines
      \qbezier(-1,.75)(-2,.75)(-2,.5)
      }
      \put(2,.2){$\Sigma_-$}
      \put(2,.7){$\Sigma_+$}
     %
     \put(1,0){
     \line(0,1){1} 
      \qbezier(-.2,0)(-.2,-.1)(0,-.1)  
     \qbezier(.2,0)(.2,-.1)(0,-.1)  
     \qbezier(.2,0)(.19,-.05)(.22,-.06)
     \qbezier(.2,0)(.19,-.05)(.14,-.03)
     \put(.3,-.1){$S^{k-1}$}
   }
\end{picture}}
}}
\label{fig:k-lens}
\end{center}
\end{figure}
\end{eg}

\begin{eg}[mushroom]\label{eg:mushroom}
Let $M=B\times J$ where $(B,\d_0B)=(D^{k},\emptyset)$ and $J=[-3,3]$. Let $\Sigma\subset M\times (-1,1)$ be the stratified deformation given in polar coordinates $(r,\th,h,t)\in [0,1]\times S^{k+1}\times J\times (-1,1)$ by the equation
\[
	4(r^2+t^2)=h-\frac{h^3}3+1
\]
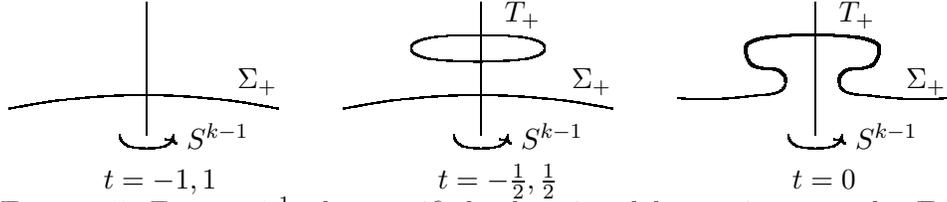
\begin{figure}[htbp]
\begin{center}
%
{
\setlength{\unitlength}{.7in}
{\mbox{
\begin{picture}(7,1.2)
\put(0,.2){ 
    \thinlines
      \qbezier(1,.3)(1.5,.3)(2,.2)
      \put(2,0){
            \qbezier(-1,.3)(-1.5,.3)(-2,.2)
      }
      \put(1.7,.35){$\Sigma_+$}
     %
     \put(.95,0){ 
     \line(0,1){1} 
      \qbezier(-.2,0)(-.2,-.1)(0,-.1)  
     \qbezier(.2,0)(.2,-.1)(0,-.1)  
     \qbezier(.2,0)(.19,-.05)(.22,-.06)
     \qbezier(.2,0)(.19,-.05)(.14,-.03)
     \put(.3,-.1){$S^{k-1}$}
   } 
   \put(.7,-.4){$t=-1,1$}
   } 
\put(2.5,.2){ 
    \thinlines
      \qbezier(1,.75)(1.5,.75)(1.5,.65)
      \qbezier(1,.55)(1.5,.55)(1.5,.65)
      \qbezier(1,.3)(1.5,.3)(2,.2)
      \put(2,0){
      \qbezier(-1,.75)(-1.5,.75)(-1.5,.65)
      \qbezier(-1,.55)(-1.5,.55)(-1.5,.65)
            \qbezier(-1,.3)(-1.5,.3)(-2,.2)
      }
      \put(1.7,.35){$\Sigma_+$}
      \put(1.2,.85){$T_+$}
     %
     \put(.95,0){ 
     \line(0,1){1} 
      \qbezier(-.2,0)(-.2,-.1)(0,-.1)  
     \qbezier(.2,0)(.2,-.1)(0,-.1)  
     \qbezier(.2,0)(.19,-.05)(.22,-.06)
     \qbezier(.2,0)(.19,-.05)(.14,-.03)
     \put(.3,-.1){$S^{k-1}$}
   } 
   \put(.7,-.4){$t=-\frac12,\frac12$}
   } 
   \put(5,.2){ 
      \thicklines
      \qbezier(1,.75)(1.5,.75)(1.5,.65)
      \qbezier(1.35,.5)(1.5,.5)(1.5,.65)
      \qbezier(1.35,.5)(1.2,.5)(1.2,.4)      
    \thinlines
      \qbezier(1.4,.3)(1.8,.25)(2,.28)
      \qbezier(1.4,.3)(1.2,.3)(1.2,.4)      
      \put(2,0){
      \thicklines
      \qbezier(-1,.75)(-1.5,.75)(-1.5,.65)
      \qbezier(-1.35,.5)(-1.5,.5)(-1.5,.65)
      \qbezier(-1.35,.5)(-1.2,.5)(-1.2,.4)      
     \thinlines
      \qbezier(-1.4,.3)(-1.8,.25)(-2,.28)
     \qbezier(-1.4,.3)(-1.2,.3)(-1.2,.4)      
      }
      \put(1.7,.35){$\Sigma_+$}
      \put(1.2,.85){$T_+$}
     %
     \put(.95,0){ 
     \line(0,1){1} 
      \qbezier(-.2,0)(-.2,-.1)(0,-.1)  
     \qbezier(.2,0)(.2,-.1)(0,-.1)  
     \qbezier(.2,0)(.19,-.05)(.22,-.06)
     \qbezier(.2,0)(.19,-.05)(.14,-.03)
     \put(.3,-.1){$S^{k-1}$}
   } 
   \put(.85,-.4){$t=0$}
   } 
\end{picture}}
}}
\caption{For $t=\pm\frac12$, the stratified subset is a $k$-lens union a regular $\Sig_+$ component. For $t=\pm1$, this $k$-lens disappears. For $t=0$, the rotated shape resembles a \emph{mushroom}. We call the deformation $t:-1\to 0$ ``planting a mushroom.''
}
\label{fig: mushroom}
\end{center}
\end{figure}
\end{eg}


A stratified subset is $\Sig\subset M$ together with a continuous mapping $\psi:\Sig\to X$. The coefficient spaces $X$ that we are interested in are $X=BSO$, classifying oriented stable vector bundles over $\Sig$ and $X=G/O=SG/SO$ classifying vector bundles with homotopy trivializations of the corresponding spherical fibration. The latter is the input for Hatcher's construction and the Arc de Triomphe construction will be a mapping
\[
	AdT:\sdgo{B}{\d_0}(M)\to \std{B}{\d_0}(M)
\]
The claim is that this map is rationally split surjective. In other words, rationally stably, all exotic tangential smoothings on $M$ are given by the construction that we will now give.

The idea of the construction is to attach negative Hatcher handles along $\Sig_-$ and positive Hatcher handles along $\Sig_+$ and have them cancel along $\Sig_0$. The map $\psi:\Sig\to G/O$ gives the bundle $\xi$ in the Hatcher handle.

Suppose that $m>n>q$ and $M\to B$ is a smooth bundle with fiber dimension $m+n$ which we assume is odd ($2q+3$ is the minimum). Suppose we have a stratified subset $\Sig\subset M$ with coefficient map $\psi:\Sig\to G/O$. This gives a stable vector bundle $\xi$ over $\Sig$. Let $\eta$ be the unique $m$-plane bundle over $\Sig$ isomorphic to the pull-back of the vertical tangent bundle of $M$ and let $\eta_-,\eta_+,\eta_0$ be the restrictions of $\eta$ to $\Sig_-,\Sig_+,\eta_0$. Then we have an embedding
\[
	D(\tilde\pi_+):D^n\times D^m(\eta_+)\into M
\]
lying over the restriction $\pi_+:\Sig_+\to B$ of $\pi$ to $\Sig_+$. This gives a tubular neighborhood of $\Sig_+$. Replacing $+$ with $-$ we get $D(\tilde\pi_-)$ lying over $\pi_-$ giving a thickening of $\Sig_-$. The embeddings $D(\tilde\pi_+)$ and $D(\tilde\pi_-)$ are disjoint except near $\Sig_0$. To correct this we move $D(\tilde\pi_-)$ slightly in the fiber direction near $\Sig_0$ so that the images of $D(\tilde\pi_+)$ and $D(\tilde\pi_-)$ are disjoint everywhere. We do this move systematically by moving in the direction of, say, the last coordinate vector $e_n$ in $D^n$. The result will be that the image of $D(\tilde\pi_-)$ will no longer contain $\Sig_-$ close to $\Sig_0$.

Do this in such a way that there is an embedding
\[
	D(\tilde\pi_0):D^n\times D(\eta_0)\to M
\]
so that $D(\tilde\pi_-)(x,y)=D(\tilde\pi_0)(\tfrac14(x+2e_n),y)$ and $D(\tilde\pi_+)(x,y)=D(\tilde\pi_0)(\tfrac14(x-2 e_n),y)$. Or, start with embedding $D(\tilde\pi_0)$ and move the mappings $D(\tilde\pi_+),D(\tilde\pi_-)$ vertically (along the fibers) so that they land in the two halves of the image of $D(\tilde\pi_0)$ as indicated.

Take the bundle $M\times I$ over $B$ and, using the map $D(\tilde\pi_+)$ we attach the positive Hatcher handle $B^{n,m}(\xi,\eta_+)$ along its base $\d_0B^{n,m}(\xi,\eta_+)=D^n\times D^m(\eta_+)\times0$ to the top $M\times 1$ of $M\times I$. Then we attach the negative Hatcher handle $A^{n,m}(\xi,\eta_-)$ to the top of $M\times I$ using the composite map
\[
	E^{n,m}(\xi,\eta_-)\xrarrow{F(j)}D^n\times D^m(\eta_-)\xrarrow{D(\tilde\pi_-)}M
\]
Since the images of $D(\tilde\pi_+)$ and $D(\tilde\pi_-)$ are disjoint, these attachments are disjoint.

Over $\pi(\Sig_0)$ we have a positive and negative Hatcher handle attached on the interior of the image of $D(\tilde\pi_0)$. Next, we slide the attachment map for the negative Hatcher handle until it ``cancels'' the positive Hatcher handle. It is very easy to see how this works. Over $\Sig_0$ the negative Hatcher handle $A^{n,m}(\xi,\eta_0)$ is attached along its base $\d_0A^{n,m}(\xi,\eta_0)=E^{n,m}(\xi,\eta_0)$ and the positive Hatcher handle is 
\[
	B^{n,m}(\xi,\eta_0)=D^n\times D^m(\eta_0)\cup_{h\times1}E^{n,m}(\xi,\eta_0)\times [1,2]
\]
By Lemma \ref{third trivial lemma}, we can slide the base $E^{n,m}(\xi,\eta_0)$ of $A^{n,m}(\xi,\eta_0)$ along the top of the $M\times 1\cup B^{n,m}(\xi,\eta_+)$ until it is equal to $E^{n,m}(\xi,\eta_0)\times2\subseteq  B^{n,m}(\xi,\eta_0)$. We can do this in a precise way since we are working inside of the model which is the image of $D(\tilde\pi_0)$ in $M\times 1$. We extend this deformation (arbitrarily) to $A^{n,m}(\xi,\eta_-)$. Then we will have the desired bundle over $B$ whose fibers are $h$-bordisms with base equal to the original bundle $M$. We call this new bundle $W(\Sig,\psi)$ (suppressing $n,m$):
\[
	W(\Sig,\psi)=M\times I\cup B^{n,m}(\xi,\eta_+)\cup A^{n,m}(\xi,\eta_-)
\]
To be sure, we need to round off the corners. And we also need to taper off the cancelling Hatcher handles along $\Sig_0$. But, along $\Sig_0$, the two Hatcher handles cancel and we have a local diffeomorphism of $W(\Sig,\psi)$ with $M\times I$ near $\Sig_0$. Using this diffeomorphism we can identify $W$ with $M\times I$ along this set and we have a smooth bundle over $B$. The local diffeomorphism exists by Proposition \ref {second AdT cancellation lemma}. The reason that we have a bundle at the end is because, in a neighborhood of the AdT construction along $\Sig_0$ we either have two Hatcher handles, which are a smooth continuation of what we have at $\Sig_0$ or we have $M\times I$ locally (which means we are only looking at the portion in the image of $D(\tilde\pi_0)$) and there we are using the diffeomorphism given by Proposition \ref {second AdT cancellation lemma} to identify $M\times I$ with the $M\times I$ with the pair of Hatcher handles attached. So, we have local triviality and thus a smooth bundle $W\to B$. Let
\[
	AdT(\Sig,\psi)=top(W(\Sig,\psi))
\]
with tangential homeomorphism given by $W$.
If we have any deformation of $(\Sig,\psi)$ then we can apply the same construction to this stratified set over $B\times I$ and we get a isotopy between the two constructions showing that $AdT(\Sig,\psi)$ changes by an isotopy. 

\begin{prop}\label{sd is a group}
(a) The AdT construction as described above gives a well defined mapping
\[
	AdT:\sdgo{B}{\d_0}(M)\to \pi_0\std{B}{\d_0}(M)
\]
from the set of stratified deformation classes of stratified subsets $(\Sig,\psi)$ of $M$ with coefficients in $G/O$ to the space of stable tangential smoothings of $M$. 

(b) This mapping is a homomorphism of additive groups where addition in $\sdgo{B}{\d_0}(M)$ is given by disjoint union and addition in $ \pi_0\std{B}{\d_0}(M)$ is given by the little cubes operad on the stabilization.
\end{prop}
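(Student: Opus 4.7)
Part~(a) splits into two claims: (i) for a fixed $(\Sigma,\psi)$, the $h$-cobordism bundle $W(\Sigma,\psi)$ is well-defined up to fiberwise diffeomorphism, so $AdT(\Sigma,\psi)\in\pi_0\std{B}{\d_0}(M)$ is unambiguous; and (ii) stratified deformation-equivalent inputs give equivalent outputs. For (i), I would track each ingredient of the construction in turn: the tubular neighborhood embeddings $D(\tilde\pi_+),D(\tilde\pi_-),D(\tilde\pi_0)$, the Hatcher handle models $B^{n,m}(\xi,\eta_+)$ and $A^{n,m}(\xi,\eta_-)$, and the cancellation near $\Sigma_0$ supplied by Proposition~\ref{second AdT cancellation lemma}. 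Under the standing hypotheses $m>n>q$ and (after one further stabilization if necessary) $m\ge q+3$, each of these choices is unique up to fiberwise isotopy by Lemma~\ref{twisted embedding lemma} and the uniqueness statements from Section~\ref{ss:AdT}. Chaining these isotopies yields a fiberwise diffeomorphism between any two realizations of $W(\Sigma,\psi)$, hence a well-defined element of $\pi_0\std{B}{\d_0}(M)$.

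For (ii), I would apply the AdT construction verbatim to a stratified deformation $(S,\Psi)\subset M\times I$ over $B\times I$, which is itself a stratified subset in the sense of Definition~\ref{stratified set}. This produces an $h$-cobordism bundle $W(S,\Psi)$ over $B\times I$ restricting to $W(\Sigma,\psi)$ and $W(\Sigma',\psi')$ over the two endpoints, and its top face is an isotopy of stable tangential smoothings, giving the required equality in $\pi_0\std{B}{\d_0}(M)$.

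Part~(b). The disjoint-union operation on $\sdgo{B}{\d_0}(M)$ is well defined after a generic perturbation making the underlying submanifolds disjoint in $M$ (possible because $\dim\Sigma_i<\dim M$), is commutative, and has the empty stratified set as identity. For inverses, reversing the orientation of $(\Sigma,\psi)$ interchanges $\Sigma_+$ with $\Sigma_-$; the union $(\Sigma,\psi)\sqcup(-(\Sigma,\psi))$ is then stratified deformation equivalent to the empty set via a fiberwise ``mushroom planting'' modelled on Example~\ref{eg:mushroom}, exhibiting the group structure. The target $\pi_0\std{B}{\d_0}(M)$ carries its abelian group structure from the little cubes operation on stabilized smoothings as in \cite{Second}. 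Additivity of $AdT$ is then immediate for disjoint $\Sigma_1,\Sigma_2\subset M$: the positive and negative Hatcher handles attached for the two pieces lie in disjoint fiberwise neighborhoods, so $W(\Sigma_1\sqcup\Sigma_2)$ is literally the side-by-side union of $W(\Sigma_1)$ and $W(\Sigma_2)$, which matches the little cubes sum once the two constructions are inserted into disjoint cubes of a stabilized disk bundle.

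\textbf{Main obstacle.} I expect the hardest step to be part~(ii) of (a) when the singular locus $\Sigma_0$ undergoes birth-death events along the deformation $S$, since there the cancellation Proposition~\ref{second AdT cancellation lemma} is invoked on a local model whose topology is changing; the content is to check that the ``tapering off'' procedure of Subsection~\ref{subsecA24} extends continuously across such events. The construction of inverses via the mushroom of Example~\ref{eg:mushroom} also requires care to match the coefficient map $\psi$ coherently on the two oppositely oriented copies of $\Sigma$, not merely the underlying submanifolds.
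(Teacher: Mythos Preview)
Your treatment of part~(a) is more detailed than the paper's; the paper essentially absorbs (a) into the paragraph preceding the proposition, noting only that applying the construction to a stratified deformation over $B\times I$ gives an isotopy. Your additivity argument in (b) is also essentially the paper's.

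The real gap is in your construction of inverses. You propose that $-(\Sigma,\psi)$, the orientation-reversed copy, is an inverse because the union $(\Sigma,\psi)\sqcup(-(\Sigma,\psi))$ is null-deformable via ``mushroom planting.'' This is not justified: away from $\Sigma_0$ two parallel oppositely-oriented sheets do collapse as a lens, but near the fold set $\Sigma_0$ both copies carry folds, and Example~\ref{eg:mushroom} does not provide a local model for cancelling that configuration. The coefficient map $\psi$ also does not obviously match across such a putative deformation.

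The paper uses a different and more indirect trick. Using the vertical vector field of Remark~\ref{rem: vertical vector field} it builds, inside a ribbon $R_+\cong\Sigma_+\times I$, a stratified set $S$ with $S_+=\Sigma_+\times\tfrac13$ and $S_-=\Sigma_+\times\tfrac23$ (merged near $\Sigma_0$), and similarly $T$ from $\Sigma_-$. Each of $S,T$ is separately null-deformable by collapsing $\tfrac13,\tfrac23\to\tfrac12$, so represents $0$. A mushroom merge along $\Sigma_0$ then deforms $S\cup T$ to the disjoint union of $(\Sigma,\psi)$ with another stratified set $(U,\psi')$, which is therefore the inverse. As recorded in Remark~\ref{formula for inverse of stratified set}, this inverse has $\psi'=\psi\circ\rho$ for a map $\rho:\Sigma'\to\Sigma$ sending part of $\Sigma'_+$ onto the interior of $\Sigma_-$ and vice versa; in particular it is \emph{not} simply the orientation-reversed copy. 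Your proposal would need either to supply the missing local cancellation near $\Sigma_0$ or to adopt this ribbon-doubling argument.
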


\begin{proof} It is clear that $\sdgo{B}{\d_0}(M)$ is a monoid with addition given by disjoint union using transitivity to make any two stratified subsets of $M$ disjoint by a small perturbation. We also have additive inverses given as follows.

For any stratified subset $(\Sig,\psi)$ in $M$, we claim that there are  stratified subsets $(S,\Psi), (T,\Psi)$ each deformable to the empty set by a stratified deformation, making them equal to zero in the group $\sdgo{B}{\d_0}(M)$, and so that $(S\cup T,\Psi)$ is also deformable into the disjoint union of $(\Sig,\psi)$ and another stratified subset $(U,\psi')$ of $M$. This makes $(U,\psi')$ the additive inverse of $(\Sig,\psi)$. The construction of $(S,\Psi), (T,\Psi)$ is as follows.

By Remark \ref{rem: vertical vector field}, there is a nowhere zero vertical vector field $v$ along $\Sig$ so that, along $\Sig_0$, it point from $\Sig_-$ to $\Sig_+$. Using this vector field, we can embed a ``ribbon'' $R_+\cong\Sig_+\times I$ in $M$ so that $R_+$ contains $\Sig_+$ as $\Sig_+\times 0$. In this ribbon we take $S=S_+\cup S_-$ where $S_+=\Sig_+\times \frac13$ and $S_-=\Sig_+\times\frac23$. As we approach $\Sig_0$ we replace $\frac13,\frac23$ by numbers converging to $\frac12$. Let $\Psi:S\to X$ be given by $\Psi(x,t)=\psi(x)$ for all $(x,t)\in \Sig_+\times I$. \vs2

\ul{Claim 1}. $(S,\Psi)$ is deformable into the empty set.

Pf: Deform $S$ inside the ribbon by letting the coordinates $\frac13,\frac23$ converge to $\frac12$. Extend $\Psi$ using the same equation. This gives the null deformation.\vs2

Construct $(T,\Psi)$ inside of the ribbon $R_-\cong \Sig_-\times I$ containing $\Sig_-$ as $\Sig_-\times 1$ in a similar way.\vs2

\ul{Claim 2}. $(S\cup T,\Psi)$ is deformable into the disjoint union of $(\Sig,\psi)$ and another stratified subset of $M$.

Pf: Along $\Sig_0$ we can merge the bottom of $S$ with the top of $T$, just like the deformation $t=\frac12$ to $t=0$ in Example \ref{eg:mushroom} above. This is illustrated in the following diagram.
\begin{figure}[htbp]
\begin{center}
%
{
\setlength{\unitlength}{1in}
{\mbox{
\begin{picture}(4,1)
      \thicklines
%
\put(0,0.3){ 
\qbezier(.1,.5)(1,1.3)(1.8,.5)
\qbezier(.1,.5)(.05,.45)(0.1,.4)
\qbezier(0.1,.4)(.13,.38)(.2,.45)
\qbezier(.1,.5)(.05,.45)(0.1,.4)
\qbezier(1.8,.4)(1.77,.38)(1.7,.45)
\qbezier(1.8,.5)(1.85,.45)(1.8,.4)
\qbezier(.2,.45)(1,1.15)(1.7,.45)
\put(.3,.8){$S_-$}
\put(.6,.6){$S_+$}
}
\put(0,1){
\qbezier(.1,-.5)(1,-1.3)(1.8,-.5)
\qbezier(.1,-.5)(.05,-.45)(0.1,-.4)
\qbezier(0.1,-.4)(.13,-.38)(.2,-.45)
\qbezier(.1,-.5)(.05,-.45)(0.1,-.4)
\qbezier(1.8,-.4)(1.77,-.38)(1.7,-.45)
\qbezier(1.8,-.5)(1.85,-.45)(1.8,-.4)
\qbezier(.2,-.45)(1,-1.15)(1.7,-.45)
\put(.3,-.9){$T_+$}
\put(.6,-.65){$T_-$}
} 
\put(2,.6){$\Rightarrow$}
\put(2.3,0){ 
\put(0,0.3){
\qbezier(.1,.5)(1,1.3)(1.8,.5)
\qbezier(.1,.5)(.05,.45)(0.1,.4)
\qbezier(0.2,.25)(.13,.35)(.2,.45) 
\qbezier(1.7,.25)(1.77,.35)(1.7,.45) 
\qbezier(0.1,.4)(.13,.35)(.1,.3) 
\qbezier(1.8,.4)(1.77,.35)(1.8,.3) 
\qbezier(.1,.5)(.05,.45)(0.1,.4)
\qbezier(1.8,.5)(1.85,.45)(1.8,.4)
\qbezier(.2,.45)(1,1.15)(1.7,.45)
\put(.6,.6){$\Sig_+$}
}
\put(0,1){
\qbezier(.1,-.5)(1,-1.3)(1.8,-.5)
\qbezier(.1,-.5)(.05,-.45)(0.1,-.4)
\qbezier(.1,-.5)(.05,-.45)(0.1,-.4)
\qbezier(1.8,-.5)(1.85,-.45)(1.8,-.4)
\qbezier(.2,-.45)(1,-1.15)(1.7,-.45)
\put(.6,-.65){$\Sig_-$}
}
}
\end{picture}}
}}
\caption{The sum of the trivial stratified sets $(S,\Psi),(T,\Psi)$ deform to the disjoint union of $(\Sig,\psi)$ and another stratified set.}
\label{figure eight}
\end{center}
\end{figure}
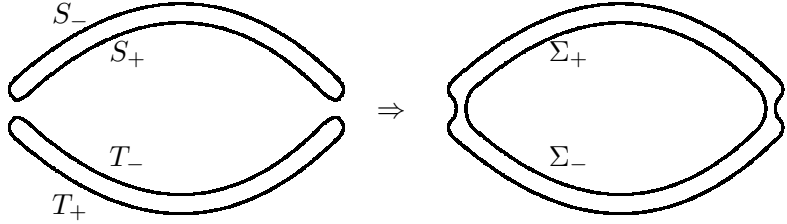

To show that the mapping $AdT$ is additive, we take two smooth structures $\th_1,\th_2$ on the stabilized $M\times D^{2k-1}\times I$ which by the stabilization construction are equal to the original smooth structure on $\d\vv (M\times D^{2k-1})\times I\cup M\times D^{2k-1}\times 0$ and on the complements of $E_1\times D^{2k}$ and $E_2\times D^{2k}$ respectively. By transversality, these two subsets, the supports of the two exotic smooth structures are disjoint. Therefore, by Proposition 1.5.10 of \cite{Second}, $\th_1+\th_2$ is given by changing the smooth structure of both $E_1$ and $E_2$. This show that $SdT$ is additive.
\end{proof}

\begin{rem}\label{formula for inverse of stratified set} 
The proof above shows that the inverse of $(\Sig,\psi)\in \sdgo{B}{\d_0}(M)$ has the form $(\Sig',\psi')$ where $\psi'$ is the composition 
\[
\Sig'\xrarrow \r\Sig\xrarrow \psi G/O
\]
Where $\r:\Sig'\to \Sig$ maps a subset $U_+\subset\Sig_+'$ homeomorphically onto the interior of $\Sig_-$ and a subset $U_-\subset\Sig_-'$ homeomorphically onto the interior of $\Sig_+$. Furthermore, the restriction of $\r$ to $U_+\cup U_-$ is compatible with the projection to $B$.
\end{rem}

\begin{prop}\label{prop: induced map on ovAdT}
If $\psi:\Sig\to G/O$ is trivial then so is $AdT(\Sig,\psi)$. Therefore, $AdT$ induces a homomorphism
\[
	\ov{AdT}:\ovsdgo{B}{\d_0}(M)\to \pi_0\std{B}{\d_0}(M)
\]
Where $\ovsdgo{B}{\d_0}(M)$ is the quotient of $\sdgo{B}{\d_0}(M)$ by all $(\Sig,\psi)$ where $\psi$ is null homotopic.
\end{prop}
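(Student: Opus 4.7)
The plan is to reduce the claim to the case where $\psi$ is constant at the basepoint of $G/O$, where the AdT construction manifestly produces a trivial $h$-cobordism bundle. For the reduction, a null-homotopy $H : \Sig \times I \to G/O$ from $\psi$ to the constant map $\psi_0$ gives a stratified subset $(\Sig \times I, H)$ of $M \times I$ over $B \times I$ (the projection to $B \times I$ still has only fold singularities because the projection from $\Sig$ does), hence a stratified deformation from $(\Sig, \psi)$ to $(\Sig, \psi_0)$. By part (a) of Proposition \ref{sd is a group}, $AdT$ is well-defined on stratified deformation classes, so it suffices to show $AdT(\Sig, \psi_0) = 0$.

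Next I would exploit that when $\psi_0$ is constant, the associated stable vector bundle $\xi$ over $\Sig$ may be taken to be the trivial $n$-plane bundle together with its standard trivialization of the spherical fibration. Under this choice, the embedding $j$ of Lemma \ref{twisted embedding lemma} can be taken to be the standard linear embedding, so Hatcher's disk bundle $E^{n,m}(\xi, \eta) \cong D^n \times D^m(\eta)$ reduces to a linear disk bundle. Consequently the positive and negative Hatcher handles $B^{n,m}(\xi, \eta_\pm)$ and $A^{n,m}(\xi, \eta_\pm)$ appearing in the AdT construction become fiberwise diffeomorphic to products of linear disk bundles with intervals.

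Finally I would check that attaching only such trivial handles to $M \times I$ yields a bundle fiberwise diffeomorphic to $M \times I$. Along $\Sig_\pm$ each attachment amounts to extending a collar, which by Lemma \ref{first trivial lemma} is a product after rounding corners. Along $\Sig_0$ the cancellation identification of Proposition \ref{second AdT cancellation lemma} applies for any $\xi$ (its proof does not use nontriviality) and is compatible with the product structure in the bulk. The hard part will be the bookkeeping of the tapering-off procedure where several Hatcher handles meet or cancel over the same fiber of $B$; however, since every constituent piece is linear, this reduces to standard corner-rounding. Thus $AdT(\Sig, \psi_0)$ is represented by the trivial $h$-cobordism $M \times I$, giving $AdT(\Sig, \psi) = AdT(\Sig, \psi_0) = 0$ in $\pi_0 \std{B}{\d_0}(M)$, and the induced factorization $\ov{AdT}$ follows since $AdT$ is already a homomorphism by Proposition \ref{sd is a group}(b).
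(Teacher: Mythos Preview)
Your proposal is correct and follows essentially the same approach as the paper. The paper's proof is a single sentence: when $\psi$ is constant the Hatcher handles are standard disk bundles and attaching them does not change the fiberwise diffeomorphism type of $M\times I$. You spell out the reduction from null-homotopic $\psi$ to constant $\psi$ via the stratified deformation $(\Sig\times I,H)$ (a step the paper leaves implicit, relying on Proposition~\ref{sd is a group}(a)), and you make explicit the invocations of Lemma~\ref{first trivial lemma} and Proposition~\ref{second AdT cancellation lemma}, but the argument is the same.
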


\begin{proof}
If $\psi$ is constant then the positive and negative Hatcher handles in the Arc de Triomphe construction are standard disk bundles and attaching these to the top of $M\times I$ will not change its fiber diffeomorphism type.
\end{proof}


\subsubsection{Homotopy calculation}\label{ss312}

To prove Theorem \ref{AdT lemma} we need calculations in the form of more commuting diagrams.

Let
\[
	D\normch:\sdgo{B}{\d_0}(M)\to H_{q-4\bullet}(M,\d_1M)
\]
be the mapping given by sending $(\Sig,\psi)$ to the image of the normalized Chern character of the bundle $\xi$ under the mapping
\[
	\normch(\xi)\in  H^{4\bullet}(\Sig)\cong H_{q-4\bullet}(\Sig,\d\Sig)\xrarrow{j_\ast}  H_{q-4\bullet}(M,\d_1M)
\]
induced by the inclusion $j:(\Sig,\d\Sig)\to (M,\d_1M)$. Since $\xi$ is an oriented bundle, the Framing Principle applies to prove the following.

\begin{lem}\label{IK torsion of AdT3}
The following diagram commutes if $n+m$ is odd.
\[
\xymatrix{
\ovsdgo{B}{\d_0}(M)\ar[rr]_{\ov{AdT}}
\ar@/_2pc/[rrr]_{(-1)^n2D\normch}
&&  
\pi_0\std{B}{\d_0}(M)
\ar@/_2pc/[rr]_{D\circ\t^\IK}
&
 H_{q-4\bullet}(M,M_{\d_1B})\ar[r]_{p_\ast} 
 &
 H_{q-4\bullet}(B,\d_1B)
	}
\]
\end{lem}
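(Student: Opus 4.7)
Plan: The lemma asserts the commutativity of a diagram whose right-hand portion (from $\pi_0\std{B}{\d_0}(M)$ onward) is precisely the content of Theorem \ref{second main theorem}. The new content, therefore, is the triangle equality
\[
\Theta\bigl(\ov{AdT}(\Sig,\psi)\bigr) \;=\; (-1)^n\,2\,D\normch(\Sig,\psi) \;\in\; H_{q-4\bullet}(M,M_{\d_1 B}),
\]
with the outer diagram then commuting upon composition with $p_\ast$. My strategy is to reduce this to the formula for $\Theta$ of an immersed Hatcher construction, already established in Theorem \ref{first main theorem}, via the stratified deformation lemma.

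First I would invoke the stratified deformation lemma (Lemma \ref{stratified deformation lemma}) to deform the $h$-cobordism bundle $W(\Sig,\psi)$ produced by the AdT construction into an immersed Hatcher bundle $E_+^{n,m}(M,\tilde\jmath,\xi)$ whose parameter space is, up to stratified deformation, $\Sig$ itself, equipped with the natural inclusion $\tilde\jmath:\Sig\hookrightarrow M$ lifting $\pi:\Sig\to B$. The geometric input is that the cancellation of positive and negative Hatcher handles along $\Sig_0$, together with the fold structure of $\pi$ there, reorganizes into a single immersed positive Hatcher handle parametrized by $\Sig$; the cancellation at the fold set becomes precisely the framed birth-death locus required to apply the Framing Principle.

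Next I would apply Theorem \ref{first main theorem} to this immersed Hatcher bundle, which yields
\[
\Theta\bigl(E_+^{n,m}(M,\tilde\jmath,\xi)\bigr) \;=\; (-1)^n\,\tilde\jmath_{\ast}\,D\bigl(2\,\normch(\xi)\bigr) \;=\; (-1)^n\,2\,D\normch(\Sig,\psi),
\]
the last equality being the definition of $D\normch$ on stratified subsets as the push-forward of the Poincar\'e dual of the normalized Chern character under the inclusion $\tilde\jmath:\Sig\to M$. Composing with $p_\ast$ and invoking Theorem \ref{second main theorem} ($p_\ast\circ\Theta = D\circ\tau^\IK$) then gives the commutativity of the outer curved arrow $D\circ\tau^\IK\circ\ov{AdT}$, completing the lemma.

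The main obstacle is the stratified deformation lemma itself, whose proof requires a careful analysis of the handle cancellation along the fold set: one must exhibit a fiberwise isotopy merging the positive and negative Hatcher handles into a single immersed positive Hatcher handle, and verify that the framing data on the resulting birth-death locus matches the hypotheses of the Framing Principle (Theorem \ref{relative framing principle}). A second subtlety is the sign calculation: one might naively expect $(-1)^n$ from $\Sig_+$ and $(-1)^{n+1}$ from $\Sig_-$ to cancel, but the orientation convention on $\Sig_-$ (opposite to the one pulled back from $B$, as in Definition \ref{stratified set}) flips the sign of the negative-handle contribution, so that $\Sig_+$ and $\Sig_-$ reinforce each other to give the coefficient $(-1)^n\cdot 2$.
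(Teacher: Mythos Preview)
Your overall architecture—factor through $\Theta$ and then invoke $p_\ast\circ\Theta=D\circ\tau^{\IK}$—is close in spirit to the paper's, but there are two genuine problems and one difference in route worth noting.

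\textbf{The parameter space for the immersed Hatcher is not $\Sig$.} The map $\pi:\Sig\to B$ has fold singularities along $\Sig_0$, so it is not an immersion and cannot serve as the $\lambda$ in $E_+^{n,m}(M,\tilde\lambda,\xi)$. What Lemma~\ref{stratified deformation lemma} actually provides is a deformation of the stratified subset $(\Sig,\psi)$ (not of the $h$-cobordism bundle $W(\Sig,\psi)$) after which the components of $\Sig_-$ sit inside contractible pieces of $\Sig$; one then homotopes $\psi$ to be constant on $\Sig_-$, so the negative Hatcher handles become trivial disk bundles and can be removed. The remaining immersed Hatcher construction is parametrized by $(L,\partial_0 L)=(\Sig_+,\Sig_0)$ with $\lambda=\pi|_{\Sig_+}$, which \emph{is} an immersion. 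This is exactly the content of Lemma~\ref{representation by immersed Hatcher} and its map $\Sig_{\tilde\lambda}$.

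\textbf{The factor $2$ does not arise from $\Sig_+$ and $\Sig_-$ reinforcing.} After the deformation just described, $\Sig_-$ carries the trivial coefficient map and contributes nothing. The doubling comes from the passage from the $h$-cobordism $E_+^{n,m}$ to its \emph{top} $M'$: by Corollary~\ref{cor: torsion of twisted Hatcher sphere bundle} (equivalently, equations (1)--(4) in the proof of Theorem~\ref{first main theorem}), one has $\tau^{\IK}(E')=(-1)^n\,2\,\normch(\xi)$ for the boundary disk bundle $E'$, the extra factor of $2$ being the usual doubling that relates the torsion of a disk bundle to that of its boundary sphere bundle when the fiber dimension is odd. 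Your orientation-flip explanation is a red herring.

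\textbf{Comparison with the paper's route.} The paper does not invoke Theorem~\ref{second main theorem} here; in the paper's logical flow that theorem is downstream of Proposition~\ref{main proposition}, which is itself established by the AdT machinery surrounding this lemma. Instead the paper lifts each element of $\ovsdgo{B}{\d_0}(M)$ to some $G(L,\partial_0 L)$ via the surjectivity in Lemma~\ref{representation by immersed Hatcher}, and then chases the large diagram in the proof of Theorem~\ref{first main theorem}, which already contains the arrow $D\circ\tau^{\IK}$ directly (computed via Theorem~\ref{torsion of immersed Hatcher} and the Framing Principle). This avoids any appearance of circularity and does not require the general identity $p_\ast\circ\Theta=D\circ\tau^{\IK}$, only its validity on immersed Hatcher handles, where it is known by construction.
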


Although we claim that the Framing Principle implies this lemma, we don't need to verify it since this lemma follows from the next lemma.

\begin{lem}\label{representation by immersed Hatcher}
Every element of $\ovsdgo{B}{\d_0}(M)$ is in the image of a homomorphism
\[
	\Sig_{\tilde\ll}:G(L,\d_0L)\to \ovsdgo{B}{\d_0}(M)
\]
where $\ll:(L,\d_1L)\to (B,\d_1B)$ is a codimension $0$ immersion covered by an embedding $\tilde\ll:L\to M$ which makes the following diagram commute.
\[
\xymatrix{
G(L,\d_0L)\ar[d]_{\Sig_{\tilde\ll}}
\ar[drr]^{top\,E_+^n(M,\tilde\ll,-)}
\ar[rrr]^{D\circ(-1)^n2\normch} 
&&  
&
 H_{q-4\bullet}(L,\d_1 L)
 \ar[d]^{\Sig_{\tilde\ll}}
 \\
\ovsdgo{B}{\d_0}(M)\ar[rr]_{\ov{AdT}}
\ar@/_2pc/[rrr]_{(-1)^n2D\normch}
&&  
\pi_0\std{B}{\d_0}(M)
&
 H_{q-4\bullet}(M,M_{\d_1B})
 &
	}
\]
\end{lem}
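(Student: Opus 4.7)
My plan is to define the homomorphism $\Sig_{\tilde\ll}$ explicitly, verify that the diagram commutes, and then establish surjectivity by deforming an arbitrary stratified subset into the form $(\tilde\ll(L),\xi)$.

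For the construction, given $\xi\in G(L,\d_0L)$ I set $\Sig_{\tilde\ll}(\xi)=(\tilde\ll(L),\,\xi\circ\tilde\ll^{-1})$, where $\tilde\ll(L)$ is oriented by $L$ and stratified by $\pi|_{\tilde\ll(L)}=\ll$. Since $\ll$ is a codimension-$0$ immersion, there are no fold singularities: $\Sig_0=\emptyset$, and $\tilde\ll(L)$ splits as $\Sig_+\cup\Sig_-$ according to where $\ll$ preserves or reverses orientation. Additivity of $\Sig_{\tilde\ll}$ on $G(L,\d_0L)$ follows from the $H$-space structure of $G/O$ combined with the disjoint-union construction of Proposition \ref{sd is a group}, using Remark \ref{rem: vertical vector field} to separate two parallel copies of $\tilde\ll(L)$ in the vertical direction inside $M$.

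For the triangle $\ov{AdT}\circ\Sig_{\tilde\ll}=top\,E_+^n(M,\tilde\ll,-)$ I compare the two constructions directly. Without loss of generality $\ll$ is orientation-preserving (otherwise I reverse the orientation of each component of $L$ on which $\ll$ is orientation-reversing and correspondingly invert $\xi$ there via the $H$-space structure of $G/O$). Then $\tilde\ll(L)=\Sig_+$, so the AdT applied to $(\tilde\ll(L),\xi)$ attaches only positive Hatcher handles along $\tilde\ll(L)$ with coefficients from $\xi$, which is exactly $top\,E_+^n(M,\tilde\ll,\xi)$. The outer curved triangles of the diagram commute by Lemma \ref{IK torsion of AdT3} and Theorem \ref{torsion of immersed Hatcher} respectively, and the central triangle by Theorem \ref{first main theorem}.

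The surjectivity claim is the crux: given $(\Sig,\psi)\in\ovsdgo{B}{\d_0}(M)$, I must find $(L,\ll,\tilde\ll,\xi)$ with $\Sig_{\tilde\ll}(\xi)=(\Sig,\psi)$. When $\Sig_0=\emptyset$ this is immediate: take $L=\Sig$ with $\tilde\ll$ the inclusion, $\ll=\pi|_\Sig$, and $\xi=\psi$. For general $(\Sig,\psi)$, my plan is to deform it within $\ovsdgo{B}{\d_0}(M)$ to an equivalent subset with empty fold set by resolving each component of $\Sig_0$, using the reverse of the mushroom move of Example \ref{eg:mushroom} and adjoining null stratified subsets from the proof of Proposition \ref{sd is a group} to preserve the class. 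Each local resolution replaces a neighborhood of $\Sig_0$ in $\Sig$ with two immersed sheets whose new boundary is pushed into $\d_1B$, giving admissible stratified data. The hard part will be carrying out this fold-resolution coherently along all of $\Sig_0$, particularly near where $\Sig_0$ meets $\d\Sig$, and verifying that the extended coefficient map $\psi'$ agrees with $\psi$ up to null-homotopy in the quotient group. Once this is achieved, $(\Sig',\psi')$ is represented by the inclusion of a disjoint union of immersed codimension-$0$ submanifolds, giving the required element of the image of $\Sig_{\tilde\ll}$.
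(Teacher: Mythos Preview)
Your definition of $\Sig_{\tilde\ll}(\xi)=(\tilde\ll(L),\xi\circ\tilde\ll^{-1})$ does not produce a stratified subset in the sense of Definition~\ref{stratified set}: that definition requires $\pi(\d\Sig)\subseteq\d B$, but $\d_0L\subseteq\d L$ is sent by $\ll$ into the interior of $B$ (recall $\ll^{-1}(\d_1B)=\d_1L$ and everything avoids $\d_0B$). The paper's construction avoids this by taking $\Sig$ to be \emph{two} copies of $L$ glued along $\d_0L$, so that $\Sig_+\cong\Sig_-\cong L$ with $\Sig_0=\d_0L$, and setting $\psi=\xi$ on $\Sig_+$ and $\psi$ \emph{trivial} on $\Sig_-$. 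The fold set is deliberately nonempty; what makes the triangle commute is that the negative Hatcher handles over $\Sig_-$ are standard disk bundles (because $\psi|_{\Sig_-}$ is trivial) and can be removed, leaving exactly $top\,E_+^n(M,\tilde\ll,\xi)$. Your orientation-reversal trick is not needed and does not repair the boundary issue.

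This also changes the surjectivity argument. You propose to resolve all folds so that $\Sig_0=\emptyset$; the paper does not attempt this and instead invokes the Stratified Deformation Lemma~\ref{stratified deformation lemma} to deform any $(\Sig,\psi)$ so that each component of $\Sig_-$ lies in a contractible subset of $\Sig$, after which $\psi$ can be homotoped to be constant on $\Sig_-$ and on $\Sig_0$. One then takes $(L,\d_0L)=(\Sig_+,\Sig_0)$ with $\tilde\ll$ the inclusion, and checks via Remark~\ref{formula for inverse of stratified set} that $\Sig_{\tilde\ll}(\xi_+)-(\Sig,\psi)$ has trivial coefficient map, hence vanishes in $\ovsdgo{B}{\d_0}(M)$. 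Your fold-resolution program is a genuinely different and harder route, and you have not shown it can be carried out. Finally, do not cite Lemma~\ref{IK torsion of AdT3} here: in the paper that lemma is \emph{deduced from} the present one, so invoking it is circular. The commutativity of the curved square is just the naturality of $D\normch$ under the embedding $\tilde\ll$, which follows directly from the definition.
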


\begin{proof}[Proof of Lemma \ref {IK torsion of AdT3}]
First we note that both maps coming out of $\sdgo{B}{\d_0}(M)$ factor through $\ovsdgo{B}{\d_0}(M)$. Each element then lifts to $G(L,\d_0L)$. Next we chase the diagram at the beginning of the proof of Theorem \ref{first main theorem} to show that the two images of this element in $\bigoplus H_{q-4k}(B,\d_1)$ are equal. The diagram in Lemma \ref{representation by immersed Hatcher}  above shows that the two images obtained are the same as the two images in the diagram of Lemma \ref {IK torsion of AdT3} which we are proving.
\end{proof}

\begin{proof}[Proof of Lemma \ref{representation by immersed Hatcher}]
The mapping $\Sig_{\tilde\ll}$ takes a map $\xi:L\to G/O$ which is trivial over $\d_0L$ and produces a stratified subset 
\[
	\Sig_{\tilde\ll}(\xi)=(\Sig,\psi)
\]
where $\Sig$ is two copies of $L$, thus $\Sig_-\cong\Sig_+\cong L$, glued together along $\d_0L$ and embedded in $M$ using two small perturbations of the embedding $\tilde\ll:L\to M$. The mapping psi is equal to $\xi$ on $\Sig_+$ and is trivial on $\Sig_-$. Since $\psi$ is trivial on $\Sig_-$, the negative Hatcher handles in $W(\Sig,\psi)$ are standard disk bundles. So, the bundle $AdT(\Sig,\psi)$ will not change if we remove these ``trivial'' Hatcher handles. The result is then equivalent to the immersed Hatcher handle. This shows that the triangle in the diagram commutes. Commutativity of the (curved) square follows from the definition of $D\normch(\xi)$ on $\sdgo{B}{\d_0B}$, namely, $D\normch\Sig_{\tilde\ll}(\xi)$ is the push-forward along the embedding $D(\tilde\ll):E\to M$ of the Poincar\'e dual of the normalized Chern character of $\xi$ as a bundle over $L$.

It remains to prove the element-wise surjectivity statement. This follows from the stratified deformation lemma \ref{stratified deformation lemma} whose proof we leave until the end. This lemma shows that any stratified subset $(\Sig,\psi)$ of $M$ can be deformed so that every component of $\Sig_-$ is contained in a disjoint contractible subset of $\Sig$. Then we can deform $\psi$ so that it is constant on each component of $\Sig_-$ and therefore also on $\Sig_0$. Then let $(L,\d_0L)=(\Sig_+,\Sig_0)$ and let $\ll:L\to B$ be the map $\pi_+:\Sig_+\to B$. Let $\tilde\ll:L\to M$ be the inclusion map of $\Sig_+$. Then we claim that the image of $(\Sig,\psi)$ in $\ovsdgo{B}{\d_0}(M)$ is equal to the image $\Sig_{\tilde\ll}(\xi_+)$ of $\xi_+=\xi|\Sig_+\in G(L,\d_0L)$. Since we started with an arbitrary element of $\sdgo{B}{\d_0}(M)$ this will prove the lemma.

To see that $(\Sig,\psi)$ and $\Sig_{\tilde\ll}(\xi_+)$ are equal in $\ovsdgo{B}{\d_0}(M)$, we just take the difference $\Sig_{\tilde\ll}(\xi_+)-(\Sig,\psi)$. The negative of $(\Sig,\psi)$ given in Remark \ref{formula for inverse of stratified set} has the form $(\Sig',\psi')$ where $\psi'=\psi\circ\r:\Sig'\to\Sig\to G/O$. But then $\psi'$ is trivial on $\r^{-1}(\Sig\delete U_-)$ and $U_-\subset \Sig'_-\cong \Sig_+$ has the same $G/O$ coefficient map as $\Sig_{\tilde\ll}(\xi_+)$ has on its positive part. Therefore, the subset $U_-$ of the negative part of $\Sig'$ cancels the interior of the positive part of $\Sig_{\tilde\ll}(\xi_+)$ by a stratified deformation. The result has trivial coefficient map to $G/O$ and therefore is trivial in $\ovsdgo{B}{\d_0}(M)$ as claimed.
\end{proof}


\subsubsection{Proof of the AdT Theorem}

The Arc de Triomphe Theorem \ref{AdT lemma} will follow from the following first version of the theorem.

\begin{lem}\label{first version of AdT Lemma}
The mapping
\[
	D\normch:\sdgo{B}{\d_0}(M)\to
	H_{q-4\bullet}(M,\d_1M)
\]
is rationally surjective in the sense that its image generates $H_{q-4\bullet}(M,\d_1M)$ as a vector space over $\RR$.
\end{lem}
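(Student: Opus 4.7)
The strategy, as previewed in the outline, is to realize every class in $H_{q-4\bullet}(M,\d_1 M;\RR)$ as a push-forward of a normalized Chern character coming from the singular set of a fiberwise generalized Morse function on $M$, and then rationally lift the coefficient map from $BO$ to $G/O$.

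First, I would observe that any fiberwise GMF $f\colon M\to\RR$ (with the appropriate trivialization along $\d_0 B$) produces a stratified subset in the sense of Definition \ref{stratified set}: the fiberwise derivative is transverse to the zero section of $T\vv M$, so $\Sig(f)\subset M$ is a smooth oriented $q$-submanifold whose projection to $B$ has only fold singularities at the birth-death locus, with $\Sig_+$ (resp.\ $\Sig_-$) the closure of Morse critical points of even (resp.\ odd) index, and with $\Sig_0$ the birth-death set. The negative eigenspace of $D^2 f$ assembles to a stable vector bundle $\xi(f)\to \Sig(f)$ with classifying map $\psi_f\colon\Sig(f)\to BO$. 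This yields an element $(\Sig(f),\psi_f)$ of the $BO$-coefficient analogue $\sd^{BO}_{B,\d_0}(M)$, whose image under $D\normch$ (defined exactly as in the $G/O$ case) is the push-forward of $\normch(\xi(f))$ to $H_{q-4\bullet}(M,\d_1 M)$.

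Second, I would invoke the main theorem of \cite{I:GMF}, which identifies the space of GMFs on a single closed manifold $X$ with $Q(BO\wedge X_+)$ in the $\dim X$-range. In the family setting this means the space of fiberwise GMFs on $M\to B$ is, in the relevant stable range, the section space of a bundle over $B$ with fiber $Q(BO\wedge X_+)$. A standard obstruction-theoretic/Federer-type argument, packaged in Corollary 2.2.2 of \cite{Second}, then computes the rational components of this section space and shows that a fiberwise GMF $f$ is detected rationally by the push-forward $p^\Sig_\ast\bigl(\mathrm{ch}(\xi(f)\otimes\CC)\bigr)\in H^{4\bullet}(B,\d_0B;\QQ)$, or equivalently by the Poincar\'e dual class $j_\ast\bigl(\mathrm{ch}(\xi(f)\otimes\CC)\bigr)\in H_{q-4\bullet}(M,\d_1 M;\QQ)$, where $j\colon(\Sig(f),\d\Sig(f))\into(M,\d_1M)$ is the inclusion. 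Moreover, every class in $H_{q-4\bullet}(M,\d_1 M;\QQ)$ is realized (up to a nonzero rational scalar) in this way, as the fiber dimension $N\ge 2q+3$ guarantees that the relevant stable range is in effect. Since in each degree $4k$ the normalized Chern character $\normch_{4k}$ differs from $\mathrm{ch}_{4k}(\xi\otimes\CC)$ by the nonzero scalar $(-1)^k\z(2k+1)/2$, the real span of $\{D\normch(\Sig(f),\psi_f)\}$ exhausts $H_{q-4\bullet}(M,\d_1 M;\RR)$.

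Third, I would lift coefficients from $BO$ to $G/O$. The fiber sequence $G/O\to BO\to BG$ together with the classical fact that $\pi_\ast BG\cong \pi_{\ast-1}^s$ is finite in all positive degrees shows that $G/O\to BO$ is a rational equivalence, so $[\Sig,G/O]\otimes\QQ\to [\Sig,BO]\otimes\QQ$ is an isomorphism. Hence for each $(\Sig(f),\psi_f)$ there is a positive integer $N$ such that $N\cdot\psi_f$ (realized geometrically by replacing $\xi(f)$ with its $N$-fold Whitney sum) lifts to a map $\tilde\psi_f\colon\Sig(f)\to G/O$; the resulting element of $\sdgo{B}{\d_0}(M)$ has $D\normch$ image equal to $N$ times that of the original. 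Thus the $G/O$-coefficient representatives still span $H_{q-4\bullet}(M,\d_1 M;\RR)$, proving the lemma. The principal technical obstacle is the second step: converting the $\dim X$-range identification of \cite{I:GMF} into a rational computation of the space of fiberwise GMFs and verifying, with due care for boundary conditions along $\d_0B$ and the fold-singularity requirement on $\Sig\to B$, that the Chern character push-forward hits a spanning set. This is the content absorbed into Corollary 2.2.2 of \cite{Second}, on which the present argument crucially relies.
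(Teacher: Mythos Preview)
Your proposal is correct and follows essentially the same three-step strategy as the paper: realize stratified subsets via singular sets of fiberwise GMFs with $BO$ coefficients, invoke the homotopy-theoretic computation from \cite{I:GMF} together with Corollary~2.2.2 of \cite{Second} to see that the resulting Chern characters span, and then lift coefficients rationally from $BO$ to $G/O$.

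One technical refinement in the paper's version is worth noting. Rather than working with GMFs $f\colon M\to\RR$ directly, the paper passes to $M\times I\to B$ and considers GMFs $f\colon M\times I\to I$ equal to the projection near $\d_0M\times I\cup M\times\{0,1\}$. The point is that the ambient space of all smooth functions with this boundary condition is contractible and contains a function with no critical points, so the framed singular set $\Sig(f)$ is automatically framed null-cobordant; this identifies the space of fiberwise GMFs, in the relevant range, with the \emph{fiber} of $\gamma\colon\Gamma_{B,\d_0}Q_B(BO\times M)\to\Gamma_{B,\d_0}Q_B(M)$, whose $\pi_0$ then splits off the desired $H_{q-4\bullet}(M,M_{\d_1B};\QQ)$. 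Your formulation on $M$ itself is morally the same (and the target homology groups agree via $M\simeq M\times I$), but the $M\times I$ setup makes the framed-cobordism bookkeeping and the appeal to \cite{I:GMF} cleaner. Your lifting step via finiteness of $\pi_\ast BG$ and the paper's via finiteness of $J(\Sig)$ are equivalent formulations of the same fact.
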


\begin{proof} We review the properties of generalized Morse functions (GMF) as described in \cite{I:GMF}, namely the singular set of a fiberwise GMF is a stratified set $\Sig$ together with a coefficient mapping $\Sig\to BO$. See also \cite{Goette08} for the relationship between generalized Morse functions and analytic torsion.

{Consider the bundle $M\times I\to B$ and consider an arbitrary fiberwise generalized Morse function} $f:M\times I\to I$ which agrees with the projection map over $\d_0B$ and in a neighborhood of the vertical boundary. Thus $f=pr_I$ on the set
\[
 A=\d_0M\times I\cup M\times \{0,1\}
\]
The fact that $f$ is a fiberwise GMF is equivalent to the property that the $f$ is in general position, so that its singular set $\Sig(f)$ is a submanifold of $M\times I$, and so that the projection map $\Sig(f)\to B$ has only fold singularities and the Morse point set which are the regular points of the projection $\Sig(f)\to B$ are stratified by index $i$. We will use just the sign $(-1)^i$ making $\Sig_+$ into the set of Morse points of even index and $\Sig_-$ the set of odd index Morse points of $f$. It is important to note that $\Sig(f)$ is a manifold with boundary and $\d\Sig(f)=\Sig(f)\cap M_{\d_1B}\times I$.

The singular set is the inverse image of zero under the vertical derivative $D\vv(f)$ of $f$ and therefore a framed manifold with boundary. (Add the vertical normal bundle to see the framing.) Since the space of all smooth functions on $M\times I$ equal to $pr_I$ on $A$ is contractible and contains a function without critical points, this framed manifold is framed null cobordant and represents the trivial element of the fiberwise framed cobordism group of $M$ relative to $M_{\d_1B}$ which is $\pi_0\Gamsub{B}{\d_0} Q_B(M)$ where $Q_B(M)$ is the bundle over $B$ with fiber $Q(X_+)=\Omega^\infty\Sig^\infty(X_+)$ over $b\in B$ if $X$ is the fiber of $M\times I$ over $b$.

The negative eigenspace of $D^2(f)$ gives a stable vector bundle $\xi$ over $\Sig(f)$. So $\Sig(f)$, together with $\xi$ gives a stratified subset of $M\times I$ with coefficients in $BO=\colim BO(k)$. Since $\Sig(f)$ is a framed manifold with boundary which is framed null cobordant when we ignore this vector bundle, we get an element of the kernel of the map from the fiberwise framed cobordism group of $BO\times M$ to that of $M$. This kernel is $\pi_0$ of the fiber of the map:
\[
	\g:\Gamsub{B}{\d_0}Q_B(BO\times M)\to \Gamsub{B}{\d_0}Q_B(M)
\]
In \cite{I:GMF}, it is shown that the space of generalized Morse functions on a manifold $X$ is $\dim X$-equivalent to $Q(BO\wedge X_+)$. If we apply that theorem fiberwise, we get that the space of fiberwise generalized Morse functions on $M\times I$ has the $n+m-q$ homotopy type of the fiber of the map $\g$ above.

However, it is a standard homotopy argument to show that there is a split surjection
\[
	Q(BO\wedge X_+)\to \Omega^\infty(BO\wedge X_+)
\]
which is rationally equivalent to the homology of $X$ in every 4th degree since $BO$ is rationally equivalent to $\prod_{k>0}K(\ZZ,4k)$. Therefore, $\pi_0(fiber(\g))$ has a split summand which is rationally isomorphic to the group:
\[
	H:=H_{q-4\bullet}(M,M_{\d_1B};\QQ)
\]
by the basic homotopy calculation (Corollary 2.2.2 of \cite{Second}).

This implies that a set of generators for the vector space $H\otimes \RR$ is given by taking $D\normch(\Sig,\xi)$ for all possible stratified sets $(\Sig,\xi)\in \sd^{BO}_{B,\d_0}(M\times I)$ given by all fiberwise generalized Morse functions on $M\times I$ fixing the subspace $A$. Using the fact that the group $J(\Sig)$ is finite with order, say $m$, we know that $J(\xi^m)=0$ in $J(\Sig)$ and therefore lifts to a map $\Sig\to G/O$. So, these various stratified sets $(\Sig,\xi^m)\in \sdgo{B}{\d_0}(M\times I)$ will have $D\normch(\Sig,\xi^m)$ generating the vector space $H\otimes\RR$ as claimed.
\end{proof}

\begin{lem}\label{Th AdT=2ch}
The following diagram commutes 
\[
\xymatrix{
\sdgo{B}{\d_0}(M)\ar[rr]_{{AdT}}
\ar@/_2pc/[rrr]_{(-1)^n2D\normch}
&&  
\pi_0\std{B}{\d_0}(M)\ar[r]_(.4){\Theta}
&
 H_{q-4\bullet}(M,M_{\d_1B})
 } 
\]where $\Theta:M'\mapsto \Theta(M',M)$ gives the rational exotic structure class of $M'$.
\end{lem}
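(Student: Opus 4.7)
The plan is to reduce the statement to Lemma \ref{representation by immersed Hatcher} and Theorem \ref{first main theorem}, both already available in the excerpt. Before doing anything else I would observe that both compositions $\Theta\circ AdT$ and $(-1)^n 2D\normch$ factor through the quotient $\ovsdgo{B}{\d_0}(M)$: the first by Proposition \ref{prop: induced map on ovAdT}, and the second because the normalized Chern character of a bundle classified by a null-homotopic map $\Sigma\to G/O$ vanishes. So it suffices to show the induced diagram commutes on $\ovsdgo{B}{\d_0}(M)$.

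Next, given a class $[(\Sigma,\psi)]\in\ovsdgo{B}{\d_0}(M)$, I would invoke Lemma \ref{representation by immersed Hatcher} to choose a codimension-$0$ immersion $\lambda:(L,\d_1L)\to(B,\d_1B)$ covered by an embedding $\tilde\lambda:L\to M$, together with some $\xi\in G(L,\d_0L)$, realizing $[(\Sigma,\psi)]=\Sigma_{\tilde\lambda}(\xi)$. The middle triangle of that lemma identifies $\ov{AdT}\circ\Sigma_{\tilde\lambda}$ with $top\,E^n_+(M,\tilde\lambda,-)$, so applying $\Theta$ and then Theorem \ref{first main theorem} gives
\[
\Theta(AdT[(\Sigma,\psi)]) \;=\; \Theta\bigl(top\,E^n_+(M,\tilde\lambda,\xi)\bigr) \;=\; (-1)^n\,\tilde\lambda_\ast D\bigl(2\normch(\xi)\bigr).
\]
On the other side, the right-hand square of the same lemma (which is built into the definition of $\Sigma_{\tilde\lambda}$, since $\Sigma_+$ is carried to $L$ diffeomorphically while $\psi|_{\Sigma_-}$ is constant) yields
\[
(-1)^n 2D\normch[(\Sigma,\psi)] \;=\; \tilde\lambda_\ast\bigl((-1)^n 2D\normch(\xi)\bigr) \;=\; (-1)^n\,\tilde\lambda_\ast D\bigl(2\normch(\xi)\bigr).
\]
Comparing the two equalities yields $\Theta\circ AdT = (-1)^n 2D\normch$ on each class, proving the lemma.

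The main obstacle in this plan is not in the present chase, which is purely formal, but in the inputs. Theorem \ref{first main theorem} will already have been established via the Hatcher-handle torsion calculation (Theorem \ref{torsion of immersed Hatcher}) combined with the stratified deformation theorem from \cite{Second}. The deeper content lies in Lemma \ref{representation by immersed Hatcher}, whose element-wise surjectivity relies on the stratified deformation lemma (Lemma \ref{stratified deformation lemma}) that allows one to deform an arbitrary stratified subset until every component of $\Sigma_-$ sits inside a contractible piece of $\Sigma$, at which point $\psi|_{\Sigma_-}$ can be trivialized and the ``two parallel copies of $L$'' description becomes available. Once that lemma is granted, the proof of Lemma \ref{Th AdT=2ch} is the short diagram chase outlined above.
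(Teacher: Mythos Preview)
Your proposal is correct and follows essentially the same route as the paper: reduce to $\ovsdgo{B}{\d_0}(M)$, lift each class via the surjectivity part of Lemma \ref{representation by immersed Hatcher}, and then combine the commuting diagram there with Theorem \ref{first main theorem} to equate the two compositions. The paper phrases this as a single diagram chase (adding $\Theta$ to the diagram of Lemma \ref{representation by immersed Hatcher} and noting the outer square commutes by Theorem \ref{first main theorem}), while you write out the element-wise version, but the content is identical.
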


This lemma proves the Arc de Triomphe Theorem \ref{AdT lemma} since we just proved in Lemma \ref{first version of AdT Lemma} that the normalized Chern character is rationally surjective and we know by the smoothing theorem that $\Theta$ is a rational isomorphism.

\begin{proof} Take the diagram from Lemma \ref{representation by immersed Hatcher} and add the arrow $\Theta$:
\[
\xymatrix{
G(L,\d_0L)\ar[d]_{\Sig_{\tilde\ll}}
\ar[drr]^{top\,E_+^n(M,\tilde\ll,-)}
\ar[rrr]^{D\circ(-1)^n2\normch} 
&&  
&
 H_{q-4\bullet}(L,\d_1 L)
 \ar[d]^{\Sig_{\tilde\ll}}
 \\
\ovsdgo{B}{\d_0}(M)\ar[rr]_{\ov{AdT}}
\ar@/_2pc/[rrr]_{(-1)^n2D\normch}
&&  
\pi_0\std{B}{\d_0}(M)\ar[r]_(.4){\Theta}
&
 H_{q-4\bullet}(M,M_{\d_1B})
 &
	}
\]
The outside curved square commutes by Theorem \ref{first main theorem}. The map $\Sig_{\tilde\ll}$ can be chosen to hit any element of $\ovsdgo{B}{\d_0}(M)$ by the previous lemma. Therefore, the curved triangle at the bottom commutes. This implies the lemma since the maps factor uniquely through $\ovsdgo{B}{\d_0}(M)$.
\end{proof}



\subsection{Stratified deformation lemma}\label{subsecA32}

It remains to prove the following lemma which was used to show that each Arc de Triomphe construction can be deformed into an immersed Hatcher construction.

\begin{lem}[Stratified Deformation Lemma]\label{stratified deformation lemma}
If the fiber dimension of $M$ is $\ge q+2$, then any element of $\sdgo{B}{\d_0}(M)$ is represented by a stratified subset $(\Sig,\psi)$ of $M$ with the property that the components of $\Sig_-$ are contained in disjoint contractible subsets of $\Sig$.
\end{lem}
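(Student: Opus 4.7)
The plan is to use the mushroom deformation of Example \ref{eg:mushroom} as the fundamental local move, iteratively simplifying each component of $\Sig_-$ until it lies inside a small ball in $\Sig$. The fiber dimension hypothesis $N\ge q+2$ is precisely what makes these local models embeddable as stratified subsets of $M$.

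First I would invoke Remark \ref{rem: vertical vector field} to produce a nowhere-zero vertical vector field $v$ on $\Sig$ pointing from $\Sig_-$ to $\Sig_+$ along $\Sig_0$ and extended to all of $\Sig$. Because the fiber dimension exceeds $q+1$, this extension is unique up to homotopy. The field $v$ lets me embed local stratified models of dimension $\le q$ (the $k$-lens and the mushroom) as stratified subsets of $M$ near any prescribed point of $\Sig$, with the $v$-direction playing the role of the extra fiber coordinate appearing in Examples \ref{eg:k-lens} and \ref{eg:mushroom}.

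Next I would choose a Morse function $h:\Sig_-\to [0,\infty)$ relative to the boundary $\Sig_0\cap\Sig_-=h^{-1}(0)$, yielding a handle decomposition of $\Sig_-$. The obstruction to $\Sig_-$ being a disjoint union of $q$-disks is the presence of handles of index $\ge 1$. I would remove each such handle by a two-step local move: plant a mushroom in a small ball of $M$ adjacent to the offending handle (using $v$ as the vertical direction of Example \ref{eg:mushroom}), then cancel the freshly produced $\Sig_-$-half of the new $k$-lens against the handle by a Morse-theoretic pairwise cancellation along the handle's attaching sphere. The combined move is a stratified deformation of $(\Sig,\psi)$ in $M\times I$ over $B\times I$; its net effect is to absorb the handle into $\Sig_+$ while leaving behind only the remaining $k$-lens, which sits inside the small ball where the mushroom was planted.

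Iterating over all higher-index handles of all components of $\Sig_-$ produces a new representative $(\Sig',\psi')$ in which each component of $\Sig_-'$ is a $q$-disk realized as the lower half of some $k$-lens contained in a contractible ball in $\Sig'$. By transversality these mushroom balls can be arranged to be pairwise disjoint, using that the fiber dimension exceeds $q+1$, so their images in $\Sig'$ give the required disjoint contractible subsets. The principal technical obstacle is verifying that each local move really is a stratified deformation, i.e.\ that the projection $S\to B\times I$ has only fold singularities throughout: the mushroom planting together with the subsequent handle cancellation passes through a controlled birth-death of fold lines, and the condition $N\ge q+2$ is what permits this family to be embedded transversally in $M\times I$ so that no other singularities of the projection to $B\times I$ are created.
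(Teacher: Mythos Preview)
Your overall strategy---use the vertical vector field of Remark~\ref{rem: vertical vector field} to embed local mushroom models and iterate to simplify $\Sigma_-$---matches the paper's. The decomposition differs: the paper triangulates $\Sigma_-$ and removes a tubular neighborhood of each simplex by increasing dimension $m=0,1,\dots$, whereas you propose a relative handle decomposition. At stage $m$ the paper plants the product of $S^{m-1}$ with a $(q{-}m{+}1)$-dimensional mushroom over the remaining $m$-disk of a simplex, then performs the explicit move of Figure~\ref{fig: deleting an edge}: two nearby mushrooms merge, producing an $m$-lens which is then cancelled because its top and bottom carry matching coefficient maps. Each step is visibly a stratified deformation because it is a one-parameter family assembled from the explicit models of Examples~\ref{eg:k-lens} and~\ref{eg:mushroom}.

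The gap in your plan is the sentence ``cancel the freshly produced $\Sigma_-$-half of the new $k$-lens against the handle by a Morse-theoretic pairwise cancellation along the handle's attaching sphere.'' This is where all the content lies, and you have only named it. ``Morse-theoretic pairwise cancellation'' does not by itself describe a one-parameter family with only fold singularities in the projection to $B\times I$; the paper's contribution is precisely to supply such families. Your stated endpoint is also off: after the paper's procedure the new $\Sigma_-$ components are not $q$-disks (at stage $m$ the new component is diffeomorphic to $S^{m-1}\times I\times S^{q-m}$); it is only their union with the adjacent $\Sigma_+$ caps $T_+$ that is contractible, which is exactly what the lemma requires but weaker than what you claim.
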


\begin{proof} This is the same proof which appears in \cite{I:FF} on page 446-447 with five figures and in \cite{I:ComplexTorsion} on page 73 with one figure. We repeat the argument and pictures here since the statements are not the same, only analogous.

To clarify the statement of this lemma we point out that the mushroom ($t=0$ in Example \ref{eg:mushroom}) is already in the desired form since $\Sig_-\cong S^{k-1}\times I$ is contained in the contractible subset $\Sig_-\cup T_+\cong D^k$ of $\Sig$. Thus the contractible set can contain parts of $\Sig_+$.

The dimension hypothesis implies that all deformations of $\Sig$ in $M$ can be made into isotopies of smooth embeddings over $B$ by transversality. So, we will not concern ourselves with that point. Also, by Remark \ref{rem: vertical vector field}, there is a nowhere zero vertical vector field along any stratified subset $\Sig\subseteq M$ which points from $\Sig_-$ to $\Sig_+$ along $\Sig_0$. As in the proof of Proposition \ref{sd is a group} we can use this to find a ribbon $R_-\cong \Sig_-\times I$ containing $\Sig_-$ in $M$.

Suppose that $\d_1B$ is empty. {Then we will deform any $(\Sig,\psi)$ into the desired shape} (so that the union of $\Sig_-$ and a portion of $\Sig_+$ is a contractible subset of $\Sig$.) When $\d_1B$ is nonempty, we double $B$ along $\d_1B$ and double $M$ along $M_{\d_1B}$ and similarly for $(\Sig,\psi)$. Then do the deformation $\ZZ/2$ equivariantly. The fixed point sets of the $\ZZ/2$ action on the new $B$ and new $M$ are the original $\d_1B$ and $M_{\d_1B}$.

First choose an equivariant triangulation of $\Sig_-$ so that the fixed point set is a subcomplex and so that each simplex maps monomorphically into $B$. Then we will cut apart the set $\Sig_-$ by deleting a tubular neighborhood of each interior simplex $\Delta^m$ starting with the lowest dimension $m=0$. Let $w$ be a vertex in the interior of $\Sig_-$. Near $w$ we embed the ribbon $\Sig_-\times I$ and we will perform the deformation completely inside of this ribbon. 

($m=0$) The desired stratified deformation is given in Example \ref{eg:mushroom} but with the labels $\Sig_-,\Sig_+$ reversed and with $k=q$. In words, we create a $q$-lens above the point $w$ (above means in the direction of the vector field $v$ of Remark \ref{rem: vertical vector field}) together with the coefficient map sending the entire $q$-lens to $\psi(w)\in G/O$. This is the $t=\frac12$ part of Example \ref{eg:mushroom}. Then we attach the mushroom and cancel the portion of $\Sig_-$ around $w$ as in the $t=0$ picture of the example. Remembering that we have reversed $\Sig_-,\Sig_+$ we see that the new $\Sig_-$ is the disjoint union of $D^q$ (the ``top'' of the mushroom) and the old $\Sig_-$ with a disk shaped hole cut out around $w$. In other words, we have \emph{removed} a neighborhood of $w$ from the old $\Sig_-$.

($m=1$) Next, take a 1-simplex in $\Sig_-$. Since we have attached mushrooms on the two endpoints, the picture of this 1-simplex is as follows. Since $\Sig$ is $q$-dimensional, we have the product with $D^{q-1}$ in a small neighborhood of the 1-simplex where the endpoints stay inside the stems of the mushrooms planted on the endpoints.

\begin{figure}[htbp]
\begin{center}
%
{
\setlength{\unitlength}{.7in}
{\mbox{
\begin{picture}(5,1)
\put(0,0){ 
\thinlines
      \qbezier(1,.75)(1.5,.75)(1.5,.65)
      \qbezier(1.35,.5)(1.5,.5)(1.5,.65)
      \qbezier(1.35,.5)(1.2,.5)(1.2,.4)      
    \thicklines
      \qbezier(1.4,.3)(1.8,.25)(2,.28)
      \qbezier(1.4,.3)(1.2,.3)(1.2,.4)      
      \put(1.7,.35){$\Sigma_-$}
     %
   } 
    \thicklines
   \put(2,.27){\line(1,0){1}}
   \put(3,0){ 
      \thinlines
      \put(2,0){
      \qbezier(-1,.75)(-1.5,.75)(-1.5,.65)
      \qbezier(-1.35,.5)(-1.5,.5)(-1.5,.65)
      \qbezier(-1.35,.5)(-1.2,.5)(-1.2,.4)      
     \thicklines
       \qbezier(-1.4,.3)(-1.8,.25)(-2,.28)
     \qbezier(-1.4,.3)(-1.2,.3)(-1.2,.4)      
      }
      \put(1.5,.45){$\times\ D^{q-1}$}
     %
   } 
\end{picture}}
}}
\end{center}
\end{figure}

We focus attention to a small neighborhood of the 1-simplex in $\Sig_-$ (ignoring the $\Sig_-$ tops of the old mushrooms). Then, on the two $\Sig_+$ segments ($\times D^{q-1}$) we plant two new mushrooms (with $T_+\subseteq \Sig_+$ tops) and perform the deformation in Figure \ref{fig: deleting an edge}. When one of the endpoints of the 1-simplex lies on the boundary of the original set $\Sig_-$, we will not have a mushroom and the figure above is not quite accurate in that case. However, we will still have a $\Sig_+$ segment which meets the boundary of $\Sig_-$ along $\Sig_0$ and we can still perform this deformation (plant two mushrooms) and Figure \ref{fig: deleting an edge} will be accurate in this case.

When we plant the two new mushrooms, a new $\Sig_-$ component $S$ diffeomorphic to $S^{q-1}\times I\times S^0$ is created. This is contained in $S\cup T_+\cong D^q\times S^0$. When we do the deformation indicated, we attach a solid $q$-handle to this to form a contractible subset of the new $\Sig$ containing the new component of $\Sig_-$ (which is now homotopy equivalent to a wedge of two $q-1$ spheres, but with each sphere filled in with a $q$-disk $T_+ \subset \Sig_+$) we also extend the coefficient map by using the values of $\psi$ on the old 1-simplex in $\Sig_-$ at the bottom of the figure. This old 1-simplex is now at the bottom of a 1-lens and this can be cancelled by the deformation obtained by rotating the lens since the top of lens and the bottom of the lens have matching coefficient maps. The result is that the set $\Sig_-$ is changed by the deletion of the 1-simplex. In more standard language, this last step performs surgery on a circle $S^1$ embedded in $\Sig$ so that half the circle is in $\Sig_+$ and half is in $\Sig_-$. This second half is the 1-simplex which has been ``eliminated.''

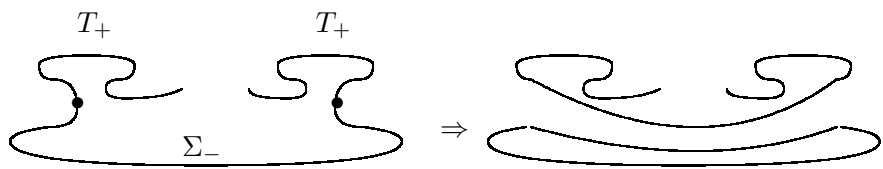
\begin{figure}[htbp]
\begin{center}
%
{
\setlength{\unitlength}{.5in}
{\mbox{
\begin{picture}(9,1.5)
\put(0,0){ 
 \put(0.9,1.3){$T_+$}
  \thinlines
      \put(2,0.3){
      \qbezier(-1,.75)(-1.5,.75)(-1.5,.65)
      \qbezier(-1.35,.5)(-1.5,.5)(-1.5,.65)
       \qbezier(-1.35,.5)(-1.1,.5)(-1.1,.2)
    \put(-1.17,.18){$\bullet$} 
  \qbezier(-1.39,0)(-1.1,0)(-1.1,.2)      
      }
      \put(2,0){       \put(0,0){$\Sig_-$}
      \qbezier(-1.4,.3)(-1.8,.25)(-1.8,.15)}
 \put(0,.3){     \qbezier(1,.75)(1.5,.75)(1.5,.65)
      \qbezier(1.35,.5)(1.5,.5)(1.5,.65)
      \qbezier(1.35,.5)(1.2,.5)(1.2,.4)      
      \qbezier(1.4,.3)(1.8,.3)(2,.4)
      \qbezier(1.4,.3)(1.2,.3)(1.2,.4)      
       \qbezier(1,.75)(1.5,.75)(1.5,.65)   }
       \qbezier(.2,.15)(.2,-.1)(2.25,-.1)  
       \qbezier(4.3,.15)(4.3,-.1)(2.25,-.1)  
   \put(2.5,0){ 
\thinlines
 \put(0.9,1.3){$T_+$}
      \put(2,.3){
      \qbezier(-1,.75)(-1.5,.75)(-1.5,.65)
      \qbezier(-1.35,.5)(-1.5,.5)(-1.5,.65)
      \qbezier(-1.35,.5)(-1.2,.5)(-1.2,.4)      
       \qbezier(-1.4,.3)(-1.8,.3)(-1.8,.4)
    \qbezier(-1.4,.3)(-1.2,.3)(-1.2,.4)   
      }
\put(0,.3){      \qbezier(1.35,.5)(1.5,.5)(1.5,.65)
  \qbezier(1.35,.5)(1.1,.5)(1.1,.2)
    \put(1.05,.18){$\bullet$} 
  \qbezier(1.39,0)(1.1,0)(1.1,.2)
       \qbezier(1,.75)(1.5,.75)(1.5,.65)     
      }
      \qbezier(1.4,.3)(1.8,.25)(1.8,.15)
  } 
  } 
  \put(4.6,0.2){ $\then$}
  \put(5,0){ 
  \thinlines
      \put(2,0.3){
      \qbezier(-1,.75)(-1.5,.75)(-1.5,.65)
      \qbezier(-1.35,.5)(-1.5,.5)(-1.5,.65)
      }
      \put(2,0){       \qbezier(-1.4,.3)(-1.8,.25)(-1.8,.15)}
 \put(0,.3){     \qbezier(1,.75)(1.5,.75)(1.5,.65)
      \qbezier(1.35,.5)(1.5,.5)(1.5,.65)
      \qbezier(1.35,.5)(1.2,.5)(1.2,.4)      
      \qbezier(1.4,.3)(1.8,.3)(2,.4)
      \qbezier(1.4,.3)(1.2,.3)(1.2,.4)      
       \qbezier(1,.75)(1.5,.75)(1.5,.65)   }
       \qbezier(.2,.15)(.2,-.1)(2.25,-.1)  
       \qbezier(4.3,.15)(4.3,-.1)(2.25,-.1)  
   \put(2.5,0){ 
\thinlines
      \put(2,.3){
      \qbezier(-1,.75)(-1.5,.75)(-1.5,.65)
      \qbezier(-1.35,.5)(-1.5,.5)(-1.5,.65)
      \qbezier(-1.35,.5)(-1.2,.5)(-1.2,.4)      
       \qbezier(-1.4,.3)(-1.8,.3)(-1.8,.4)
     \qbezier(-1.4,.3)(-1.2,.3)(-1.2,.4)      
      }
\put(0,.3){      \qbezier(1.35,.5)(1.5,.5)(1.5,.65)
 \qbezier(1.35,.5)(0,-.5)(-1.85,.5)    
 \qbezier(1.35,-0)(0,-.5)(-1.85,-0)    
       \qbezier(1,.75)(1.5,.75)(1.5,.65)     
      }
      \qbezier(1.4,.3)(1.8,.25)(1.8,.15)
  } 
  }
\end{picture}}
}}
\caption{Plant two new mushrooms and cancel the two points indicated with spots.}
\label{fig: deleting an edge}
\end{center}
\end{figure}

($m\ge2$) {Suppose by induction that the $m-1$ skeleton of $\Sig_-$ has been removed where $m\ge2$. Let $D^m$ be what remains of one of the original $m$-simplices of $\Sig_-$. Then $D^m$ has boundary $S^{m-1}\subseteq\Sig_0$. Part of this boundary comes from the original boundary of $\Sig_-$ and the other part comes from the inductive procedure. There are remnants of mushrooms from previous steps in the construction and we need to avoid them and use only those structures which exist in all parts of the boundary of $D^m\subset\Sig_-$.

Since $\Sig$ is $q$ dimensional, this disk sits in $D^m\times D^{q-m}\subset \Sig_-$. The next step in the deformation is given by planting the product of $S^{m-1}$ with a mushroom of dimension $q-m+1$. The picture is the same as Figure \ref{fig: deleting an edge}. So, we do not redraw it. However, we give a new interpretation of the same figure.}

Take the left hand figure in Figure \ref{fig: deleting an edge}. This is a planar figure which is now being spun around the middle vertical axis over all $\th\in S^{m-1}$. The tops of the mushrooms, which are given locally by $t=0$ in Example \ref{eg:mushroom}, become diffeomorphic to $S^{m-1}\times I$. For all $z\in D^{q-m}$, we replace these mushrooms with the $t=||z||$ picture from Example \ref{eg:mushroom} and spin around $S^{m-1}$. This gives a stratified set over $D^m\times D^{q-m}$ which contains a new components $S\subset \Sig_-$ diffeomorphic to $S^{m-1}\times I\times S^{q-m}$. However, with the tops of the mushrooms we get $S\cup T_+\cong S^{m-1}\times D^{q-m+1}$. The deformation (passing from left to right) in Figure \ref{fig: deleting an edge} is to be carried out only for $z$ close to the origin in $D^{q-m}$ otherwise the points indicated with spots are not in the picture and, again, we use the value of the coefficient map on the $m$-simplex in $\Sig_-$ to extend the value of $\psi$ to the top of the new $m$-lens that we have formed. This deformation performs surgery on a $m-1$ sphere in $\Sig$ which lies in $\Sig_0$ on the boundary of $S\cup T_+$. This changes $S\cup T_+$ into a $q$-disk. So, the new component of $\Sig_-$ is contained in a contractible subset of $\Sig$.

On the right hand side of Figure \ref{fig: deleting an edge} we have an $m$-lens which can be eliminated by Example \ref{eg:k-lens} since the value of $\psi$ on top and bottom match by construction. This performs surgery on an $m$-sphere in $\Sig$ which meets $\Sig_-$ is an $m$-disk which is the remains of the $m$-simplex which we are trying to eliminate. This deformation therefore completes the induction and proves the lemma.
\end{proof}

This completes the proof of all the theorems in this paper.

%
%



 %
 %

\bibliographystyle{amsplain}

 %
 %

\end{document}